\documentclass[a4paper,11pt,reqno]{amsart}

\usepackage[
    left=1in,
    right=1in,
    top=1in,
    bottom=1in,
    marginparwidth=0.75in,
    marginparsep=0.1in
]{geometry}

\usepackage{mathpazo}  

\usepackage{amsmath, amsthm, amssymb, mathtools}
\usepackage{mathrsfs}
\usepackage{bm}
\usepackage{empheq}

\numberwithin{equation}{section}
\allowdisplaybreaks

\usepackage{xcolor}

\definecolor{mycite}{RGB}{197,25,210}   
\definecolor{mylink}{RGB}{8,119,127}    
\definecolor{myurl}{RGB}{225,119,1}     

\usepackage[
    colorlinks,
    citecolor=mycite,
    linkcolor=mylink,
    urlcolor=myurl,
 allcolors=black,
    backref=page
]{hyperref}

\usepackage{aliascnt}

\usepackage[
final
]{showkeys}

\DeclarePairedDelimiterX{\tnorm}[1]
{\lVert\mkern-2mu|}{\rVert\mkern-2mu|}{#1}

\newcommand{\iu}{\mathrm{i}}
\newcommand{\cas}{\mathfrak{c}\mathfrak{a}\mathfrak{s}}

\usepackage{graphicx}
\usepackage{subcaption}

\usepackage{tikz-cd}
\usetikzlibrary{positioning, calc}

\usepackage{float}
\usepackage{booktabs}

\usepackage{enumitem}
\usepackage{lineno}
\usepackage{calc}

\usepackage{listings}

\theoremstyle{definition}

\newtheorem{theorem}{Theorem}[section]

\newaliascnt{definition}{theorem}
\newtheorem{definition}[definition]{Definition}
\aliascntresetthe{definition}

\newaliascnt{proposition}{theorem}
\newtheorem{proposition}[proposition]{Proposition}
\aliascntresetthe{proposition}

\newaliascnt{lemma}{theorem}
\newtheorem{lemma}[lemma]{Lemma}
\aliascntresetthe{lemma}

\newaliascnt{corollary}{theorem}
\newtheorem{corollary}[corollary]{Corollary}
\aliascntresetthe{corollary}

\newaliascnt{example}{theorem}

\aliascntresetthe{example}

\newaliascnt{remark}{theorem}
\newtheorem{remark}[remark]{Remark}
\aliascntresetthe{remark}

\begin{document}

\title[LDP for cubic NLS: {\em subcritical case}]
{
Large Deviations for the Nonlinear Schr\"odinger Equation with Randomized Quasi-Periodic Initial Data in Higher Dimensions
: {\em Subcritical Case} 
}

\author{Fei Xu}
\address{\scriptsize (F. Xu)~Institute of Mathematics, Jilin University, Changchun 130012, P.R. China.}
\email{\color{magenta}stuxuf@outlook.com}
\thanks{The first author (F.~Xu) was supported in part by 
the National Natural Science Foundation of China (Grant No.~12501235), 
the China Postdoctoral Science Foundation (Grant Nos.~BX20240138 and 2025M783104), 
and the Jilin Provincial Postdoctoral Science Foundation (2025). 
He would like to thank David Damanik (Rice University) for sharing several papers on {\em Wave Turbulence Theory}, Gigliola Staffilani (MIT) for discussions on {\em higher-order normal form}, fundamental insights into the nature of the {\em criticality} (onset of the first nonlinear effect), and valuable comments on earlier versions of the paper, Yvain Bruned (Universit\'e de Lorraine), Yuzhao Wang (University of Birmingham) and Xinyu Zhao (New Jersey Institute of Technology) for related discussions.
}

\author{Yong Li}
\address{\scriptsize  (Y. Li)~Institute of Mathematics, Jilin University, Changchun 130012, P.R. China. 
School of Mathematics and Statistics, Center for Mathematics and Interdisciplinary Sciences, Northeast Normal University, Changchun, Jilin 130024, P.R. China.}
\email{\color{magenta}liyong@jlu.edu.cn}
\thanks{The second author (Y.~Li) is the corresponding author. He was supported in part by the National Basic Research Program of China (Grant No. 2013CB834100), and the National Natural Science Foundation of China (Grant Nos. 12471183 and 12531009).}

\date{\today}

\subjclass[2020]{Primary 35R60, 35B15, 35Q55; Secondary 60F10, 35B40}

\keywords{Randomized Quasi-Periodic Data Cauchy Problem; Nonlinear Schr\"odinger Equations in Higher Dimensions; Large Deviations Principle}

\maketitle

\begin{abstract}

We study the cubic weakly nonlinear Schr\"odinger equation with randomized spatially quasi-periodic initial data in higher dimensions. Under a polynomial decay assumption in Fourier space, we establish a \emph{Large Deviations Principle} for rogue waves in the so-called \emph{subcritical} time regime. 

The proof proceeds in two main steps. We first characterize the distribution of the linear solution and establish the corresponding linear large deviations principle. The lower bound is obtained via pointwise estimates, while the upper bound follows from a combination of truncation and probabilistic arguments.  {The method used in this step appears to be {\em new}; compare with \cite{GGKS23}.} We then perform a detailed combinatorial analysis of the Picard iteration, deriving an effective bound for the Duhamel term and thereby establishing the nonlinear large deviations principle.

\end{abstract}

\tableofcontents

\section{Introduction}

\subsection{Random Data Theory and Rogue Waves as Extreme Events}

The probabilistic study of dispersive partial differential equations originates from the seminal works of Lebowitz--Rose--Speer and Bourgain on invariant measures for the nonlinear Schr\"odinger equation; see \cite{LRS88,Bourgain94,Bourgain96}. These developments initiated a systematic investigation of {\em low-regularity dynamics through randomization} of either forcing terms or initial data.

From a structural point of view, such problems naturally split into two classes. The first concerns stochastic partial differential equations (SPDEs), where randomness enters through singular forcing and leads to dynamics such as the KPZ equation \cite{H13AoM} and stochastic quantization models in Euclidean quantum field theory \cite{GIP15}. The second class consists of deterministic dispersive equations with random initial data, which provide a probabilistic framework for studying low-regularity behavior and are closely connected to wave turbulence theory \cite{Zakharov,BGHS21}. In many singular settings, a common feature is that the underlying analysis requires {\em renormalization} to handle divergent nonlinear interactions. 

In the past decade, there has been remarkable progress in the analysis of singular stochastic {\em parabolic} equations. The development of the theory of {\em regularity structures} by Hairer \cite{H14,BHZ19} and {\em paracontrolled calculus} by Gubinelli–Imkeller–Perkowski \cite{GIP15} has led to the local well-posedness theory, essentially completing the picture in the so-called subcritical regime; see \cite{DNY22Inventiones} for further introduction. 

More recently, the emerging theories of {\em random tensors} \cite{DNY22Inventiones} and {\em random averaging operators} \cite{DNY24AoM}, developed by Deng–Nahmod–Yue, can be interpreted as {\em dispersive} counterparts to the aforementioned parabolic frameworks. These theories provide a systematic frameworks to the renormalization of oscillatory multilinear interactions; see also the corresponding {\em hyperbolic} counterpart \cite{BDNY24Inventiones}.

Random data theory has played a fundamental role in the study of the long-time behaviour of solutions to evolution partial differential equations. One of the most active directions in this field is the derivation of the {\em Wave Kinetic Equation} (WKE), which connects the statistical behaviour of Fourier coefficients of solutions to an effective kinetic description. Significant recent advances in this direction can be found in \cite{DH22,DH23Inventiones,DH23,DHM24,DIP25}, together with the references therein.

Another interesting direction related to random data theory concerns the so-called {\em rogue waves}, a term used by oceanographers to describe {\em isolated} and {\em large-amplitude} waves, typically defined as those whose height exceeds twice the {significant wave height} of the ambient sea state. Empirically, such events occur more frequently than predicted by Gaussian statistics; see \cite{PR13,DGVE,GGKS23}.

To explain this phenomenon, several mechanisms for rogue wave formation have been proposed, although no unified theory is currently available. {\em Linear superposition} and {\em nonlinear focusing} are two fundamental mechanisms widely studied in the literature. The former is a constructive interference phenomenon: the phases of many weakly interacting waves align at a certain point in space, producing a large-amplitude peak. The latter suggests that rogue waves may emerge from a significant exchange of energy between waves of different wavenumbers, leading to a substantial amplification of one component; see \cite{G25}. In this setting, rogue waves are often viewed as limiting cases of {\em breather solutions} and may also arise from {\em soliton collisions}; see \cite{PR13}. These represent {\em non-generic} behaviours and may be regarded as {\em rare phenomena}, or more generally as {\em extreme events} from a probabilistic perspective; see \cite{GGKS23}. 
 
{The probabilistic perspective suggests that} rogue waves provide a canonical, yet still poorly understood, example of extreme events. A central problem is therefore to {\em quantify deviations of the wave-height distribution from Gaussianity}, and thereby to estimate the likelihood of rogue wave occurrence. This remains a {\em long-standing problem} with significant implications for ships and naval structures; see \cite{DGVE}.

In \cite{DGVE}, Dematteis, Grafke and Vanden-Eijnden observed that {\em rogue waves obey a large deviations principle}. This phenomenon was later rigorously established by Garrido et al. for the cubic weakly nonlinear Schr\"odinger (NLS) equation 
\begin{align}\label{eq:snls}
\mathrm{i}\partial_t u + \partial_{xx} u + {\varepsilon^{\alpha}} |u|^{2} u = 0,\quad \alpha>1, 
\end{align}
with randomized periodic initial data of the form
\[
u(0,x) = \sum_{n \in \mathbb{Z}} c(n)\, g_{n}\, e^{\mathrm{i} n x}.
\]
{Their result holds over time scales $t \sim \varepsilon^{-\beta}$ for both subcritical and critical 
cases, that is, $\alpha - 1 > \beta$ and $\alpha - 1 = \beta > 0$ respectively. 
}
Their analysis relies on an exponential decay assumption on the deterministic Fourier components of the random initial data, namely $c(n)=a e^{-b|n|}$ or $c(n)=a e^{-b|n|^2}$ for all $n \in \mathbb{Z}$, where $a,b>0$ are fixed. This assumption plays a crucial role in their argument; see \cite{GGKS23} for further details. Recently, this condition has been relaxed. In \cite{LW25}, Liang and Wang extended the result to $\ell^1(\mathbb{Z})$ decay, while Fan and Ye \cite{FL25} further improved it to $c_n = \langle n \rangle^{-\frac{1}{2}+}$.

In contrast to standard NLS \eqref{eq:snls}, Grande studied the following beating NLS
\begin{align}
{\iu}\partial_t u + \partial_{xx} u = 2\cos(2x)\,|u|^2 u,
\end{align}
with randomized periodic initial data supported on two Fourier modes of the form
\[
u(0,x) = \varepsilon \bigl(\alpha e^{\iu x} + \beta e^{-\iu x}\bigr),
\]
where $\alpha$ and $\beta$ are complex-valued independent Gaussian random variables with zero mean and variances
$\sigma_\alpha^2 = \mathbb{E}\bigl[|\alpha|^2\bigr]$ and $\sigma_\beta^2 = \mathbb{E}\bigl[|\beta|^2\bigr]$. When $\sigma^2_\alpha \neq \sigma^2_\beta$, he showed that resonant energy exchange between Fourier modes leads to a {\em fattening} of the tails of the probability distribution of the sup-norm of the solution, thereby increasing the likelihood of rogue wave formation. Consequently, a large deviations principle holds; see \cite{G25}. 

The above discussion has been confined to the spatially periodic setting in the study of rogue waves. As noted by Wilkening and Zhao \cite{WZ21JNS}, rogue waves are also intimately connected with {\em spatially quasi-periodic dynamics} arising from {\em modulational instability} (also known as {\em Benjamin--Feir instability}). Thus, {\em extending the study of rogue waves to the spatially quasi-periodic setting under randomization} remains a natural and largely unexplored direction. In addition, the deterministic Cauchy problem with spatially quasi-periodic initial data forms an important and active area in nonlinear partial differential equations; see, for example, the works of Deift \cite{Deift07,Deift17} and subsequent works citing them.

\subsection{Randomized Quasi-Periodic Fourier Series on $\mathbb{R}^d$ and Initial Data}\label{rqp}
Before formulating the problem precisely, we first introduce in this subsection the randomization of spatially quasi-periodic Fourier series on $\mathbb R^d$.

A function $f:\mathbb{R}^d \to \mathbb{C}$ is said to be {\em quasi-periodic} if it is quasi-periodic in each spatial direction $x_j$, with associated rationally independent frequency vector $\omega_j \in \mathbb{R}^{\nu_j}$ for $j=1,\dots,d$, that is, 
\begin{align}\label{eq:rationallyindependent}
\langle n_j,\omega_j \rangle= 0 \quad \Longrightarrow \quad n_j = 0 \in \mathbb{Z}^{\nu_j}.
\end{align}

Let $\nu := \sum_{j=1}^d \nu_j$ denote the total number of frequencies. We define the {\em frequency matrix} $\boldsymbol{\Omega} \in \mathbb{R}^{\nu \times d}$ (with $\nu > d$) by 
\begin{align}\label{eq:frequencymatrix}
\boldsymbol{\Omega}
= \mathrm{diag}\bigl(\omega_1^{\top}, \dots, \omega_d^{\top}\bigr)
= \begin{pmatrix}
\omega_1^{\top} & & \\
& \ddots & \\
& & \omega_d^{\top}
\end{pmatrix}.
\end{align}
We say that $\boldsymbol{\Omega}$ is {{\em non-resonant} or} \emph{rationally independent} if \eqref{eq:rationallyindependent} holds for each direction. Equivalently, in a more compact form,
\[
\boldsymbol{n}\boldsymbol{\Omega} = \boldsymbol{0}
\quad \Longrightarrow \quad 
\boldsymbol{n} = \boldsymbol{0} \in \mathbb{Z}^{\nu},
\]
where $\boldsymbol{n} = (n_{1},\dots,n_d) \in \mathbb{Z}^{\nu_1} \times \cdots \times \mathbb{Z}^{\nu_d} \simeq \mathbb{Z}^{\nu}$.

The block-diagonal structure of $\boldsymbol{\Omega}$ reflects the independence of the frequency components across different spatial directions. In particular, initial data \eqref{eq:initialdata} is quasi-periodic in each spatial direction $x_j$ with frequency vector $\omega_j$. Therefore, $f$ admits the {\em quasi-periodic Fourier expansion}
\begin{equation}\label{eq:fourierseries}
    f(\boldsymbol{x}) = \sum_{\boldsymbol{n} \in \mathbb{Z}^\nu} \hat{f}(\boldsymbol{n}) \, e^{\mathrm{i}\langle \boldsymbol{n}\boldsymbol{\Omega}, \boldsymbol{x} \rangle}, \quad \boldsymbol{x} \in \mathbb{R}^d,
\end{equation}
where the Fourier coefficients $\hat{f}(\boldsymbol{n})$ are determined by the spatial average
\begin{equation}\label{eq:fouriercoefficient}
    \hat{f}(\boldsymbol{n}) 
    = \lim_{L \to \infty} \frac{1}{(2L)^d} 
    \int_{[-L,L]^d} f(\boldsymbol{x}) \, e^{-\mathrm{i}\langle \boldsymbol{n}\boldsymbol{\Omega}, \boldsymbol{x} \rangle} \, \mathrm{d}\boldsymbol{x}.
\end{equation}

By introducing the angular variables $\boldsymbol{y} = \boldsymbol{x}\boldsymbol{\Omega}^\top \pmod{2\pi}$ on the torus $\mathbb{T}^\nu$, we define the {\em generating function} $F$ as the following Fourier expansion on
\begin{equation*}
    F(\boldsymbol{y}) = \sum_{\boldsymbol{n} \in \mathbb{Z}^\nu} \hat{f}(\boldsymbol{n}) e^{\mathrm{i} \langle \boldsymbol{n}, \boldsymbol{y}\rangle}, \quad \boldsymbol{y} \in \mathbb{T}^\nu,
\end{equation*}
such that $f(\boldsymbol{x}) = F(\boldsymbol{x}\boldsymbol{\Omega}^\top)$.

By the {\em Birkhoff Ergodic Theorem}, the rational independence of frequency matrix $\boldsymbol{\Omega}$ ensures that spatial average in \eqref{eq:fouriercoefficient} coincides with the {\em Haar integral} over the torus, 
that is,
\begin{equation*}
    \hat{f}(\boldsymbol{n}) = \frac{1}{(2\pi)^\nu} \int_{\mathbb{T}^\nu} F(\boldsymbol{y}) e^{-\mathrm{i} \langle \boldsymbol{n}, \boldsymbol{y} \rangle} \,\mathrm{d}\boldsymbol{y}.
\end{equation*}

In this paper, we consider the {\em randomization} of Fourier series \eqref{eq:fourierseries} by prescribing the coefficients as
\begin{equation*}
    \hat{f}(\boldsymbol{n}) = c(\boldsymbol{n}) g_{\boldsymbol{n}}, \quad \boldsymbol{n} \in \mathbb{Z}^\nu.
\end{equation*}
\begin{itemize}
\item Here $\{c(\boldsymbol{n})\}_{\boldsymbol{n}\in\mathbb{Z}^\nu}$ is a deterministic sequence satisfying the following polynomial decay condition: 
\begin{align}\label{eq:decay}
    |c(\boldsymbol{n})| \le \tnorm{\boldsymbol{n}}_{-\rho}
    = \prod_{j=1}^{d}\prod_{j'=1}^{\nu_j} (1+|n_{j,j'}|)^{-\rho_{j,j'}},
    \end{align}
    where
    \[
    \boldsymbol{n} = (n_{1,1},\cdots,n_{1,\nu_1};\cdots;n_{d,1},\cdots,n_{d,\nu_d}) \in \mathbb{Z}^{\nu_1} \times \cdots \times \mathbb{Z}^{\nu_d},
    \]
    and
    \[
    \rho = (\rho_{1,1},\cdots,\rho_{1,\nu_1};\cdots;\rho_{d,1},\cdots,\rho_{d,\nu_d}) \quad \text{with } \rho_{j,j'} > 0 ~~\text{large enough for all indices}.
    \]
\item The random component $\{g_{\boldsymbol{n}}\}_{\boldsymbol{n} \in \mathbb{Z}^\nu}$ constitutes a family of independent and identically distributed (i.i.d. for short) standard complex Gaussian random variables satisfying the following conditions:
\begin{equation*}
    \mathbb{E}[g_{\boldsymbol{n}}] = 0, \qquad \mathbb{E}[g_{\boldsymbol{n}}g_{\boldsymbol{m}}] = 0, \qquad\mathbb{E}[g_{\boldsymbol{n}} \overline{g_{\boldsymbol{m}}}] = \delta_{\boldsymbol{n},\boldsymbol{m}},
\end{equation*}
where $\delta_{\boldsymbol{n},\boldsymbol{m}}$ denotes the Kronecker delta. Equivalently,  $\operatorname{Re}\, g_{\boldsymbol{n}}$ and $\operatorname{Im}\, g_{\boldsymbol{n}}$ are independent $\mathcal{N}_{\mathbb{R}}(0,\tfrac{1}{2})$ random variables; see \autoref{sec:complexgaussian} for further details. 
\end{itemize}

This construction defines a Gaussian random field $F$ on the torus $\mathbb{T}^\nu$, which induces a randomized quasi-periodic field $f$ on $\mathbb{R}^d$, that is, 
\begin{equation}\label{eq:randomqp_d}
    f(\boldsymbol{x}) = \sum_{\boldsymbol{n} \in \mathbb{Z}^\nu} c(\boldsymbol{n}) g_{\boldsymbol{n}} e^{\mathrm{i} \langle \boldsymbol{n}\boldsymbol{\Omega}, \boldsymbol{x} \rangle}.
\end{equation}
\begin{proposition}[Expectation and Variance]\label{thm:expectation-variance}
Let randomized field $f(\boldsymbol{x})$ be defined as in \eqref{eq:randomqp_d}. Then the following properties hold:
\begin{enumerate}
\item[\rm(i)] The field is centered: $\mathbb{E}[f(\boldsymbol{x})] = 0$ for all $\boldsymbol{x} \in \mathbb{R}^d$.
\item[\rm(ii)] If the decay rates satisfy ${\rho_{j,j'} > 1/2}$ for each $j=1,\dots,d$ and $j'=1,\dots,\nu_j$, then the second moment is uniformly bounded: $\mathbb{V}[f(\boldsymbol{x})] < \infty$.
\end{enumerate}
\end{proposition}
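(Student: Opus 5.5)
The plan is to compute both moments termwise from the series \eqref{eq:randomqp_d}. The only real difficulty is justifying the interchange of expectation and the infinite sum. For this I will work first with the partial sums
\[
f_N(\boldsymbol{x}) = \sum_{|\boldsymbol{n}|_\infty\le N} c(\boldsymbol{n}) g_{\boldsymbol{n}} e^{\mathrm{i}\langle \boldsymbol{n}\boldsymbol{\Omega},\boldsymbol{x}\rangle}.
\]
I will then show that $f_N(\boldsymbol{x})\to f(\boldsymbol{x})$ in $L^2(\mathbb{P})$ under the hypothesis $\rho_{j,j'}>1/2$. Once this is done, the moments of $f(\boldsymbol{x})$ are the limits of those of $f_N(\boldsymbol{x})$, since $L^2$ convergence implies convergence in $L^1$ and convergence of second moments.

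For (ii), I will use the orthogonality relations $\mathbb{E}[g_{\boldsymbol{n}}\overline{g_{\boldsymbol{m}}}]=\delta_{\boldsymbol{n},\boldsymbol{m}}$. For $M<N$, the phases $e^{\mathrm{i}\langle(\boldsymbol{n}-\boldsymbol{m})\boldsymbol{\Omega},\boldsymbol{x}\rangle}$ survive only on the diagonal, so
\[
\mathbb{E}|f_N(\boldsymbol{x})-f_M(\boldsymbol{x})|^2=\sum_{M<|\boldsymbol{n}|_\infty\le N}|c(\boldsymbol{n})|^2 .
\]
This quantity is independent of $\boldsymbol{x}$. By \eqref{eq:decay}, the full sum factorizes over the coordinates:
\[
\sum_{\boldsymbol{n}\in\mathbb{Z}^\nu}|c(\boldsymbol{n})|^2\le\prod_{j=1}^d\prod_{j'=1}^{\nu_j}\sum_{k\in\mathbb{Z}}(1+|k|)^{-2\rho_{j,j'}}.
\]
Each one-dimensional factor is finite exactly because $2\rho_{j,j'}>1$. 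Hence the full series converges, its tails go to zero, $(f_N(\boldsymbol{x}))$ is Cauchy in $L^2(\mathbb{P})$, and the limit defines $f(\boldsymbol{x})$.

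Part (i) is then immediate. Each $f_N(\boldsymbol{x})$ has mean zero by linearity, since $\mathbb{E}g_{\boldsymbol{n}}=0$, and $L^1$ convergence passes this to $f(\boldsymbol{x})$.

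For the variance, $\mathbb{V}[f(\boldsymbol{x})]=\mathbb{E}|f(\boldsymbol{x})|^2$ because the mean is zero. This equals $\lim_N\mathbb{E}|f_N(\boldsymbol{x})|^2=\sum_{\boldsymbol{n}}|c(\boldsymbol{n})|^2$, which is finite and bounded uniformly in $\boldsymbol{x}\in\mathbb{R}^d$ by the product above. The main obstacle is the convergence and interchange step. The product factorization of $\tnorm{\boldsymbol{n}}_{-\rho}$ reduces it to one-dimensional $p$-series, so it should pose no real difficulty.
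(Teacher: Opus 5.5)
Your proposal is correct and follows the paper's argument. The mean vanishes by linearity and $\mathbb{E}[g_{\boldsymbol{n}}]=0$; orthogonality $\mathbb{E}[g_{\boldsymbol{n}}\overline{g_{\boldsymbol{m}}}]=\delta_{\boldsymbol{n},\boldsymbol{m}}$ reduces $\mathbb{E}|f(\boldsymbol{x})|^2$ to $\sum_{\boldsymbol{n}}|c(\boldsymbol{n})|^2$; and \eqref{eq:decay} factorizes this into one-dimensional series that converge because $2\rho_{j,j'}>1$. The only addition is that you justify exchanging $\mathbb{E}$ with the infinite sum via $L^2(\mathbb{P})$-Cauchy partial sums, a step the paper performs only formally.
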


\begin{proof}
\textbf{(i)} By linearity of the expectation operator,
\begin{align*}
    \mathbb{E}[f(\boldsymbol{x})] = \sum_{\boldsymbol{n} \in \mathbb{Z}^\nu} c(\boldsymbol{n}) e^{\mathrm{i} \langle \boldsymbol{n}\boldsymbol{\Omega}, \boldsymbol{x} \rangle} \mathbb{E}[g_{\boldsymbol{n}}].
\end{align*}
Since $\{g_{\boldsymbol{n}}\}$ are centered, that is, $\mathbb{E}[g_{\boldsymbol{n}}] = 0$ for all $\boldsymbol{n} \in \mathbb{Z}^\nu$, it follows that $\mathbb{E}[f(\boldsymbol{x})] = 0$.

\vspace{0.5em}
\textbf{(ii)} For the second moment, we use $|z|^2 = z \overline{z}$. By independence and orthogonality, we obtain
\begin{align*}
    \mathbb{E}\left[|f(\boldsymbol{x})|^2\right]  &= \mathbb{E} \left[ \sum_{\boldsymbol{n},\boldsymbol{m} \in \mathbb{Z}^\nu} c(\boldsymbol{n}) \overline{c(\boldsymbol{m})} g_{\boldsymbol{n}} \overline{g_{\boldsymbol{m}}} e^{\mathrm{i} \langle (\boldsymbol{n}-\boldsymbol{m})\boldsymbol{\Omega}, \boldsymbol{x} \rangle} \right] \nonumber \\
    &= \sum_{\boldsymbol{n},\boldsymbol{m} \in \mathbb{Z}^\nu} c(\boldsymbol{n}) \overline{c(\boldsymbol{m})} e^{\mathrm{i} \langle (\boldsymbol{n}-\boldsymbol{m})\boldsymbol{\Omega}, \boldsymbol{x} \rangle} \delta_{\boldsymbol{n},\boldsymbol{m}} \nonumber \\
    &= \sum_{\boldsymbol{n} \in \mathbb{Z}^\nu} |c(\boldsymbol{n})|^2. \label{eq:parseval_iso}
\end{align*}
Substituting polynomial decay assumption \eqref{eq:decay} yields
\begin{align*}
\mathbb{E}\left[|f(\boldsymbol{x})|^2\right] 
&= \sum_{\boldsymbol{n} \in \mathbb{Z}^\nu} |c(\boldsymbol{n})|^2 \\
&\le \sum_{\substack{n_{j,j'} \in \mathbb{Z} \\ j=1,\dots,d \\ j' = 1,\dots,\nu_j}} 
   \prod_{j=1}^d \prod_{j'=1}^{\nu_j} \left(1+|n_{j,j'}|\right)^{-2\rho_{j,j'}} \\
&= \prod_{j=1}^d \prod_{j'=1}^{\nu_j} \sum_{n_{j,j'} \in \mathbb{Z}} \left(1+|n_{j,j'}|\right)^{-2\rho_{j,j'}}.
\end{align*}
Each one-dimensional summation converges provided $\rho_{j,j'} > 1/2$. Consequently, the product is finite, ensuring that the second moment (and thus the variance) is finite and uniform with respect to $\boldsymbol{x}$.
\end{proof}

\begin{proposition}[Tail bound for $g_{\boldsymbol{n}}$]\label{thm:poly}
Let $0 < \delta \ll 1$, and let
\[
\kappa=(\kappa_{1,1},\dots,\kappa_{1,\nu_1};\dots;\kappa_{d,1},\dots,\kappa_{d,\nu_d}),
\qquad \kappa_{j,j'}>0~~\text{for all}~~j=1,\cdots,d~~\text{and}~~ 1\le j^\prime\leq\nu_j.
\]
Define the local sets
\[
\Omega_{\delta,\boldsymbol{n}} =
\Bigl\{ \omega \in \Omega :
|g_{\boldsymbol{n}}(\omega)|
>
{\delta^{-\frac{1}{2}}}\tnorm{\boldsymbol{n}}_\kappa
\Bigr\},
\]
where
\[
\tnorm{\boldsymbol{n}}_\kappa
= \prod_{j=1}^d \prod_{j'=1}^{\nu_j} (1 + |n_{j,j'}|)^{\kappa_{j,j'}},
\]
and the global set
\[
\Omega_\delta = \bigcup_{\boldsymbol{n} \in \mathbb{Z}^\nu} \Omega_{\delta,\boldsymbol{n}}.
\]
Then we have
\[
\mathbb{P}(\Omega_\delta) =\mathcal O( e^{-\delta^{-1}}).
\]
\end{proposition}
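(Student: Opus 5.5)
The plan is a union bound combined with the explicit tail of a standard complex Gaussian. Since $\operatorname{Re} g_{\boldsymbol{n}}$ and $\operatorname{Im} g_{\boldsymbol{n}}$ are independent $\mathcal N_{\mathbb R}(0,\tfrac12)$ variables, $|g_{\boldsymbol{n}}|^2$ is exponentially distributed with mean $1$. Hence, for every $\lambda\ge 0$,
\[
\mathbb P(|g_{\boldsymbol{n}}|>\lambda)=e^{-\lambda^2}.
\]
Taking $\lambda=\delta^{-1/2}\tnorm{\boldsymbol{n}}_\kappa$ gives
\[
\mathbb P(\Omega_{\delta,\boldsymbol{n}})=\exp\bigl(-\delta^{-1}\tnorm{\boldsymbol{n}}_\kappa^{2}\bigr),
\]
and therefore
\[
\mathbb P(\Omega_\delta)\le \sum_{\boldsymbol{n}\in\mathbb Z^\nu}\exp\bigl(-\delta^{-1}N_{\boldsymbol{n}}^2\bigr),\qquad N_{\boldsymbol{n}}:=\tnorm{\boldsymbol{n}}_\kappa\ge 1 .
\]

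Next I factor out the leading exponential. Writing $N_{\boldsymbol{n}}^2=1+(N_{\boldsymbol{n}}^2-1)$ with $N_{\boldsymbol{n}}^2-1\ge0$, and using $\delta\le1$ so that $\delta^{-1}(N_{\boldsymbol{n}}^2-1)\ge N_{\boldsymbol{n}}^2-1$, I obtain
\[
\mathbb P(\Omega_\delta)\le e^{-\delta^{-1}}\sum_{\boldsymbol{n}}e^{-(N_{\boldsymbol{n}}^2-1)} = e^{1-\delta^{-1}}\sum_{\boldsymbol{n}}e^{-N_{\boldsymbol{n}}^2}.
\]
It then remains to show that $S:=\sum_{\boldsymbol{n}}e^{-N_{\boldsymbol{n}}^2}$ is finite and independent of $\delta$. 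This gives $\mathbb P(\Omega_\delta)\le eS\,e^{-\delta^{-1}}$. The bound is sharp in order, since the single term $\boldsymbol{n}=\boldsymbol{0}$ already contributes exactly $e^{-\delta^{-1}}$.

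The only point needing care is the finiteness of $S$. The weight $N_{\boldsymbol{n}}$ is a product rather than a sum, so it grows only polynomially along coordinate axes, and a naive radial count is awkward. To handle this I use $e^{-t}\le C_s t^{-s}$ for $t\ge1$, which gives
\[
e^{-N_{\boldsymbol{n}}^2}\le C_sN_{\boldsymbol{n}}^{-2s}=C_s\prod_{j=1}^d\prod_{j'=1}^{\nu_j}(1+|n_{j,j'}|)^{-2s\kappa_{j,j'}}.
\]
This bound factorizes exactly as in the proof of \autoref{thm:expectation-variance}(ii). Choosing $s$ so large that $2s\kappa_{j,j'}>1$ for all indices, possible because every $\kappa_{j,j'}>0$, makes each one-dimensional sum converge. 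Thus $S\le C_s\prod_{j,j'}\sum_{m\in\mathbb Z}(1+|m|)^{-2s\kappa_{j,j'}}<\infty$, which concludes the argument.
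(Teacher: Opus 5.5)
Your proof is correct. It opens the same way as the paper: the exact tail $\mathbb P(\Omega_{\delta,\boldsymbol n})=\exp(-\delta^{-1}\tnorm{\boldsymbol n}_\kappa^2)$, the union bound, and factoring out $e^{-\delta^{-1}}$. It then controls the remaining series by a different route.

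The paper isolates $\boldsymbol n=\boldsymbol 0$ and uses a positive gap $\tnorm{\boldsymbol n}_\kappa^2-1\ge c_0$ for $\boldsymbol n\neq\boldsymbol 0$. It splits $\mathbb Z^\nu\setminus\{\boldsymbol 0\}$ into level sets $L_t$ of $\tnorm{\boldsymbol n}_\kappa^2$ and counts $|L_t|\lesssim(c_0+t+2)^{p}$ with $p=\sum_{j,j'}\frac{1}{2\kappa_{j,j'}}$. It then sums $|L_t|e^{-\delta^{-1}(c_0+t)}$.

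You instead use $\delta\le1$ to replace $\delta^{-1}(N_{\boldsymbol n}^2-1)$ by $N_{\boldsymbol n}^2-1$. This leaves a single $\delta$-independent series. You then dominate $e^{-N_{\boldsymbol n}^2}$ by $C_sN_{\boldsymbol n}^{-2s}$, so the series factorizes into one-dimensional sums exactly as in \autoref{thm:expectation-variance}(ii).

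Your argument is shorter and needs no lattice-point counting. It also avoids the paper's Step 4 assertion $\sum_{t\ge0}(c_0+t+2)^{p}e^{-t\delta^{-1}}\lesssim\delta^{1+p}$. That assertion is not right as written, since the $t=0$ term alone equals $(c_0+2)^{p}$. It is harmless, because the series is $\mathcal O(1)$ for $\delta\le1$.

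What the gap argument buys in principle is the sharper statement $\mathbb P(\Omega_\delta)\le e^{-\delta^{-1}}\bigl(1+\mathcal O(e^{-c_0\delta^{-1}})\bigr)$, i.e.\ leading constant $1$ rather than your $eS$. Only the exponential order is used downstream, in showing $\varepsilon\log\bigl(1+\mathbb P(\Omega_\delta)/\mathbb P(\mathscr A^{(3)}_\varepsilon)\bigr)\to0$. You could also recover the refinement within your framework. For $\boldsymbol n\neq\boldsymbol 0$, set $c_0=2^{2\min_{j,j'}\kappa_{j,j'}}-1>0$ and write $\delta^{-1}(N_{\boldsymbol n}^2-1)\ge(\delta^{-1}-1)c_0+(N_{\boldsymbol n}^2-1)$.
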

\begin{proof}
\textbf{Step 1. Tail probability of $g_{\boldsymbol{n}}$.}

Since $g_{\boldsymbol{n}} \sim \mathscr{N}_{\mathbb{C}}(0,1)$, we have
$|g_{\boldsymbol{n}}|^2 \sim \mathscr E(1)$. 
Therefore,
\[
\mathbb{P}(\Omega_{\delta,\boldsymbol{n}})
=
\exp\bigl(-\delta^{-1}\tnorm{\boldsymbol{n}}_{\kappa}^2\bigr).
\]
Since $\tnorm{\boldsymbol{n}}_{\kappa} \ge 1$ and
$\tnorm{\boldsymbol{n}}_{\kappa} = 1$ if and only if $\boldsymbol{n}=0$, we have
\[
\mathbb{P}(\Omega_{\delta,\boldsymbol{0}})=e^{-\delta^{-1}}
\quad\text{and}\quad
\tnorm{\boldsymbol{n}}_{\kappa}^2 - 1 > 0 \ \text{for } \boldsymbol{n}\neq 0.
\]
Thus,
\[
\mathbb{P}(\Omega_\delta)
\le e^{-\delta^{-1}} + \sum_{\boldsymbol{n}\neq 0}
\exp\bigl(-\delta^{-1}\tnorm{\boldsymbol{n}}_{\kappa}^2\bigr),
\]
and equivalently,
\[
\mathbb{P}(\Omega_\delta)
=
e^{-\delta^{-1}}
\Biggl(
1 +
\sum_{\boldsymbol{n}\neq 0}
\exp\bigl(-\delta^{-1}(\tnorm{\boldsymbol{n}}_{\kappa}^2-1)\bigr)
\Biggr).
\]

\textbf{Step 2. Positive gap.}

Since $\tnorm{\boldsymbol{n}}_\kappa \to \infty$ as $|\boldsymbol{n}|\to\infty$
and $\tnorm{\boldsymbol{n}}_\kappa > 1$ for all $\boldsymbol{n}\neq 0$,
the quantity $\tnorm{\boldsymbol{n}}_\kappa^2 - 1$ attains a positive minimum on 
$\mathbb{Z}^\nu \setminus \{0\}$. Hence there exists $c_0>0$ such that
\[
\tnorm{\boldsymbol{n}}_\kappa^2 - 1 \ge c_0.
\]
Define the level sets
\[
L_t
=
\Bigl\{
\boldsymbol{n}\in\mathbb{Z}^\nu:
c_0+t \le \tnorm{\boldsymbol{n}}_{\kappa}^2 - 1 < c_0+t+1
\Bigr\},
\quad t\in\mathbb{N}.
\]
Then
\[
\sum_{\boldsymbol{n}\neq 0}
e^{-\delta^{-1}(\tnorm{\boldsymbol{n}}_{\kappa}^2-1)}
=
\sum_{t\ge 0}\sum_{\boldsymbol{n}\in L_t}
e^{-\delta^{-1}(\tnorm{\boldsymbol{n}}_{\kappa}^2-1)}.
\]

\textbf{Step 3. Counting argument.}

For $\boldsymbol{n}\in L_t$, we have
\[
\prod_{j,j'} (1+|n_{j,j'}|)^{2\kappa_{j,j'}}
< c_0+t+2.
\]
Taking logarithms and using positivity of all $\kappa_{j,j'}$, it follows that
each coordinate satisfies
\[
1+|n_{j,j'}|
< (c_0+t+2)^{\frac{1}{2\kappa_{j,j'}}}.
\]
Hence, for any fixed \(1 \leq j \leq d\) and \(1 \leq j' \leq \nu_j\), the corresponding \(n_{j,j'}\) satisfies
\begin{align*}
n_{j,j'} 
\leq 2\left( (c_0 + t + 2)^{\frac{1}{2\kappa_{j,j'}}} - 1 \right) + 1 
= 2(c_0 + t + 2)^{\frac{1}{2\kappa_{j,j'}}} - 1 
< 2(c_0 + t + 2)^{\frac{1}{2\kappa_{j,j'}}}.
\end{align*}
Thus
\[
|L_t|
<2^\nu
(c_0+t+2)^{\sum_{j=1}^d \sum_{j'=1}^{\nu_j} \frac{1}{2\kappa_{j,j'}}}.
\]

\textbf{Step 4. Summation and conclusion.}

We estimate
\[
\sum_{t\ge 0} |L_t| e^{-\delta^{-1}(c_0+t)}
\lesssim
e^{-c_0\delta^{-1}}
\sum_{t\ge 0} (c_0+t+2)^{\sum_{j=1}^d \sum_{j'=1}^{\nu_j} \frac{1}{2\kappa_{j,j'}}} e^{-t\delta^{-1}}.
\]
Since $e^{-t\delta^{-1}}$ decays exponentially fast,
\[
\sum_{t\ge 0} (c_0+t+2)^{\sum_{j=1}^d \sum_{j'=1}^{\nu_j} \frac{1}{2\kappa_{j,j'}}} e^{-t\delta^{-1}} \lesssim \delta^{1+\sum_{j=1}^d \sum_{j'=1}^{\nu_j} \frac{1}{2\kappa_{j,j'}}}.
\]
Therefore,
\[
\mathbb{P}(\Omega_\delta)
\lesssim e^{-\delta^{-1}}
\left(1 + \delta^{1+\sum_{j=1}^d \sum_{j'=1}^{\nu_j} \frac{1}{2\kappa_{j,j'}}}e^{-c_0\delta^{-1}}\right)
\lesssim e^{-\delta^{-1}}.
\]
\end{proof}

{
\begin{corollary}\label{cor:polynomial_decay}
Assume that the following conditions hold:
\begin{itemize}
\item \textbf{(Related to initial data in Fourier space)} 
Let $\rho$ and $\kappa$ be as in \eqref{eq:decay} and \autoref{thm:poly}, respectively. Assume that
\[
\rho_{j,j'} - \kappa_{j,j'} > 2, \quad j=1,\dots,d,\; j'=1,\dots,\nu_j.
\]

\item \textbf{(Size of initial data with respect to $\varepsilon$)} 
Let $\omega \notin \Omega_{\delta}$ with $\delta\sim\varepsilon^{1+\eta}$, where
\begin{equation}\label{eq:delta}
0 \le \eta < \min\left\{ 2\Bigl(\frac{1}{\nu}-\mu_1\Bigr)\eta_2-\mu_2,\; \eta_3\right\}. 
\end{equation}
Here $0<\mu_1 \ll 1/\nu, 0<\mu_2 \ll 1$, $\eta_2$ and $\eta_3$ are given by
\begin{align}
\eta_2 &= \sum_{j=1}^{d} \sum_{j'=1}^{\nu_j}
\bigl(\rho_{j,j'} - \kappa_{j,j'} - 1\bigr), \label{eta2}\\
\eta_3 &= \frac{2}{3}(\alpha-1)-\frac{\mu_3}{3}, 
\quad 0<\mu_3 \ll 1.
\end{align}
\end{itemize}
 Then the randomized Fourier coefficients satisfy the following almost sure polynomial decay estimates:
\begin{align}\label{eq:decayall}
\bigl|c(\boldsymbol{n}) g_{\boldsymbol{n}}(\omega)\bigr|
\le \varepsilon^{-\frac{1}{2}-\frac{\eta}{2}} \,\tnorm{\boldsymbol{n}}_{-(\rho-\kappa)},\quad\omega\notin\Omega_\delta,
\end{align}
where
\begin{align}\label{eq:rhokappa}
\rho - \kappa 
= \bigl( \rho_{1,1}-\kappa_{1,1}, \dots, \rho_{1,\nu_1}-\kappa_{1,\nu_1}; \dots; \rho_{d,1}-\kappa_{d,1}, \dots, \rho_{d,\nu_d}-\kappa_{d,\nu_d} \bigr).
\end{align}
\end{corollary}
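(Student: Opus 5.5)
The estimate \eqref{eq:decayall} is essentially the complement of the exceptional set in \autoref{thm:poly}, combined with the decay assumption \eqref{eq:decay}. I will unwind the definitions directly. The numerical conditions on $\eta$ in \eqref{eq:delta} and on $\rho-\kappa$ play no role in the pointwise bound itself. They are recorded here only because they will be needed later in the Picard iteration, where $\eta_2,\eta_3$ enter. The only thing the proof uses about $\eta$ is that $\delta\sim\varepsilon^{1+\eta}$ with $\eta\ge 0$, so that $\delta$ is small when $\varepsilon$ is.

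\textbf{Step 1 (complement of the bad set).} Fix $\omega\notin\Omega_\delta$. By the definition of $\Omega_\delta$ as the union of the local sets $\Omega_{\delta,\boldsymbol{n}}$, we have $\omega\notin\Omega_{\delta,\boldsymbol{n}}$ for every $\boldsymbol{n}\in\mathbb{Z}^\nu$. Hence
\[
|g_{\boldsymbol{n}}(\omega)|\le \delta^{-\frac12}\tnorm{\boldsymbol{n}}_\kappa \quad\text{for all }\boldsymbol{n}\in\mathbb{Z}^\nu.
\]
This holds uniformly in $\boldsymbol{n}$, outside a set of probability $\mathcal O(e^{-\delta^{-1}})=\mathcal O(e^{-c\varepsilon^{-1-\eta}})$. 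That probability is negligible at the large-deviation scale, which is the reason for calling the bound "almost sure" in this regime.

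\textbf{Step 2 (combine with the decay of $c(\boldsymbol{n})$).} Multiply the bound from Step 1 by \eqref{eq:decay}. Since both weights are products over the same index set $(j,j')$, they combine coordinatewise:
\[
\tnorm{\boldsymbol{n}}_{-\rho}\,\tnorm{\boldsymbol{n}}_{\kappa}=\prod_{j,j'}(1+|n_{j,j'}|)^{-(\rho_{j,j'}-\kappa_{j,j'})}=\tnorm{\boldsymbol{n}}_{-(\rho-\kappa)}.
\]
This gives $|c(\boldsymbol{n})g_{\boldsymbol{n}}(\omega)|\le \delta^{-1/2}\tnorm{\boldsymbol{n}}_{-(\rho-\kappa)}$.

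\textbf{Step 3 (insert the scale of $\delta$).} With $\delta\sim\varepsilon^{1+\eta}$ we get $\delta^{-1/2}\sim\varepsilon^{-\frac12-\frac\eta2}$, which is \eqref{eq:decayall}. The implicit constant in "$\sim$" can be absorbed by choosing $\delta=\varepsilon^{1+\eta}$ exactly, or by adjusting the constant in the definition of $\Omega_\delta$.

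The only point requiring care is this bookkeeping of constants when passing from $\delta\sim\varepsilon^{1+\eta}$ to the clean inequality. I would resolve it by simply fixing $\delta:=\varepsilon^{1+\eta}$. I would also remark that the hypothesis $\rho_{j,j'}-\kappa_{j,j'}>2$ makes the resulting sequence summable (indeed $\ell^1$ with room to spare). This is what later estimates require, but it is not needed for the inequality itself.
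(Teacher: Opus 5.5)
Your proof is correct and follows the paper's own (implicit) argument. The corollary has no separate proof in the paper, but the proof of Lemma~\ref{lemma:vanishing} uses exactly your chain: for $\omega\notin\Omega_\delta$, $|c(\boldsymbol n)g_{\boldsymbol n}(\omega)|\le \delta^{-1/2}\tnorm{\boldsymbol n}_{-\rho}\tnorm{\boldsymbol n}_{\kappa}=\delta^{-1/2}\tnorm{\boldsymbol n}_{-(\rho-\kappa)}$, with $\delta\sim\varepsilon^{1+\eta}$. You are also right that the conditions on $\rho-\kappa$ and $\eta$ are not used for this inequality; they only enter later, in Lemma~\ref{lemma:vanishing}, Lemma~\ref{size1} and Lemma~\ref{eq:duhamel}.
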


\begin{remark}[$\rho-\kappa$]
Roughly speaking, the condition on $\rho$ and $\kappa$ requires $\rho_{j,j'} - \kappa_{j,j'}$ to be sufficiently large to ensure absolute convergence of series appearing in estimates such as \autoref{size1}.
\end{remark}
\begin{remark}[$\eta$]
Regarding the assumption on $\eta$, there are three remarks below. 

\begin{enumerate}
\item [(a)] The condition $\eta\ge0$ ensures that the second term in \eqref{ee} is asymptotically negligible compared to the first term. If this condition fails, for example, $\delta = \varepsilon^{1/2}$, then
\[
\varepsilon \log\left(1 + \frac{\mathbb P(\Omega_\delta)}{\mathbb P(\mathscr A_\varepsilon^{(3)})}\right)
\sim
\varepsilon \log\left(1 + e^{-\frac{1}{\sqrt{\varepsilon}} + \frac{1}{\varepsilon}}\right)
\sim1-\sqrt{\varepsilon}\to 1,\quad\varepsilon\to0^+.
\]
%
Hence, the contribution of the bad set is no longer negligible at the scale of large deviations. Similar analysis to \eqref{w2} and \eqref{w3}. 

\item[(b)] The condition $\eta < 2\left(\frac{1}{\nu}-\mu_1\right)\eta_2-\mu_2$ is required in order to get the effective scale $\mathcal O(\varepsilon^{-\frac{1}{2}+\frac{\mu_2}{2}})$ of $\mathscr R_N$; see \autoref{lemma:vanishing}. 

\item[(c)] The condition $\eta < \eta_3$ is imposed to obtain the effective scale
$\mathcal{O}(\varepsilon^{-\frac{1}{2}+\frac{\mu_3}{2}})$ for the Duhamel estimates by working on a shorter time scale, namely after reducing the original time scale by a factor of
$\varepsilon^{\frac{\eta}{2}+\frac{\mu_3}{2}}$; see \autoref{eq:duhamel}.
    
\end{enumerate}
\end{remark}
}

\subsection{Problem Statement and Main Results}
Motivated by the preceding discussion, we consider the Cauchy problem on $\mathbb{R}^d$ for a weakly nonlinear Schr\"odinger equation (NLS) with randomized quasi-periodic initial data:
\begin{empheq}[left=\empheqlbrace]{align}
    &\mathrm{i}\partial_t u + \Delta u + {\varepsilon^{\alpha}} |u|^{2} u = 0,\quad\alpha>1; \label{eq:nls}\\
    &u(0, \boldsymbol{x}) = \sum_{\boldsymbol{n} \in \mathbb{Z}^\nu} 
        c(\boldsymbol{n})\, g_{\boldsymbol{n}}\, 
        e^{\mathrm{i} \langle \boldsymbol{n}\pmb{\Omega}, \boldsymbol{x} \rangle}. \label{eq:initialdata}
\end{empheq}

Here $u = u(t,\boldsymbol{x})$ is a complex-valued function with $(t,\boldsymbol{x}) \in \mathbb{R} \times \mathbb{R}^d$. 
The operator $\partial_t$ denotes the time derivative, and $\Delta = \sum_{j=1}^d \partial_j^2$ is the Laplacian, where $\partial_j = \frac{\partial}{\partial x_j}$. 
The parameter $0 < \varepsilon \ll 1$ measures the strength of the nonlinearity, and $\mathrm{i}$ denotes the imaginary unit satisfying $\mathrm{i}^2 = -1$. 


For model \eqref{eq:nls}-\eqref{eq:initialdata}, we study the corresponding rogue waves introduced above. Although evolution equation \eqref{eq:nls} is deterministic, randomness in initial data \eqref{eq:initialdata} induces a stochastic process \(t \mapsto u(t,\boldsymbol{x})\) for each fixed \(\boldsymbol{x} \in \mathbb{R}^d\), and a random field \(\boldsymbol{x} \mapsto u(t,\boldsymbol{x})\) for each fixed \(t \in \mathbb{R}\).

We quantify such extreme events by introducing the tail event
\begin{equation}\label{eq:rogue}
    \mathscr{A}_\varepsilon^\infty = \left\{ \sup_{\boldsymbol{x} \in \mathbb{R}^d} |u(t, \boldsymbol{x})| > z_0 {\varepsilon^{-\frac{1}{2}}} \right\},
\end{equation}
where $z_0 > 0$ is a fixed threshold. This event describes the occurrence of a wave of amplitude at least $z_0 \varepsilon^{-1/2}$ at a fixed time $t>0$.

The main objective of this paper is to describe the asymptotic behavior of the probability
$
\mathbb{P}(\mathscr{A}_\varepsilon^\infty)
$
as $\varepsilon \to 0^+$.

Our main result is a nonlinear large deviations principle in the subcritical time regime (\autoref{thm:nonlinearldp}), together with a linear large deviations principle (\autoref{thm:linearldp}) and a well-posedness result (\autoref{thm:eude}).

\begin{theorem}[Large Deviations Principle]\label{thm:nonlinearldp}
Consider randomized quasi-periodic Cauchy problem \eqref{eq:nls}--\eqref{eq:initialdata} together with \autoref{cor:polynomial_decay}, and the rogue wave event $\mathscr{A}_\varepsilon^\infty$ defined in \eqref{eq:rogue}. 
If the observation time satisfies
\[
t =\mathcal O\left(\varepsilon^{-\beta}\right), \quad\beta=\alpha-1-\frac{3}{2}\eta-\frac{\mu_3}{2}.
\]
Then we have
\[
\lim_{\varepsilon\to0^+}
\varepsilon \log \mathbb{P}\big(\mathscr{A}_\varepsilon^\infty\big)
=
-\frac{z_0^2}{\sum_{\boldsymbol n\in\mathbb Z^\nu} |c(\boldsymbol n)|^2}.
\]
\end{theorem}

\begin{remark}[Function Space]
To the best of our knowledge, this is the first large deviations result on randomization for the spatially quasi-periodic data Cauchy problem in the framework of dispersive equations.

In contrast to decaying or periodic functions, quasi-periodic functions exhibit persistent oscillations at infinity, which makes the analysis considerably more delicate than in the classical settings.

\end{remark}

\begin{remark}[Time Scale]
{
Clearly, $\alpha - 1 = \beta+\frac{3}{2}\eta+\frac{\mu_3}{2}>\beta$, which corresponds to the so-called \emph{subcritical} regime in the sense of \cite{GGKS23}, namely the regime preceding the onset of the first nonlinear effect; see \cite{s26}. 

} 
\end{remark}

\begin{remark}[Decay Assumption]
In \cite{GGKS23}, Garrido et al. established a large deviations principle for the periodic cubic NLS in one spatial dimension under an exponential decay assumption in Fourier space, or rather, on the determining component $\{c_n\}_{n\in\mathbb Z}$ of Fourier coefficients. They further raised the question of identifying the optimal decay condition under which the LDP holds.

Recently, this assumption has been relaxed. In \cite{LW25}, Liang and Wang improved the result to $\ell^1(\mathbb Z)$ decay, while Fan and Ye \cite{FL25} further extended it to $c_{n\in\mathbb Z} = \langle n \rangle^{-(\frac{1}{2}+\theta)}$ for any $\theta>0$. 

In the present paper, we consider the randomized quasi-periodic Cauchy problem \eqref{eq:nls}--\eqref{eq:initialdata} under a polynomial decay assumption of the form \eqref{eq:decay} based on \cite{DLX24JFA}. It is unclear whether the corresponding exponent is optimal, and determining the sharp decay threshold ensuring the validity of the large deviations principle in the quasi-periodic setting remains open.
\end{remark}
\begin{remark}[Proof]
(a)~The proof of \autoref{thm:nonlinearldp} consists of three main steps.
\begin{itemize}
\item We first establish a linear LDP for the associated linear problem. In this step, we analyze the explicit law of the linear solution, in particular its modulus squared. The lower bound follows from pointwise estimates, while the upper bound is obtained via global control in polar coordinates, combined with analytic and probabilistic arguments; see \autoref{sec:linearldp} and \autoref{fig:linearproof}.

\item We then carry out a well-posedness analysis and derive Duhamel estimates on the relevant time scale. To this end, we employ a combinatorial argument (see \cite{C07,DG16,X25}) to identify the relevant time scale and derive decay properties of the solution; see \autoref{sec:wellp}.

\item Finally, we establish the nonlinear LDP by combining Duhamel's formula with the above estimates, thereby completing the proof; see \autoref{sec:nonlinear}.
\end{itemize}

(b)~In the one-dimensional periodic setting, that is, on the torus $\mathbb T$, we compare our method with that of \cite{GGKS23}.
\begin{itemize}
\item In the linear setting, the analysis of~\cite{GGKS23} is based on the G\"artner--Ellis theorem, reducing the derivation of the large deviation principle (LDP) to the computation of the {\em cumulant generating function}. The associated {\em rate function} is then obtained through the {\em Fenchel--Legendre transform}, yielding the linear LDP. Moreover, their argument relies on specific number-theoretic properties of $\mathbb Z$, whose analogues in the quasi-periodic setting seem unavailable.
    
    In contrast, we directly exploit the explicit distribution of the squared modulus of the linear solution, combined with truncation and probabilistic arguments, which is new and simplifies the original argument. Independently, Liang and Wang also provided a new proof, avoiding combinatorial arguments and instead relying on analytic methods, for the linear LDP within the framework of the G\"artner-Ellis theorem; see \cite{LW25}.
\end{itemize}
\end{remark}

\subsection{Outline of the paper}
In \autoref{sec:linearldp}, we establish a linear LDP for the associated linear problem. In \autoref{sec:wellp}, we prove a well-posedness result, including the relevant time scale and Fourier decay properties. In \autoref{sec:nonlinear}, we derive the Duhamel estimates and establish the nonlinear LDP. In \autoref{appendixps}, we collect some basic facts on probability theory related to complex Gaussian random variables.

\subsection{Notation}
We summarize some notation used throughout the paper below.
\begin{table}[htpb]
\centering
\renewcommand{\arraystretch}{1.2}
\begin{tabular}{ll}
\toprule
Notation & Meaning \\
\midrule
${}^\top$ 
& transpose (vectors are written in row form) \\

$\langle \boldsymbol{y},\boldsymbol{z}\rangle$ 
& $\boldsymbol{y}\boldsymbol{z}^{\top}=\sum_{j=1}^N y_j z_j$ \\

$A=\mathcal{O}(B)\ (\text{or } A\lesssim B)$ 
& $|A|\le CB$ for some $C>0$ independent of parameters \\

$A=\mathcal{O}_D(B)\ (\text{or } A\lesssim_D B)$ 
& $|A|\le C_D B$, where $C_D$ may depend on $D$ \\

%


$a+$ 
& $a+\mu$, where $0<\mu\ll1$ arbitrarily small \\

$a-$ 
& $a-\mu$, where $0<\mu\ll1$ arbitrarily small \\

\bottomrule
\end{tabular}
\end{table}

\section{Linear LDP}\label{sec:linearldp}

Consider the linear problem obtained by setting \(\varepsilon = 0\) in \eqref{eq:nls}, namely
\begin{equation*}
\begin{cases}
\mathrm{i}\partial_t u + \Delta u = 0,\\
u(0,\boldsymbol{x}) = \displaystyle \sum_{\boldsymbol{n} \in \mathbb{Z}^\nu} c(\boldsymbol{n})\, g_{\boldsymbol{n}}\, e^{\mathrm{i}\langle \boldsymbol{n}\boldsymbol{\Omega}, \boldsymbol{x} \rangle}.
\end{cases}
\end{equation*}
The solution is given explicitly by random Fourier series
\begin{equation}\label{eq:linearsolution}
u_{\mathrm{linear}}(t,\boldsymbol{x})
= \sum_{\boldsymbol{n} \in \mathbb{Z}^\nu} c(\boldsymbol{n})\, g_{\boldsymbol{n}}\,
e^{-\mathrm{i} t Q(\boldsymbol{n})}\,
e^{\mathrm{i}\langle \boldsymbol{n}\boldsymbol{\Omega}, \boldsymbol{x} \rangle},
\end{equation}
where the dispersion relation (neglecting the minus) is
\[
Q(\boldsymbol{n})
= \sum_{j=1}^d \big\langle n_j,\omega_j \big\rangle^2,
\qquad
\boldsymbol{n}=(n_1,\cdots,n_d)\in\mathbb Z^{\nu_1}\times\cdots\times\mathbb{Z}^{\nu_d}.
\]

\begin{theorem}[Linear LDP]\label{thm:linearldp}
Replacing \(u\) in \eqref{eq:rogue} by linear evolution \eqref{eq:linearsolution}, we obtain the following large deviations principle:
\begin{align}\label{eq:linearldp}
\lim_{\varepsilon \to 0^+} \varepsilon \log \mathbb{P}(\mathscr{A}_\varepsilon^\infty)
= -\frac{z_0^2}{\sum_{\boldsymbol{n} \in \mathbb{Z}^\nu} |c(\boldsymbol{n})|^2}.
\end{align}
\end{theorem}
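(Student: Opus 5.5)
Write $\sigma^2:=\sum_{\boldsymbol n}|c(\boldsymbol n)|^2$, which is finite by \autoref{thm:expectation-variance}. The plan is to obtain matching lower and upper bounds for $\varepsilon\log\mathbb P(\mathscr A_\varepsilon^\infty)$. The starting observation is that for each fixed $(t,\boldsymbol x)$ the value $u_{\mathrm{linear}}(t,\boldsymbol x)$ in \eqref{eq:linearsolution} is a sum of independent centered complex Gaussians with deterministic unimodular phases. It is therefore itself $\mathscr N_{\mathbb C}(0,\sigma^2)$, and $|u_{\mathrm{linear}}(t,\boldsymbol x)|^2/\sigma^2\sim\mathscr E(1)$.

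\textbf{Lower bound.} Fix any point, say $\boldsymbol x=\boldsymbol 0$. Then
\[
\mathbb P(\mathscr A_\varepsilon^\infty)\ge \mathbb P\bigl(|u_{\mathrm{linear}}(t,\boldsymbol 0)|>z_0\varepsilon^{-1/2}\bigr)=\exp\bigl(-z_0^2/(\sigma^2\varepsilon)\bigr).
\]
This gives $\liminf\varepsilon\log\mathbb P\ge -z_0^2/\sigma^2$ with no error at all.

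\textbf{Upper bound.} The supremum over the noncompact $\mathbb R^d$ has to be controlled, and quasi-periodicity is what makes this possible. We have $u_{\mathrm{linear}}(t,\boldsymbol x)=U_t(\boldsymbol x\boldsymbol\Omega^\top)$, where
\[
U_t(\boldsymbol y)=\sum_{\boldsymbol n} c(\boldsymbol n)g_{\boldsymbol n}e^{-\iu tQ(\boldsymbol n)}e^{\iu\langle\boldsymbol n,\boldsymbol y\rangle}
\]
is defined on $\mathbb T^\nu$. Hence $\sup_{\mathbb R^d}|u_{\mathrm{linear}}|\le\sup_{\mathbb T^\nu}|U_t|$, and it suffices to bound the supremum over a compact torus. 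Fix small $\theta>0$ and a truncation level $N$, and split $U_t=U_t^{\le N}+U_t^{>N}$.

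For the tail, $\sup_{\boldsymbol y}|U_t^{>N}|\le\sum_{|\boldsymbol n|>N}|c(\boldsymbol n)||g_{\boldsymbol n}|$. Off the exceptional set $\Omega_\delta$ of \autoref{thm:poly}, with $\delta=\varepsilon/K$, this is at most
\[
(K/\varepsilon)^{1/2}\sum_{|\boldsymbol n|>N}\tnorm{\boldsymbol n}_{-(\rho-\kappa)},
\]
which is below $\theta z_0\varepsilon^{-1/2}$ once $N=N(K,\theta)$ is large. Meanwhile $\mathbb P(\Omega_\delta)\lesssim e^{-K/\varepsilon}$, and we choose $K>z_0^2/\sigma^2$.

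For the finite part $U_t^{\le N}$, a trigonometric polynomial of degree $\le N$ in $\nu$ variables, cover $\mathbb T^\nu$ by a grid of $M^\nu$ points with $M=M(N,\theta)$ independent of $\varepsilon$. Bernstein's inequality controls the oscillation between grid points: $\|\nabla U^{\le N}_t\|_\infty\lesssim N\|U_t^{\le N}\|_\infty$. So if $M\gg N/\theta$, then $\sup|U_t^{\le N}|\le(1-\theta)^{-1}\max_{\text{grid}}|U_t^{\le N}|$. At each grid point $U_t^{\le N}$ is complex Gaussian with variance $\le\sigma^2$, so a union bound gives
\[
\mathbb P\bigl(\sup|U_t^{\le N}|>(1-\theta)^2z_0\varepsilon^{-1/2}\bigr)\le M^\nu\exp\bigl(-(1-\theta)^4z_0^2/(\sigma^2\varepsilon)\bigr).
\]
Combining the two parts,
\[
\mathbb P(\mathscr A_\varepsilon^\infty)\le M^\nu e^{-(1-\theta)^4z_0^2/(\sigma^2\varepsilon)}+Ce^{-K/\varepsilon}.
\]
Taking $\varepsilon\log$, then $\varepsilon\to0$, then $\theta\to0$ yields $\limsup\le -z_0^2/\sigma^2$.

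The main obstacle is exactly this upper-bound step: passing from the pointwise Gaussian tail to the tail of the supremum without losing at the exponential scale. The order of choices is what makes it work: $K$ first, then $N$ (from the $\varepsilon$-uniform tail bound off $\Omega_\delta$), then $M$ via Bernstein. With this ordering the polynomial entropy factor $M^\nu$ is independent of $\varepsilon$ and disappears under $\varepsilon\log$. The threshold splitting $(1-\theta)^2+\theta$ just needs to be kept consistent, which only requires adjusting $\theta$.
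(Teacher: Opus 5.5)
Your proof is correct. Your lower bound is exactly the paper's pointwise exponential-tail argument, but your upper bound takes a genuinely different route.

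The paper discards all phases at once, $\sup_{\boldsymbol x}|u_{\mathrm{linear}}(t,\boldsymbol x)|\le\sum_{\boldsymbol n}|c(\boldsymbol n)|\,|g_{\boldsymbol n}|$, and truncates at the $\varepsilon$-dependent level $N=\varepsilon^{-\frac1\nu+\mu_1}$. It then uses Cauchy--Schwarz to reduce the low modes to the chi-squared variable $\sum_{\Lambda_N}2|g_{\boldsymbol n}|^2$. Its Chernoff bound has the right rate because $|\Lambda_N|=o(\varepsilon^{-1})$. In effect, even perfect phase alignment costs nothing at the exponential scale, since Cauchy--Schwarz returns exactly $\sum|c(\boldsymbol n)|^2$.

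You instead keep the phases. You lift to $\mathbb T^\nu$ for compactness, pass from the supremum to a maximum over an $\varepsilon$-independent grid via Bernstein's inequality, and union-bound exact complex Gaussian tails.

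What each route buys:
\begin{itemize}
\item The paper's route needs no regularity input. Being phase-blind, it in fact controls $\sup_{t,\boldsymbol x}|u_{\mathrm{linear}}|$, so it gives the upper bound simultaneously for all times.
\item Your route has a cleaner order of quantifiers: $K>z_0^2/\sigma^2$ with $\delta=\varepsilon/K$, then $\theta$, then $N$, then $M$, all independent of $\varepsilon$. You therefore need only the qualitative convergence of $\sum_{\boldsymbol n}\tnorm{\boldsymbol n}_{-(\rho-\kappa)}$, with no Chernoff bound over a growing number of degrees of freedom and no quantitative decay of the tail in $N$.
\item In the paper, that quantitative tail decay is what produces the constraint on $\eta$ involving $\eta_2$. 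The product of one-dimensional tail sums used there also overstates the decay, since outside the box $\Lambda_N$ only one coordinate is forced to exceed $N$. Your argument is insensitive to this.
\end{itemize}

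One small bookkeeping slip: the tail bound gives $\sup|U_t^{\le N}|>(1-\theta)z_0\varepsilon^{-1/2}$, and this threshold is what yields the exponent $(1-\theta)^4$ after the grid step. With the threshold $(1-\theta)^2$ as you wrote it, you would get $(1-\theta)^6$. Either way the conclusion is unaffected.
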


\begin{remark}
For the proof of \autoref{thm:linearldp}, we obey \autoref{fig:linearproof}. 
\begin{figure}[htbp]
\centering
\begin{tikzpicture}[
    node distance=1.5cm and 3.0cm,  
    block/.style={
        rectangle, draw, thick, 
        inner sep=3pt,  
        align=center,
        font=\small,     
        minimum width=2.1cm,
        minimum height=0.7cm
    },
    arrow/.style={
        thick, -stealth,
        shorten >=1pt, shorten <=1pt
    },
    midbox/.style={
        rectangle, draw, thin, black, 
        inner sep=4pt,
        font=\scriptsize,  
        align=center,
        fill=none
    },
    newbox/.style={
        rectangle, draw, thin, black,
        inner sep=4pt,
        minimum width=2.2cm,
        font=\scriptsize,
        align=center,
        fill=none
    }
]

\node[block] (bottom) {Distribution of Linear Solution:\\[1mm] $u_{\text{linear}}(t,\boldsymbol x)\sim\mathscr N_{\mathbb C}\left(0,2\sigma^2\right)$};
\node[block, above=of bottom] (star) {Distribution of Modulus Squared:\\[1mm] $|u_{\text{linear}}(t,\boldsymbol x)|^2\sim\mathscr E\left(\frac{1}{2\sigma^2}\right)$};

\coordinate[above=1.5cm of star] (merge);

\node[block, above left=1.2cm and 3.0cm of merge] (lower) {Lower Bound};
\node[block, above right=1.2cm and 3.0cm of merge] (upper) {Upper Bound};

\coordinate[above=1.5cm of $(lower)!0.5!(upper)$] (v_line);

\node[block, above=of v_line] (linear) {Linear LDP};

\draw[arrow] (bottom.north) -- (star.south);
\draw[arrow] (star.north) -- (merge);
\draw[arrow] (merge) -- (lower.south) coordinate[midway] (mid_lower);
\draw[arrow] (merge) -- (upper.south) coordinate[midway] (mid_upper);
\draw[arrow] (lower.north) -- (v_line);
\draw[arrow] (upper.north) -- (v_line);
\draw[arrow] (v_line) -- (linear.south);

\path let \p1=(merge), \p2=(lower.south), \n1={atan2(\y2-\y1, \x2-\x1)+180} in
      node[midbox, rotate=\n1, anchor=north, yshift=-3pt, xshift=2pt] (pointwise_box) at (mid_lower) {Pointwise Control\\ $\mathscr A^{(0)}_\varepsilon\subset\mathscr A_\varepsilon^\infty$};

\path let \p1=(merge), \p2=(upper.south), \n1={atan2(\y2-\y1, \x2-\x1)} in
      node[midbox, rotate=\n1, anchor=north, yshift=-3pt, xshift=-2pt] (global_box) at (mid_upper) {Global Control\\ $\mathscr A_\varepsilon^\infty\subset\mathscr A_\varepsilon^{(1)}$};

\coordinate (global_box_e) at (global_box.south);
\coordinate (branch_point) at ($(global_box_e) + (1.5cm, -0.5cm)$);  
\draw[arrow]  (branch_point)--(global_box_e);

\node[newbox, above right=0.2cm and 0.75cm of branch_point] (new_box1) {High Modes: \\[1mm] Tail of Gaussian + \\
Decay of Deterministic Weight};
\node[newbox, below right=0.2cm and 0.75cm of branch_point] (new_box2) {Low Modes: \\[1mm] H\"older +\\
Pointwise Control + Chernoff Bound};

\draw[arrow] (new_box1.west)--(branch_point) ;
\draw[arrow] (new_box2.west)--(branch_point) ;

\end{tikzpicture}
\caption{Outline of the proof of \autoref{thm:linearldp}. We first derive the explicit distribution of the linear solution and its modulus squared; see \autoref{sec:dis}. Based on this representation, we obtain a lower bound for the rogue wave probability via pointwise estimates; see \autoref{sec:lowerbound}. We then establish the corresponding upper bound by working in polar coordinates. For the upper bound, we perform truncation of the solution into low and high frequency components. 
For the high modes, the tail of Gaussian and decay of the coefficients yield an effective control; see \autoref{sec:upperbound}.
For the low modes, we apply H\"older's inequality to separate the deterministic and random components: the deterministic part is controlled by decay of the Fourier coefficients, while the random component has an explicit distribution and is estimated via pointwise analysis (lower bound) and Chernoff bound (upper bound). }
\label{fig:linearproof}
\end{figure}

\end{remark}

\subsection{Distribution of the Linear Solution}\label{sec:dis}

In this subsection, we study the distribution of the linear solution and its modulus squared.

Since $\{g_{\boldsymbol{n}}\}_{\boldsymbol{n} \in \mathbb{Z}^\nu}$ are i.i.d.\ standard complex Gaussian random variables, the linear evolution preserves Gaussianity. In particular, $u_{\mathrm{linear}}(t,\boldsymbol{x})$ is a complex Gaussian random variable for every $(t,\boldsymbol{x}) \in \mathbb{R} \times \mathbb{R}^d$. Moreover, it is centered, i.e., 
\[
\mathbb{E}\big[u_{\mathrm{linear}}(t,\boldsymbol{x})\big]=0.
\]
In addition, it follows from $\mathbb{E}[g_{\boldsymbol{n}} g_{\boldsymbol{m}}] = 0$ and $\mathbb{E}[g_{\boldsymbol{n}} \overline{g_{\boldsymbol{m}}}] = \delta_{\boldsymbol{n},\boldsymbol{m}}$ that
\begin{align*}
\mathbb{E}\big[ (u_{\mathrm{linear}}(t, \boldsymbol{x}))^2 \big]
&= \mathbb{E} \Bigg[ \sum_{\boldsymbol{n}, \boldsymbol{m} \in \mathbb{Z}^\nu}
c(\boldsymbol{n}) c(\boldsymbol{m}) \,
g_{\boldsymbol{n}} g_{\boldsymbol{m}} \,
e^{-\mathrm{i} t (Q(\boldsymbol{n})+Q(\boldsymbol{m}))}
e^{\mathrm{i} \langle (\boldsymbol{n}+\boldsymbol{m})\boldsymbol{\Omega}, \boldsymbol{x} \rangle}
\Bigg] \nonumber \\
&= \sum_{\boldsymbol{n}, \boldsymbol{m} \in \mathbb{Z}^\nu}
c(\boldsymbol{n}) c(\boldsymbol{m}) \,
\mathbb{E}\big[g_{\boldsymbol{n}} g_{\boldsymbol{m}}\big] \,
e^{-\mathrm{i} t (Q(\boldsymbol{n})+Q(\boldsymbol{m}))}
e^{\mathrm{i} \langle (\boldsymbol{n}+\boldsymbol{m})\boldsymbol{\Omega}, \boldsymbol{x} \rangle} \nonumber \\
&=0,
\end{align*}
and
\begin{align*}
\mathbb{E}\big[ |u_{\mathrm{linear}}(t, \boldsymbol{x})|^2 \big]
&= \mathbb{E} \Bigg[ \sum_{\boldsymbol{n}, \boldsymbol{m} \in \mathbb{Z}^\nu}
c(\boldsymbol{n}) \overline{c(\boldsymbol{m})} \,
g_{\boldsymbol{n}} \overline{g_{\boldsymbol{m}}} \,
e^{-\mathrm{i} t (Q(\boldsymbol{n})-Q(\boldsymbol{m}))}
e^{\mathrm{i} \langle (\boldsymbol{n}-\boldsymbol{m})\boldsymbol{\Omega}, \boldsymbol{x} \rangle}
\Bigg] \nonumber \\
&= \sum_{\boldsymbol{n}, \boldsymbol{m} \in \mathbb{Z}^\nu}
c(\boldsymbol{n}) \overline{c(\boldsymbol{m})} \,
\mathbb{E}\big[g_{\boldsymbol{n}} \overline{g_{\boldsymbol{m}}}\big] \,
e^{-\mathrm{i} t (Q(\boldsymbol{n})-Q(\boldsymbol{m}))}
e^{\mathrm{i} \langle (\boldsymbol{n}-\boldsymbol{m})\boldsymbol{\Omega}, \boldsymbol{x} \rangle} \nonumber \\
&= \sum_{\boldsymbol{n} \in \mathbb{Z}^\nu} |c(\boldsymbol{n})|^2\nonumber\\
&\triangleq 2\sigma^2.
\end{align*}

Consequently, for each $(t,\boldsymbol{x}) \in \mathbb{R} \times \mathbb{R}^d$, the linear solution $u_{\mathrm{linear}}(t,\boldsymbol{x})$ is distributed as a centered complex Gaussian
\[
u_{\mathrm{linear}}(t,\boldsymbol{x}) \sim \mathcal{N}_{\mathbb{C}}(0,2\sigma^2),
\]
with variance independent of both time and space.

By the properties of complex Gaussian random variables, the normalized modulus squared 
follows a chi-squared distribution with two degrees of freedom, which is equivalent to an exponential distribution with rate parameter \(1/2\). That is, 
\[
\sigma^{-2} |u_{\mathrm{linear}}(t,\boldsymbol{x})|^2\sim\chi_2^2=\mathscr E\left(\frac{1}{2}\right).
\]
Equivalently, \(|u(t,\boldsymbol{x})|^2\) is exponentially distributed with rate parameter \(\frac{1}{2\sigma^2}\), i.e.
\[
|u_{\mathrm{linear}}(t,\boldsymbol{x})|^2 \sim \mathscr{E}\!\left(\frac{1}{2\sigma^2}\right),
\]
with probability density function
\[
f(x) = \frac{1}{2\sigma^2}\, e^{-\frac{x}{2\sigma^2}} \boldsymbol{1}_{[0,\infty)}(x).
\]

\subsection{Lower Bound}\label{sec:lowerbound}


Recall that \(u_{\mathrm{linear}}(t,\boldsymbol{x})\) is a centered complex Gaussian random field with variance \(2\sigma^2\), invariant in both time and space. Consequently, for every fixed \((t,\boldsymbol{x})\in\mathbb R\times\mathbb R^d\), the random variable \(|u_{\mathrm{linear}}(t,\boldsymbol{x})|^2\) follows an exponential distribution. This enables us to obtain a lower bound through the analysis of its pointwise tail probability, which we carry out in this subsection. 

To this end, for any fixed \((t,\boldsymbol{x}) \in \mathbb{R} \times \mathbb{R}^d\), we define the event
\begin{equation*}
    \mathscr{A}^{(0)}_\varepsilon
    =
    \left\{
    |u_{\mathrm{linear}}(t,\boldsymbol{x})|
    >
    z_0\,\varepsilon^{-1/2}
    \right\}.
\end{equation*}

Since the supremum over the spatial domain dominates any pointwise value, we have \(\mathscr{A}^{(0)}_\varepsilon \subset \mathscr{A}_\varepsilon^\infty\). Consequently,
\begin{align*}
    \mathbb{P}(\mathscr{A}_\varepsilon^\infty)
    &\geq \mathbb{P}(\mathscr{A}^{(0)}_\varepsilon) \\
    &= \mathbb{P}\big(|u_{\mathrm{linear}}(t,\boldsymbol{x})|^2 \geq z_0^2 \varepsilon^{-1}\big) \\
    &= \int_{z_0^2 \varepsilon^{-1}}^{\infty} \frac{1}{2\sigma^2} e^{-\frac{x}{2\sigma^2}}\, dx \\
    &= \exp\!\left(-\frac{z_0^2}{2\sigma^2}\varepsilon^{-1}\right).
\end{align*}

Taking logarithms and multiplying by \(\varepsilon\), we obtain
\[
    \varepsilon \log \mathbb{P}(\mathscr{A}_\varepsilon^\infty)
    \geq
    -\frac{z_0^2}{2\sigma^2}.
\]
Passing to the limit \(\varepsilon \to 0^+\) yields
\begin{equation*}
    \liminf_{\varepsilon \to 0^+}
    \varepsilon \log \mathbb{P}(\mathscr{A}_\varepsilon^\infty)
    \geq -\frac{z_0^2}{2\sigma^2}.
\end{equation*}

Recalling that \(2\sigma^2 = \sum_{\boldsymbol{n} \in \mathbb{Z}^\nu} |c(\boldsymbol{n})|^2\), we obtain
\begin{equation}\label{eq:lower}
    \liminf_{\varepsilon \to 0^+} \varepsilon \log \mathbb{P}(\mathscr{A}_\varepsilon^\infty)
    \geq -\frac{z_0^2}{\sum_{\boldsymbol{n} \in \mathbb{Z}^\nu} |c(\boldsymbol{n})|^2}.
\end{equation}


\subsection{Upper Bound}\label{sec:upperbound}

In this subsection, we establish an upper bound for the probability of the rogue wave event \(\mathscr{A}_\varepsilon^\infty\). In contrast to the pointwise argument used for the lower bound, the proof of the upper bound relies on a global control of the random field, obtained via polar coordinates, truncation , H\"older's inequality, and standard probabilistic estimates including Chernoff bound. 

\subsubsection{Polar Coordinates of the Random Component}

Let \(g_{\boldsymbol{n}} = r_{\boldsymbol{n}} e^{\iu \theta_{\boldsymbol{n}}}\), where \(r_{\boldsymbol{n}} \ge 0\) and \(\theta_{\boldsymbol{n}}\in[0,2\pi)\) are independent, with
\(2r_{\boldsymbol{n}}^2 \overset{\mathrm{iid}}{\sim} \mathscr{E}(1/2)\) and \(\theta_{\boldsymbol{n}} \overset{\mathrm{iid}}{\sim} \mathscr{U}[0,2\pi]\).
Substituting this polar representation into \eqref{eq:linearsolution}, we obtain
\begin{align}\label{usolution}
u_{\mathrm{linear}}(t,\boldsymbol{x})
=
\sum_{\boldsymbol{n}\in\mathbb{Z}^\nu}
c(\boldsymbol{n})\, r_{\boldsymbol{n}}\, e^{\iu \phi_{\boldsymbol{n}}(t,\boldsymbol{x})},
\end{align}
where the phase
\[
\phi_{\boldsymbol{n}}(t,\boldsymbol{x})
=
\theta_{\boldsymbol{n}}
- Q(\boldsymbol{n})t
+ \langle \boldsymbol{n}\boldsymbol{\Omega}, \boldsymbol{x} \rangle.
\]

To control the supremum of the field, we observe that
\begin{align*}
\sup_{\boldsymbol{x}\in\mathbb{R}^d} |u_{\mathrm{linear}}(t,\boldsymbol{x})|^2
&=
\sup_{\boldsymbol{x}\in\mathbb{R}^d}
\operatorname{Re}\!\left(u_{\mathrm{linear}}(t,\boldsymbol{x})\,\overline{u_{\mathrm{linear}}(t,\boldsymbol{x})}\right) \nonumber\\
&=
\sup_{\boldsymbol{x}\in\mathbb{R}^d}
\sum_{\boldsymbol{n},\boldsymbol{m}\in\mathbb{Z}^\nu}
r_{\boldsymbol{n}} r_{\boldsymbol{m}}
\operatorname{Re}\!\left(
c(\boldsymbol{n})\,\overline{c(\boldsymbol{m})}\,
e^{\iu(\phi_{\boldsymbol{n}}(t,\boldsymbol{x})-\phi_{\boldsymbol{m}}(t,\boldsymbol{x}))}
\right) \nonumber\\
&\le
\sum_{\boldsymbol{n},\boldsymbol{m}\in\mathbb{Z}^\nu}
r_{\boldsymbol{n}} r_{\boldsymbol{m}}\, |c(\boldsymbol{n})|\, |c(\boldsymbol{m})| \nonumber\\
&=
\left(\sum_{\boldsymbol{n}\in\mathbb{Z}^\nu} |c(\boldsymbol{n})|\, r_{\boldsymbol{n}}\right)^2.
\end{align*}

Consequently, we obtain the uniform bound
\begin{align}\label{eq:cr}
\sup_{\boldsymbol{x}\in\mathbb{R}^d} |u_{\mathrm{linear}}(t,\boldsymbol{x})|
\le
\sum_{\boldsymbol{n}\in\mathbb{Z}^\nu} |c(\boldsymbol{n})|\, r_{\boldsymbol{n}},
\end{align}
in which we have gotten rid of the supremum on $\mathbb R^d$. 
\begin{remark}
Set
\[
\mathscr A^{(1)}_{\varepsilon}=\left\{\sum_{\boldsymbol{n}\in\mathbb{Z}^\nu} |c(\boldsymbol{n})|\, r_{\boldsymbol{n}}>z_0\varepsilon^{-1/2}\right\}.
\]
Clearly, 
\begin{align}\label{eq:i1}
\mathscr{A}_\varepsilon^\infty\subset\mathscr A_\varepsilon^{(1)}.
\end{align}
\end{remark}

\subsubsection{Truncation}

To study the probability of $\mathscr A_\varepsilon^{(1)}$, we introduce the truncation parameter
\begin{align}\label{eq:N}
N=\varepsilon^{-\frac{1}{\nu}+\mu_1},
\qquad
0<\mu_1\ll\frac{1}{\nu}.
\end{align}

We define the truncated lattice \(\Lambda_N\subset\mathbb Z^\nu\) by
\begin{align*}
\Lambda_N
&=\left\{\boldsymbol n\in\mathbb Z^\nu:\ |\boldsymbol n|\le N\right\} \\
&=\left\{
(n_1,\ldots,n_d)\in\prod_{j=1}^{d}\mathbb Z^{\nu_j}
:\ |n_j|\le N,\quad j=1,\ldots,d
\right\} \\
&=\left\{
(n_{1,1},\ldots,n_{1,\nu_1};\,\ldots;\,n_{d,1},\ldots,n_{d,\nu_d})
\in\prod_{j=1}^{d}\mathbb Z^{\nu_j}
:\ |n_{j,j'}|\le N,\;
j'=1,\ldots,\nu_j,\;
j=1,\ldots,d
\right\}.
\end{align*}

Using the truncation set \(\Lambda_N\), we decompose the right-hand side of \eqref{eq:cr} into a principal part \(\mathscr S_N\) and a remainder term \(\mathscr R_N\):
\begin{equation}\label{eq:sr}
\sum_{\boldsymbol n\in\mathbb Z^\nu}
|c(\boldsymbol n)|\,r_{\boldsymbol n}
=
\mathscr S_N+\mathscr R_N,
\end{equation}
where
\begin{equation*}
\mathscr S_N
=
\sum_{\boldsymbol n\in\Lambda_N}
|c(\boldsymbol n)|\,r_{\boldsymbol n},
\qquad
\mathscr R_N
=
\sum_{\boldsymbol n\notin\Lambda_N}
|c(\boldsymbol n)|\,r_{\boldsymbol n}.
\end{equation*}

\begin{remark}
With the above notation, the event \(\mathscr A_{\varepsilon}^{(1)}\) can be expressed as
\begin{align}\label{eq:a1}
\mathscr A_{\varepsilon}^{(1)}
=
\left\{
\mathscr S_N+\mathscr R_N
>
z_0\,\varepsilon^{-1/2}
\right\}.
\end{align}
\end{remark}

\subsubsection{Analysis of \(\mathscr R_N\)}

\begin{lemma}[Estimates of \(\mathscr R_N\)]\label{lemma:vanishing}
Let \(N\sim\varepsilon^{-\frac1\nu+\mu_1}\) be defined by \eqref{eq:N}, and let \(\delta\) be as in \autoref{cor:polynomial_decay}. Then, for every \(\omega\notin\Omega_\delta\),
\begin{align}\label{rne}
\mathscr R_N
=
\mathcal O\!\left(
\varepsilon^{-\frac12+\frac{\mu_2}2}
\right).
\end{align}
\end{lemma}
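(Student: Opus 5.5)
The plan is to bound $\mathscr R_N$ deterministically on the good set, using the almost sure decay of \autoref{cor:polynomial_decay}. Since $r_{\boldsymbol n}=|g_{\boldsymbol n}|$, estimate \eqref{eq:decayall} gives, for every $\omega\notin\Omega_\delta$ with $\delta\sim\varepsilon^{1+\eta}$,
\[
|c(\boldsymbol n)|\,r_{\boldsymbol n}\le \varepsilon^{-\frac12-\frac\eta2}\tnorm{\boldsymbol n}_{-(\rho-\kappa)} .
\]
Hence
\[
\mathscr R_N\le \varepsilon^{-\frac12-\frac\eta2}\sum_{\boldsymbol n\notin\Lambda_N}\tnorm{\boldsymbol n}_{-(\rho-\kappa)} ,
\]
and the whole task reduces to a deterministic tail estimate of this lattice sum.

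\textbf{Tail sum.} For $\boldsymbol n\notin\Lambda_N$ at least one coordinate satisfies $|n_{j,j'}|>N$. A union bound over the $\nu$ coordinates and the product structure of $\tnorm{\cdot}_{-(\rho-\kappa)}$ bound the tail by
\[
\sum_{(j,j')}\Bigl(\sum_{|n|>N}(1+|n|)^{-(\rho_{j,j'}-\kappa_{j,j'})}\Bigr)\prod_{(k,k')\neq(j,j')}\sum_{n\in\mathbb Z}(1+|n|)^{-(\rho_{k,k'}-\kappa_{k,k'})}.
\]
Each full one-dimensional sum is finite because $\rho-\kappa>2>1$. Each tail sum is $\lesssim N^{-(\rho_{j,j'}-\kappa_{j,j'}-1)}$.

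This crude bound only gives decay $N^{-\min(\rho_{j,j'}-\kappa_{j,j'}-1)}$. The hypothesis \eqref{eq:delta}, however, is phrased with $\eta_2=\sum(\rho_{j,j'}-\kappa_{j,j'}-1)$. To reach an exponent comparable to $N^{-\eta_2}$, I would split more cleverly, using that the individual exponents are large. Alternatively, and more simply, I would observe that the cube-tail is controlled by the smallest exponent, and that the paper's $\eta_2$ condition is the stated design choice. I would then aim to show
\[
\sum_{\boldsymbol n\notin\Lambda_N}\tnorm{\boldsymbol n}_{-(\rho-\kappa)}\lesssim N^{-2\eta_2}
\]
or a similar bound, possibly through a bound by a product over directions, $\prod_{j,j'}N^{-(\rho_{j,j'}-\kappa_{j,j'}-1)}$, in the regime intended by the authors. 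Matching the exact power of $N$ to the hypothesis is the step I expect to be the main obstacle. If only the min-exponent bound holds, the conclusion still follows provided $\eta$ is small relative to $\frac{2}{\nu}\min(\rho-\kappa-1)$, and I would state this adjustment explicitly.

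\textbf{Conclusion.} Insert $N=\varepsilon^{-\frac1\nu+\mu_1}$, so that $N^{-a}=\varepsilon^{a(\frac1\nu-\mu_1)}$. With the exponent $a$ obtained above (targeting $a=2\eta_2$), this gives
\[
\mathscr R_N\lesssim \varepsilon^{-\frac12-\frac\eta2+a(\frac1\nu-\mu_1)} .
\]
The condition $\eta<2(\frac1\nu-\mu_1)\eta_2-\mu_2$ from \eqref{eq:delta} is exactly what makes
\[
-\tfrac\eta2+a\bigl(\tfrac1\nu-\mu_1\bigr)\ge\tfrac{\mu_2}{2}
\]
when $a=\eta_2$ is paired with the factor $2$ in the hypothesis. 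This yields $\mathscr R_N=\mathcal O(\varepsilon^{-\frac12+\frac{\mu_2}2})$, uniformly in $\omega\notin\Omega_\delta$, as claimed in \eqref{rne}.
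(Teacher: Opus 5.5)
\textbf{There is a gap, and it cannot be closed as you intend.} Your skeleton matches the paper's. You bound $|c(\boldsymbol n)|r_{\boldsymbol n}\le\delta^{-1/2}\tnorm{\boldsymbol n}_{-(\rho-\kappa)}$ on $\Omega_\delta^c$, estimate the lattice tail over $\mathbb Z^\nu\setminus\Lambda_N$, and insert $N=\varepsilon^{-\frac1\nu+\mu_1}$ with \eqref{eq:delta}. The gap is the step you yourself flag.

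The bound $\sum_{\boldsymbol n\notin\Lambda_N}\tnorm{\boldsymbol n}_{-(\rho-\kappa)}\lesssim N^{-\eta_2}$ is false, and $N^{-2\eta_2}$ is even further out of reach.
\begin{itemize}
\item Recall $\nu>d\ge 1$, so $\nu\ge2$.
\item Your union bound is sharp. Restrict to $\boldsymbol n$ whose only nonzero coordinate is $n_{j,j'}$, with $N<|n_{j,j'}|\le 2N$. This alone contributes $\gtrsim N^{-(\rho_{j,j'}-\kappa_{j,j'}-1)}$. Hence the tail is $\asymp N^{-\min_{j,j'}(\rho_{j,j'}-\kappa_{j,j'}-1)}$, and no cleverer splitting can improve it.
\item The product $\prod_{j,j'}\sum_{|n_{j,j'}|>N}(\cdots)$ you contemplate is the sum over the set where \emph{every} coordinate exceeds $N$. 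That is a strict subset of $\mathbb Z^\nu\setminus\Lambda_N$, where only \emph{some} coordinate exceeds $N$.
\item The paper's own proof gets $N^{-\eta_2}$ through exactly this invalid factorization. Following it will not repair your argument.
\item Your bookkeeping in the last paragraph is also off. The factor $2$ in \eqref{eq:delta} comes from the square root in $\delta^{-1/2}=\varepsilon^{-\frac12-\frac\eta2}$. The exponent matching the hypothesis is therefore $a=\eta_2$; $a=2\eta_2$ is neither needed nor attainable.
\end{itemize}

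\textbf{What is provable is your fallback, and it should be the proof rather than a contingency.} On $\Omega_\delta^c$,
\[
\mathscr R_N\lesssim \varepsilon^{-\frac12-\frac\eta2+(\frac1\nu-\mu_1)\min_{j,j'}(\rho_{j,j'}-\kappa_{j,j'}-1)},
\]
which is $\mathcal O(\varepsilon^{-\frac12+\frac{\mu_2}2})$ provided $\eta\le 2(\frac1\nu-\mu_1)\min_{j,j'}(\rho_{j,j'}-\kappa_{j,j'}-1)-\mu_2$.

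This is a genuine strengthening of \eqref{eq:delta}, not a matter of presentation. Take $c(\boldsymbol n)=\tnorm{\boldsymbol n}_{-\rho}$, and $\omega\notin\Omega_\delta$ with $|g_{\boldsymbol n}|\ge\frac12\delta^{-1/2}\tnorm{\boldsymbol n}_\kappa$ on the axis modes above; this event has positive probability. The lower bound then shows that the uniform-in-$\omega$ estimate \eqref{rne} fails whenever $\eta$ lies strictly between $2(\frac1\nu-\mu_1)\min_{j,j'}(\rho_{j,j'}-\kappa_{j,j'}-1)-\mu_2$ and $2(\frac1\nu-\mu_1)\eta_2-\mu_2$. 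Since $\nu\ge2$ gives $\eta_2>\min_{j,j'}(\rho_{j,j'}-\kappa_{j,j'}-1)$, \eqref{eq:delta} permits such $\eta$ once $\eta_3$ (equivalently $\alpha$) is large.

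The corrected condition is harmless in context. It follows from $\eta<\eta_3$ as soon as $\rho-\kappa$ is large enough that $2(\frac1\nu-\mu_1)\min_{j,j'}(\rho_{j,j'}-\kappa_{j,j'}-1)-\mu_2\ge\eta_3$. This is consistent with the paper's standing assumption that $\rho$ be large.
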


\begin{proof}
Fix \(\omega\notin\Omega_\delta\). By Corollary~\ref{cor:polynomial_decay},
\[
|c(\boldsymbol n)|\,r_{\boldsymbol n}(\omega)
=
|c(\boldsymbol n)g_{\boldsymbol n}(\omega)|
\le
\delta^{-1/2}
\tnorm{\boldsymbol n}_{-(\rho-\kappa)}.
\]
Hence
\begin{align*}
\mathscr R_N
&=
\sum_{|\boldsymbol n|>N}
|c(\boldsymbol n)|\,r_{\boldsymbol n}(\omega) \\
&\le
\delta^{-1/2}
\sum_{|\boldsymbol n|>N}
\tnorm{\boldsymbol n}_{-(\rho-\kappa)} \\
&\lesssim
\delta^{-1/2}
\prod_{j=1}^{d}\prod_{j'=1}^{\nu_j}
\sum_{|n_{j,j'}|>N}
(1+|n_{j,j'}|)^{-(\rho_{j,j'}-\kappa_{j,j'})}.
\end{align*}

Since \(\rho_{j,j'}-\kappa_{j,j'}>2\), 
\[
\sum_{|n_{j,j'}|>N}
(1+|n_{j,j'}|)^{-(\rho_{j,j'}-\kappa_{j,j'})}
\lesssim
N^{-(\rho_{j,j'}-\kappa_{j,j'}-1)}.
\]
Therefore
\[
\mathscr R_N
\lesssim
\delta^{-1/2}
N^{-\eta_2},
\]
where $\eta_2$ is defined by \eqref{eta2}. 

Using \(N\sim\varepsilon^{-\frac1\nu+\mu_1}\), we obtain
\[
\mathscr R_N
\lesssim
\delta^{-1/2}
\varepsilon^{\left(\frac1\nu-\mu_1\right)
\eta_2}.
\]

By assumption \eqref{eq:delta} on \(\delta\),
\[
\delta^{-1/2}
\le
\varepsilon^{-\frac12
-\left(\frac1\nu-\mu_1\right)
\eta_2
+\frac{\mu_2}2}.
\]
Combining the above estimates, we conclude that
\[
\mathscr R_N
\lesssim
\varepsilon^{-\frac12+\frac{\mu_2}2},
\]
which proves the claim.
\end{proof}

\begin{remark}
By Lemma~\ref{lemma:vanishing}, there exists a constant \(C>0\) such that
\[
\mathscr R_N
\le
C\,\varepsilon^{-\frac12+\frac{\mu_2}2}.
\]
Define
\[
\widetilde{\Omega}
=
\left\{
\mathscr R_N
\le
C\,\varepsilon^{-\frac12+\frac{\mu_2}2}
\right\},
\]
and
\[
\mathscr A^{(2)}_\varepsilon
=
\left\{
\mathscr S_N
>
\bigl(z_0-C\varepsilon^{\frac{\mu_2}{2}}\bigr)\varepsilon^{-\frac{1}{2}}
\right\}.
\]
Then
\begin{align}\label{eq:i2}
\mathscr A_{\varepsilon}^{(1)}
\cap
\widetilde{\Omega}
\subset
\mathscr A_\varepsilon^{(2)},
\end{align}
and
\begin{align}\label{eq:io}
\Omega_\delta^{\,c}
\subset
\widetilde{\Omega}.
\end{align}
\end{remark}
\subsubsection{Analysis of $\mathscr S_N$.}
Here we analyze the principal part \(\mathscr{S}_N\), which provides the dominant contribution to the upper bound. The analysis reduces to studying a sum of independent random variables after separating deterministic and random components.

Clearly, 
\[
\mathscr A^{(2)}_\varepsilon=\left\{\mathscr S_N^2>(z_0-C\varepsilon^{\frac{\mu_2}{2}})^2
\varepsilon^{-1}\right\}. 
\]
Applying H\"older's inequality, we obtain
\begin{align*}
\mathscr S_N^2
&\leq \sum_{\boldsymbol n\in\Lambda_N}\frac{1}{2}|c(\boldsymbol n)|^2 \cdot \sum_{\boldsymbol n\in\Lambda_N}2r_{\boldsymbol n}^2 \nonumber\\
&\leq \left(\sum_{\boldsymbol n\in\mathbb Z^\nu}\frac{1}{2}|c(\boldsymbol n)|^2\right)\cdot \left(\sum_{\boldsymbol n\in\Lambda_N}2r_{\boldsymbol n}^2\right).
\end{align*}
\begin{remark}
Set
\begin{align*}
\mathscr A_\varepsilon^{(3)}&=
\left\{\xi_N>\mathfrak{I}_\varepsilon\varepsilon^{-1}\right\},
\end{align*}
where
\begin{align}
\xi_N&=\sum_{\boldsymbol n\in\Lambda_N}2r_{\boldsymbol n}^2,\nonumber\\
\mathfrak{I}_\varepsilon
&= \frac{\big(z_0 - C\varepsilon^{\frac{\mu_2}{2}}\big)^2}{\frac{1}{2}\sum_{\boldsymbol{n}\in\mathbb{Z}^\nu} |c(\boldsymbol{n})|^2}.\label{rate}
\end{align}
Clearly, 
\begin{align}\label{eq:i3}
\mathscr A^{(2)}_\varepsilon\subset\mathscr A^{(3)}_{\varepsilon}.
\end{align}
\end{remark}

\begin{lemma}[LDP for $\mathscr A^{(3)}_{\varepsilon}$]\label{ldp3}
Let \(N \sim \varepsilon^{-\frac1\nu+\mu_1}\) be defined in \eqref{eq:N}, we have
\[
\lim_{\varepsilon\to 0^+}\varepsilon\log\mathbb P(\mathscr A^{(3)}_{\varepsilon})=-\frac{z_0^2}{\sum_{\boldsymbol{n}\in\mathbb{Z}^\nu} |c(\boldsymbol{n})|^2}.
\]
\end{lemma}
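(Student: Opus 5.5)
The plan is to identify the law of $\xi_N$ exactly and then bound its tail from both sides; both bounds turn out to be elementary. Since $2r_{\boldsymbol n}^2 \overset{\mathrm{iid}}{\sim}\mathscr E(1/2)=\chi^2_2$, the variable $\xi_N$ is a $\chi^2$ variable with $2M$ degrees of freedom (a Gamma$(M,1/2)$ law), where
\[
M=|\Lambda_N|=(2N+1)^{\nu}\sim \varepsilon^{-1+\nu\mu_1}.
\]
Set $x_\varepsilon=\mathfrak I_\varepsilon\varepsilon^{-1}$. By \eqref{rate},
\[
\mathfrak I_\varepsilon\to \mathfrak I_0=\frac{2z_0^2}{\sum_{\boldsymbol n}|c(\boldsymbol n)|^2}.
\]
The target rate is therefore $-\mathfrak I_0/2$, which is exactly the exponential decay rate $e^{-x/2}$ of a $\chi^2$ tail. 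The key structural fact is that the number of summands $M$ is $o(x_\varepsilon)$, because $\mu_1>0$. This is what makes the polynomial prefactor of the Gamma tail negligible on the large deviations scale.

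\textbf{Lower bound.} Since all summands are nonnegative, $\xi_N\ge 2r_{\boldsymbol 0}^2$, and $\boldsymbol 0\in\Lambda_N$. Hence
\[
\mathbb P(\mathscr A^{(3)}_\varepsilon)\ge \mathbb P\bigl(2r_{\boldsymbol 0}^2>x_\varepsilon\bigr)=e^{-x_\varepsilon/2}.
\]
This gives $\varepsilon\log\mathbb P(\mathscr A^{(3)}_\varepsilon)\ge -\mathfrak I_\varepsilon/2$. Since $C\varepsilon^{\mu_2/2}\to0$, the right-hand side tends to $-z_0^2/\sum_{\boldsymbol n}|c(\boldsymbol n)|^2$, which gives the $\liminf$ bound.

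\textbf{Upper bound (Chernoff).} For $0<\lambda<1/2$, independence gives $\mathbb E e^{\lambda\xi_N}=(1-2\lambda)^{-M}$. Markov's inequality then yields
\[
\mathbb P(\xi_N>x_\varepsilon)\le (1-2\lambda)^{-M}e^{-\lambda x_\varepsilon}.
\]
I would choose the (near-)optimal value $1-2\lambda=2M/x_\varepsilon$, which lies in $(0,1)$ for small $\varepsilon$ since $M=o(x_\varepsilon)$. This gives
\[
\log\mathbb P(\mathscr A^{(3)}_\varepsilon)\le -\frac{x_\varepsilon}{2}+M+M\log\frac{x_\varepsilon}{2M}.
\]
Multiplying by $\varepsilon$, the first term gives $-\mathfrak I_\varepsilon/2$. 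The remaining terms are
\[
\mathcal O\bigl(\varepsilon M\log(\varepsilon^{-1})\bigr)=\mathcal O\bigl(\varepsilon^{\nu\mu_1}\log\varepsilon^{-1}\bigr)\to0,
\]
since $x_\varepsilon/M\sim\varepsilon^{-\nu\mu_1}$. This yields the matching $\limsup$ bound.

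The only delicate point is this last step: checking that the entropy-like correction $M\log(x_\varepsilon/M)$ is $o(\varepsilon^{-1})$. This is precisely where the choice $N=\varepsilon^{-1/\nu+\mu_1}$ with $\mu_1>0$ in \eqref{eq:N} is needed. If $N$ were of order $\varepsilon^{-1/\nu}$ or larger, $M$ would be comparable to $x_\varepsilon$ and the prefactor would shift the rate. Combining the two bounds proves the lemma.
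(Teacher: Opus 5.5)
Your proposal is correct and follows essentially the same route as the paper: a single-mode exponential tail $\mathbb P(2r_{\boldsymbol n}^2>\mathfrak I_\varepsilon\varepsilon^{-1})=e^{-\mathfrak I_\varepsilon/(2\varepsilon)}$ for the lower bound, and the Chernoff bound with the same optimizer $1-2\lambda^*=2\varepsilon(2N+1)^\nu/\mathfrak I_\varepsilon$ for the upper bound, where the correction terms vanish because $\varepsilon(2N+1)^\nu\sim\varepsilon^{\nu\mu_1}\to0$. You are slightly more precise than the paper in two places: you explicitly take the mode $\boldsymbol 0\in\Lambda_N$, and you give the error rate $\mathcal O(\varepsilon^{\nu\mu_1}\log\varepsilon^{-1})$.
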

\begin{proof}
{\bf Step 1. Lower bound via pointwise control}

Clearly, $\mathscr A_\varepsilon^{(3)}\supset\{2r_{\boldsymbol{n}}^2 > \mathfrak{I}_\varepsilon \varepsilon^{-1}\}$, where $2r_{\boldsymbol{n}}^2\sim\mathscr{E}(1/2)$. Hence we have
\begin{align*}
\mathbb{P}(\mathscr A_\varepsilon^{(3)})
&\geq \mathbb P(2r_{\boldsymbol{n}}^2 > \mathfrak{I}_\varepsilon \varepsilon^{-1}) \\
&= \int_{\mathfrak{I}_\varepsilon \varepsilon^{-1}}^{\infty} \frac{1}{2} e^{-\frac{1}{2}x} \, dx \\
&= e^{-\frac{1}{2} \mathfrak{I}_\varepsilon \varepsilon^{-1}}.
\end{align*}
Taking logarithms, multiplying by \(\varepsilon\), and then taking the lower limit with respect to $\varepsilon\to0^+$, we obtain that 
\[
\liminf_{\varepsilon\to0^+}\varepsilon\log\mathbb{P}(\mathscr A_\varepsilon^{(3)})\geq\liminf_{\varepsilon\to0^+}\left(-\frac{1}{2}\mathfrak{I}_\varepsilon\right)
=-\frac{z_0^2}{\sum_{\boldsymbol{n}\in\mathbb Z^\nu}|c(\boldsymbol{n})|^2}.\]

{\bf Step 2. Upper bound via Chernoff bound}

Since \(2 r_{\boldsymbol n}^2 \sim \chi^2_2 = \mathscr E(1/2)\), its moment generating function is given by
\[
\mathbb{E}\bigl[e^{\lambda \cdot 2 r_{\boldsymbol n}^2}\bigr] = (1 - 2\lambda)^{-1}, \qquad \lambda \in (0, \tfrac12).
\]
By independence and \(|\Lambda_N| = (2N+1)^\nu\), it follows that
\[
\mathbb{E}\bigl[e^{\lambda \xi_N}\bigr] = (1 - 2\lambda)^{-(2N+1)^\nu}.
\]

Applying Lemma~\ref{lemma:ccbound}, we obtain
\[
\mathbb{P}(\mathscr A_\varepsilon^{(3)}) \le (1 - 2\lambda)^{-(2N+1)^\nu} e^{-\lambda \mathfrak{I}_\varepsilon\varepsilon^{-1}}.
\]

To optimize over \(\lambda\in(0,1/2)\), set
\[
\log h(\lambda) = -(2N+1)^\nu \log(1 - 2\lambda) - \lambda \mathfrak{I}_\varepsilon\varepsilon^{-1}.
\]
Then
\[
\frac{d}{d\lambda} \log h(\lambda) = \frac{2(2N+1)^\nu}{1 - 2\lambda} - \mathfrak{I}_\varepsilon\varepsilon^{-1}.
\]
Solving \(\frac{d}{d\lambda} \log h(\lambda)=0\) yields
\[
\lambda^* = \frac{1}{2}\left(1 - \frac{2(2N+1)^\nu}{\mathfrak{I}_\varepsilon\varepsilon^{-1}}\right),
\]
which lies in \((0,1/2)\) provided that \(\mathfrak{I}_\varepsilon\varepsilon^{-1} > 2(2N+1)^\nu\). This can be achieved by choosing an appropriate scaling of \(N\) with respect to \(\varepsilon\); see \eqref{eq:N}. 

Thus we have
\[
\mathbb{P}(\mathscr A_\varepsilon^{(3)}) \le (1 - 2\lambda^*)^{-(2N+1)^\nu} e^{-\lambda^* \mathfrak{I}_\varepsilon\varepsilon^{-1}}.
\]
Taking logarithms and multiplying both sides by \(\varepsilon\) gives
\[
\varepsilon \log \mathbb{P}(\mathscr A_\varepsilon^{(3)}) 
\le -\lambda^*\mathfrak{I}_\varepsilon - \varepsilon(2N+1)^\nu \log(1-2\lambda^*).
\]
Substituting \(\lambda^*\) and simplifying,
\[
\varepsilon \log \mathbb{P}(\mathscr A_\varepsilon^{(3)}) 
\le -\frac{1}{2}\mathfrak{I}_\varepsilon + \varepsilon(2N+1)^\nu - \varepsilon(2N+1)^\nu \log\!\left(\frac{2 \varepsilon (2N+1)^\nu}{\mathfrak{I}_\varepsilon}\right).
\]

As \(\varepsilon \to 0^+\), recalling that \(N \sim \varepsilon^{-\frac{1}{\nu}+\mu}\) with \(0<\mu<1/\nu\) as in \eqref{eq:N}, we have
\[
\mathfrak{I}_\varepsilon \to \frac{z_0^2}{\frac{1}{2}\sum_{\boldsymbol{n}\in\mathbb{Z}^\nu}|c(\boldsymbol{n})|^2}, \quad
 \varepsilon(2N+1)^\nu\to 0, \quad \text{and} \quad
 \varepsilon (2N+1)^\nu\log\!\left(\frac{2 \varepsilon (2N+1)^\nu }{\mathfrak{I}_\varepsilon}\right)\to 0.
\]
These imply that
\[
\limsup_{\varepsilon\to0^+} \varepsilon \log \mathbb{P}(\mathscr A_\varepsilon^{(3)}) \le -\frac{z_0^2}{\sum_{\boldsymbol{n}\in\mathbb Z^\nu}|c(\boldsymbol{n})|^2}.
\] 
Proof of \autoref{ldp3} is complete.
\end{proof}


\subsubsection{Putting Everything Together}\label{sec:all}
It follows from \eqref{eq:i1}, \eqref{eq:a1}, \eqref{eq:i2}, \eqref{eq:io}, \eqref{eq:i3} and the law of total probability that
\begin{align*}
\mathbb P(\mathscr A_\varepsilon^\infty)\le\mathbb P(\mathscr A_\varepsilon^{(1)})
&=\mathbb P(\mathscr A_\varepsilon^{(1)}\cap\widetilde\Omega)+
\mathbb P(\mathscr A_\varepsilon^{(1)}\cap\widetilde\Omega^c)\\
&\le\mathbb P(\mathscr A_\varepsilon^{(2)})+\mathbb P(\widetilde\Omega^c)\\
&\le\mathbb P(\mathscr A_\varepsilon^{(3)})+\mathbb P(\Omega_\delta)\\
&=\mathbb P(\mathscr A_\varepsilon^{(3)})\left(1+\frac{\mathbb P(\Omega_\delta)}{\mathbb P(\mathscr A_\varepsilon^{(3)})}\right).
\end{align*}

Taking logarithms and multiplying by \(\varepsilon\), we have
\begin{align}\label{ee}
\varepsilon\log\mathbb{P}(\mathscr A_\varepsilon^\infty)\leq\varepsilon\log\mathbb{P}(\mathscr A^{(3)}_\varepsilon)+\varepsilon\log\left(1+\frac{\mathbb P(\Omega_\delta)}{\mathbb P(\mathscr A_\varepsilon^{(3)})}\right).
\end{align}
The first term satisfies a large deviation principle (see \autoref{ldp3}), namely
\[
\mathbb P(\mathscr A_\varepsilon^{(3)}) = \exp\!\left(-\frac{\mathfrak I_0}{2\varepsilon} + o(\varepsilon^{-1})\right).
\]
The second term is controlled by \autoref{thm:poly}, namely
\[
\mathbb P(\Omega_\delta) \lesssim \exp(-\delta^{-1}),\quad \delta\sim\varepsilon^{1+\eta}<\varepsilon,
\]
where \(\eta\) is specified in \autoref{cor:polynomial_decay}. Thus, 
\[
\lim_{\varepsilon \to 0^+} \varepsilon \log\!\left(1 + \frac{\mathbb P(\Omega_\delta)}{\mathbb P(\mathscr A_\varepsilon^{(3)})}\right) = 0.
\]

Taking the upper limit with respect to $\varepsilon\to0^+$ on both sides of \eqref{ee}, we have 
\begin{align*}
\limsup_{\varepsilon \to 0^+} \varepsilon \log \mathbb{P}(\mathscr A_\varepsilon^\infty)
    &\le -\frac{z_0^2}{\sum_{\boldsymbol{n}\in\mathbb{Z}^\nu} |c(\boldsymbol{n})|^2},
\end{align*}
which agrees with the right-hand side of \eqref{eq:lower}. This completes the proof of \autoref{thm:linearldp}.

\section{Well-Posedness}\label{sec:wellp}

In \autoref{sec:linearldp}, we established the linear LDP, which serves as a baseline for the nonlinear analysis. Before turning to the nonlinear case, we first derive in \autoref{sec:wellp} a suitable well-posedness result that captures the relevant time scale, along with the necessary estimates.

\begin{theorem}[Existence, Uniqueness and Fourier Decay]\label{thm:eude}

Consider the randomized quasi-periodic Cauchy problem \eqref{eq:nls}--\eqref{eq:initialdata} together with \autoref{cor:polynomial_decay}. 
Then there exists a positive time
\[
{
{T_\varepsilon = \mathcal O\left(\varepsilon^{-\alpha+1+\eta}\right)},
}
\]
where $\eta$ is chosen in accordance with \eqref{eq:delta}, and a randomized spatially quasi-periodic solution of the form
\begin{align}\label{eq:solution2}
{u}(t,\boldsymbol{x})
= \sum_{\boldsymbol{n}\in\mathbb{Z}^\nu}
{c}(t,\boldsymbol{n})
e^{\iu\langle \boldsymbol{n}\boldsymbol{\Omega}, \boldsymbol{x}\rangle}
\end{align}
defined on $[0,T_\varepsilon]$, with Fourier coefficients satisfying the uniform-in-time decay estimates
\begin{align}\label{eq:decay2}
\sup_{t\in[0,T_\varepsilon]}
|{c}(t,\boldsymbol{n})|
\lesssim {\varepsilon^{-\frac{1}{2}-\frac{\eta}{2}}}\tnorm{\boldsymbol{n}}_{-\frac{\rho-\kappa}{2}}.
\end{align}

\end{theorem}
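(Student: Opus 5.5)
We work pathwise on the good set $\omega\notin\Omega_\delta$. In Fourier variables, \eqref{eq:nls} becomes the infinite ODE system
\[
c(t,\boldsymbol n)=c(\boldsymbol n)g_{\boldsymbol n}e^{-\iu tQ(\boldsymbol n)}+\iu\varepsilon^{\alpha}\int_0^t e^{-\iu(t-s)Q(\boldsymbol n)}\sum_{\boldsymbol n_1-\boldsymbol n_2+\boldsymbol n_3=\boldsymbol n}c(s,\boldsymbol n_1)\overline{c(s,\boldsymbol n_2)}c(s,\boldsymbol n_3)\,ds,
\]
where convolution on $\mathbb Z^\nu$ is well defined because $\boldsymbol\Omega$ is rationally independent. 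We construct the solution by Picard iteration, following the combinatorial tree approach of \cite{C07,DG16,X25}. We expand $c=\sum_{k\ge0}c^{(k)}$, where $c^{(k)}$ is the sum over ternary trees with $k$ internal vertices. The $2k+1$ leaves carry the data $c(\boldsymbol n_\ell)g_{\boldsymbol n_\ell}$, and each internal vertex carries a factor $\varepsilon^\alpha$ together with a time integral. Alternatively one can phrase the argument as a contraction in the weighted space $\{c:\sup_{t\le T}\sup_{\boldsymbol n}\tnorm{\boldsymbol n}_{\frac{\rho-\kappa}{2}}|c(t,\boldsymbol n)|<\infty\}$; the tree expansion gives the same bounds and also yields uniqueness in this class.

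\textbf{Step 1: leaf bound.} By \autoref{cor:polynomial_decay}, each leaf satisfies $|c(\boldsymbol n)g_{\boldsymbol n}|\le\varepsilon^{-\frac12-\frac\eta2}\tnorm{\boldsymbol n}_{-(\rho-\kappa)}$. Taking absolute values removes all oscillatory phases, and the time integrals over a tree with $k$ internal vertices contribute at most $t^k/k!$ (ordered times), or at worst $t^k$.

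\textbf{Step 2: weighted convolution estimate.} The key estimate is a submultiplicativity bound: for $\boldsymbol n_1-\boldsymbol n_2+\boldsymbol n_3=\boldsymbol n$,
\[
\tnorm{\boldsymbol n}_{\frac{\rho-\kappa}{2}}\le \prod_{i=1}^3\tnorm{\boldsymbol n_i}_{\frac{\rho-\kappa}{2}},
\]
which holds coordinatewise since $1+|a+b|\le(1+|a|)(1+|b|)$. Therefore
\[
\sum_{\boldsymbol n_1-\boldsymbol n_2+\boldsymbol n_3=\boldsymbol n}\prod_i\tnorm{\boldsymbol n_i}_{-(\rho-\kappa)}\le\tnorm{\boldsymbol n}_{-\frac{\rho-\kappa}{2}}\Bigl(\sum_{\boldsymbol m}\tnorm{\boldsymbol m}_{-\frac{\rho-\kappa}{2}}\Bigr)^3.
\]
The last sum is finite because $\frac{\rho_{j,j'}-\kappa_{j,j'}}{2}>1$. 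Here we spend half of the decay exponent to absorb all combinatorics, which is why \eqref{eq:decay2} carries exponent $\frac{\rho-\kappa}{2}$. Iterating this over a tree only up to the root, so that internal vertices retain the full weight in the inductive hypothesis, gives
\[
|c^{(k)}(t,\boldsymbol n)|\le \#\{\text{trees}\}\,C^{k}\,(\varepsilon^\alpha t)^k\,\varepsilon^{-\frac{2k+1}{2}(1+\eta)}\tnorm{\boldsymbol n}_{-\frac{\rho-\kappa}{2}}.
\]
To make the induction close, the inductive statement must use a weight compatible with this loss; the cleanest choice is to use weight $\frac{\rho-\kappa}{2}$ throughout and the submultiplicativity $\tnorm{\cdot}_{-\frac{\rho-\kappa}{2}}$ on three factors.

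\textbf{Step 3: summing the series.} The number of ternary trees with $k$ internal vertices is at most $C^k$ (Catalan growth), and the leaf permutations are absorbed into $C^k$. The series is therefore dominated by a geometric series
\[
\varepsilon^{-\frac{1+\eta}{2}}\sum_k\bigl(C\varepsilon^{\alpha-1-\eta}t\bigr)^k,
\]
which converges uniformly for $t\le T_\varepsilon=c_0\varepsilon^{-\alpha+1+\eta}$ with $c_0$ small. This yields existence and \eqref{eq:decay2} with the $k=0$ term dominant. Since $\tnorm{\cdot}_{-\frac{\rho-\kappa}{2}}\in\ell^1$, the series \eqref{eq:solution2} converges absolutely and uniformly in $(t,\boldsymbol x)$. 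Termwise differentiation, justified by the extra decay, shows that it solves \eqref{eq:nls} classically. Uniqueness follows by applying the same trilinear estimate to the difference of two solutions in the weighted sup space and using Gronwall, or the contraction, on $[0,T_\varepsilon]$.

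\textbf{Main obstacle.} The delicate point is the bookkeeping in Step 2: showing that repeated convolutions along a tree of depth up to $k$ lose only a factor $C^k$ and not a factorial or depth-dependent weight. I would first try the inductive form above, where the hypothesis is $|c^{(j)}|\le A_j\tnorm{\boldsymbol n}_{-\frac{\rho-\kappa}{2}}$ with leaves bounded by the stronger weight. This gives the recursion $A_k\le C\varepsilon^\alpha t\sum_{j_1+j_2+j_3=k-1}A_{j_1}A_{j_2}A_{j_3}$, a Catalan-type recursion whose generating function converges for $\varepsilon^{\alpha}tA_0^2$ small. That smallness condition is exactly $t\lesssim\varepsilon^{-\alpha+1+\eta}$.
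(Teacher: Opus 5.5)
Your main route is the paper's proof. Read as a single weight split per tree, the argument works: the root momentum is the alternating sum of the $2k+1$ leaf momenta, so half of the leaf weights is dominated by $\tnorm{\boldsymbol n}_{-\frac{\rho-\kappa}{2}}$, and the other half is summed freely. That is exactly Lemma~\ref{size1}:
\begin{itemize}
\item the Bernoulli step $\prod_j(1+|m^j_{s,q}|)\ge 1+|\sum_j(-1)^{j-1}m^j_{s,q}|$;
\item the free sum producing $\mathfrak B^{2\sigma\nu}$;
\item the time factor $t^{\ell}/\mathfrak D$;
\item the ternary-tree count with radius $4/27$, which gives $T_\varepsilon=\frac{4}{27}\mathfrak B^{-2\nu}\varepsilon^{-\alpha+1+\eta}$.
\end{itemize}
Grading trees by number of internal vertices rather than by Picard depth is harmless, since the paper's $c_k$ is the sub-sum over trees of depth at most $k$. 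Your version has one small advantage: it gives existence directly as an absolutely convergent series, which the paper leaves to the cited works.

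Two points in your write-up need fixing.

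First, the inductive variant you say you would try first does not close as you justify it. Take the hypothesis $|c^{(j)}|\le A_j\tnorm{\boldsymbol n}_{-w}$ with $w=\frac{\rho-\kappa}{2}$. Submultiplicativity only bounds each summand of $\sum_{\boldsymbol n_1-\boldsymbol n_2+\boldsymbol n_3=\boldsymbol n}\prod_i\tnorm{\boldsymbol n_i}_{-w}$ by $\tnorm{\boldsymbol n}_{-w}$, which leaves a divergent free sum. Splitting again costs half the exponent at every generation. To get the recursion $A_k\le C\varepsilon^\alpha t\sum A_{j_1}A_{j_2}A_{j_3}$ you need the convolution-algebra estimate $\sum_{\boldsymbol n_1-\boldsymbol n_2+\boldsymbol n_3=\boldsymbol n}\prod_i\tnorm{\boldsymbol n_i}_{-w}\lesssim\tnorm{\boldsymbol n}_{-w}$, valid when every $w_{j,j'}>1$. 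It is proved coordinatewise: the sum factorizes over coordinates, and in each one some $|a_i|\ge|a|/3$, giving the constant $3^{w+1}\bigl(\sum_b(1+|b|)^{-w}\bigr)^2$. Your uniqueness argument in the weighted sup space has the same issue. Either use this algebra estimate, or simply run Gronwall in $\ell^1$ of the Fourier coefficients.

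Second, the nested time integrals over a tree are bounded by $t^k/\mathfrak D$, where $\mathfrak D$ is the tree factorial, not by $t^k/k!$. For bushy trees $\mathfrak D<k!$. Since you only use the bound $t^k$, this does not affect your conclusion.
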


The proof of \autoref{thm:eude} is based on the strategy developed in \cite{C07,DG16,X25}. 
We outline the main steps, emphasizing the key modifications.

\subsection{The Lattice System and Picard Iteration}

It follows from the orthogonality of 
$\{e^{\mathrm{i}\langle\boldsymbol{n}\Omega,\boldsymbol{x}\rangle}\}_{\boldsymbol{n}\in\mathbb Z^\nu}$
(see \cite[Lemma 1.1]{X25}) that, in Fourier space, \eqref{eq:nls} reads
\begin{align}\label{eq:fourier_nls}
\partial_t{c}(t,\boldsymbol{n}) 
= 
-\mathrm{i} Q(\boldsymbol{n}){c}(t,\boldsymbol{n})
+ \mathrm{i}{\varepsilon^\alpha} 
\sum_{\substack{\boldsymbol{n}^j \in \mathbb Z^\nu,\, j=1,2,3\\ 
\boldsymbol{n}^1 - \boldsymbol{n}^2 + \boldsymbol{n}^3 = \boldsymbol{n}}}
\prod_{j=1}^3 \bigl\{{c}(t,\boldsymbol{n}^j)\bigr\}^{\ast^{[j-1]}}.
\end{align}

The initial condition in Fourier space is given by
\begin{align}\label{eq:initialdata_fourier}
{c}(0,\boldsymbol{n})={c}(\boldsymbol{n})g_{\boldsymbol{n}}.
\end{align}

For each $\boldsymbol{n} \in \mathbb{Z}^\nu$, the Duhamel formulation reads
\begin{align}\label{eq:duhamel_c}
{c}(t,\boldsymbol{n})
= e^{-\iu Q(\boldsymbol{n})t}{c}(\boldsymbol{n})g_{\boldsymbol{n}}
+ \iu{\varepsilon^\alpha}  \int_0^t e^{-\iu Q(\boldsymbol{n})(t-s)}
\sum_{\substack{\boldsymbol{n}^j \in \mathbb Z^\nu,\, j=1,2,3\\ 
\boldsymbol{n}^1 - \boldsymbol{n}^2 + \boldsymbol{n}^3 = \boldsymbol{n}}}
\prod_{j=1}^{3}
\bigl\{{c}(s,\boldsymbol{n}^j)\bigr\}^{\ast^{[j-1]}}
\,\mathrm{d}s.
\end{align}

Define the Picard iteration $\{{c}_k(t,\boldsymbol{n})\}_{k\ge0}$ by
\begin{align}\label{eq:initialguess}
{c}_0(t,\boldsymbol{n})
=
e^{-\iu Q(\boldsymbol{n})t}{c}(\boldsymbol{n})g_{\boldsymbol{n}},
\end{align}
and for $k\ge1$,
\begin{align}\label{eq:picard}
{c}_k(t,\boldsymbol{n})
=
{c}_0(t,\boldsymbol{n})
+ \iu{\varepsilon^\alpha}  \int_0^t e^{-\iu Q(\boldsymbol{n})(t-s)}
\sum_{\substack{\boldsymbol{n}^j \in \mathbb Z^\nu,\, j=1,2,3\\ 
\boldsymbol{n}^1 - \boldsymbol{n}^2 + \boldsymbol{n}^3 = \boldsymbol{n}}}
\prod_{j=1}^{3}
\bigl\{{c}_{k-1}(s,\boldsymbol{n}^j)\bigr\}^{\ast^{[j-1]}}
\,\mathrm{d}s.
\end{align}

\subsection{Tree Structure of the Picard Sequence}

In this subsection, we represent the Picard iteration using tree expansions, 
in order to systematically encode the combinatorial structure of the Duhamel terms. 
This representation allows us to track the multilinear interactions arising from the nonlinear evolution; see \autoref{prop:tree}.

We begin by illustrating the decomposition of the linear and nonlinear contributions 
(see \autoref{fig:diagramlinear} and \autoref{fig:diagramduhamel}). We then briefly recall the combinatorial formalism developed in \cite{DG16,DLX24,X25}, which provides the basis for the tree representation.
\begin{figure}[htpb!]
\centering
\begin{tikzpicture}[scale=0.9,transform shape]
\node at (-2.7,2.6) {$\mathbb R^d\ni\boldsymbol{x}=(x_1,\cdots,x_d)$};
\node at (-0.5,2.5) {$\mapsto$};
\node at (1.65,2.5) {$\mathbb Z^\nu=\mathbb Z^{\nu_1}\times\cdots\times\mathbb Z^{\nu_d}$};
\node at (0,2) {\rotatebox{90}{$\in$}};
\node at (1.29,1.5) {$\boldsymbol{n}=(n_{1},\cdots,n_d)$};
\node at (0,1) {\color{green}$\bullet$};
\draw [dashed] (-0.5,1) -- (1.1,1);
\node at (1.75,1) {${c}_k(t,\boldsymbol{n})$};
\draw (0,0) -- (0,1);
\node at (0,0) {$\bullet$};
\draw [dashed] (-3.5,0) -- (1.1,0);
\node at (0,-0.35) {$(n_{1},\cdots,n_d)$};
\node at (1.75,0) {${c}_0(t,\boldsymbol{n})$};

\node at (-5.3,0.04) {$\mathrm D^{(k,0)}\ni\spadesuit^{(k)}$};
\node at (-3.9,0.04) {\rotatebox{180}{$\mapsto$}};

\node at (-2.5,-1.5) {$\blacksquare$};
\draw (-0.6,-0.6)--(-2.5,-1.5);

\node at (2.5,-1.5) {$\blacksquare$};
\draw (0.6,-0.6)--(2.5,-1.5);

\node at (-1.6,-2) {$\widetilde\spadesuit_{x_1}^{(k)}\in\mathscr D_{x_1}^{(k,0)}$};

\node at (3.42,-2) {$\widetilde\spadesuit_{x_d}^{(k)}\in\mathscr D_{x_d}^{(k,0)}$};

\node at (0,-3.5) {$\blacksquare$};
\draw (-2.5,-2.25)--(0,-3.5);
\draw (2.5,-2.25)--(0,-3.5);

\node at (-4.55,-1.9) {$\iota_{x_1}$};
\node at (-4.7,-3.9) {$\iota$};
\node at (-4.5,-5.1) {$\iota_{x_d}$};

\node at (0.92,-4) {$\widetilde\spadesuit^{(k)}\in\mathscr D^{(k,0)}$};

\coordinate (A) at (-4.8,-0.2);
\coordinate (B) at (-4.8,-2.1);
\coordinate (C) at (-2.7,-2.1);

\coordinate (D) at (-4.8,-4.05);
\coordinate (E) at (-0.2,-4.05);

\coordinate (F) at (-4.8,-5.3);
\coordinate (G) at (2.52,-5.3);
\coordinate (H) at (2.52,-2.3);

\draw [blue] (A)--(B);
\draw [->,blue] (B)--(C);
\draw [blue] (A)--(D);
\draw [->,blue] (D)--(E);
\draw [blue] (A)--(F);
\draw [blue] (F)--(G);
\draw [->,blue] (G)--(H);

\end{tikzpicture}
\caption{Diagram of the linear component ${\mathfrak c}_0(t,\boldsymbol{n})$. 
}
\label{fig:diagramlinear}
\end{figure}

\begin{figure}[htpb!]
\centering
\begin{tikzpicture}[scale=0.7,transform shape]

\node at (5.5,9.3) {$\mathbb R^d\ni\boldsymbol{x}$};
\node at (6.5,9.2) {$\mapsto$};
\node at (7.1,9.2) {$\mathbb{Z}^{\nu}$};
\node at (7,8.8) {\rotatebox{90}{$\in$}};
\node at (7,8.4) {$\boldsymbol{n}$};
\node at (7,8) {\color{green}$\bullet$};
\draw [dashed] (6.5,8)--(7.8,8);
\node at (8.5,8) {${c}_k(t,\boldsymbol{n})$};

\node at (6.8,7.2) {$\boldsymbol{n}$};
\node at (7,7) {\color{red}$\bullet$};
\draw [dashed] (6.5,7)--(7.8,7);
\node at (9,7) {$({c}_k-{c}_0)(t,\boldsymbol{n})$};

\node at (-0.21,6.08) {$\mathrm{D}^{(k,\gamma^{(k)})}\ni\spadesuit^{(k)}$};
\node at (1.2,6.02) {\rotatebox{-180}{$\mapsto$}};
\node at (2,6) {$\bullet$};
\node at (2.8,5.7) {$\boldsymbol{n}^1\in\mathbb{Z}^{\nu}$};
\node at (7,6) {\color{red}$\bullet$};
\node at (7.8,5.7) {$\boldsymbol{n}^2\in\mathbb{Z}^{\nu}$};
\node at (12,6) {$\bullet$};
\node at (12.8,5.7) {$\boldsymbol{n}^3\in\mathbb{Z}^{\nu}$};
\draw [dashed] (1.5,6)--(12.5,6);

\node at (2,5) {$\bullet$};
\node at (3.8,4.65) {$\spadesuit^{(k-1)}_1\in\mathrm{D}^{(k-1,\gamma_1^{(k-1)})}$};
\node at (7,5) {\color{red}$\bullet$};
\node at (8.8,4.65) {$\spadesuit^{(k-1)}_2\in\mathrm{D}^{(k-1,\gamma_2^{(k-1)})}$};
\node at (12,5) {$\bullet$};
\node at (13.8,4.65) {$\spadesuit^{(k-1)}_3\in\mathrm{D}^{(k-1,\gamma_3^{(k-1)})}$};
\draw [dashed] (1.5,5)--(12.5,5);

\node at (2,4) {$\bullet$};
\node at (2.75,3.7) {$(\mathbb{Z}^{\nu})^{2\sigma(\gamma_1^{(k-1)})}$};
\node at (2,3.2) {\rotatebox{90}{$\in$}};
\node at (3,2.5) {$(n^{1,j})_{1\le j\le 2\sigma(\gamma_1^{(k-1)})}$};
\node at (2,2) {\rotatebox{90}{$=$}};
\node at (3.5,1.3) {$\bigl((n_1^{1,j},\cdots,n_d^{1,j})\bigr)_{1\le j\le 2\sigma(\gamma_1^{(k-1)})}$};

\node at (7,4) {\color{red}$\bullet$};
\node at (7.75,3.7) {$(\mathbb{Z}^{\nu})^{2\sigma(\gamma_2^{(k-1)})}$};
\node at (7,3.2) {\rotatebox{90}{$\in$}};
\node at (8,2.5) {$(n^{2,j})_{1\le j\le 2\sigma(\gamma_1^{(k-1)})}$};
\node at (7,2) {\rotatebox{90}{$=$}};
\node at (8.5,1.3) {$\bigl((n_1^{2,j},\cdots,n_d^{2,j})\bigr)_{1\le j\le 2\sigma(\gamma_2^{(k-1)})}$};

\node at (12,4) {$\bullet$};
\node at (12.75,3.7) {$(\mathbb{Z}^{\nu})^{2\sigma(\gamma_3^{(k-1)})}$};
\node at (12,3.2) {\rotatebox{90}{$\in$}};
\node at (13,2.5) {$(n^{2,j})_{1\le j\le 2\sigma(\gamma_1^{(k-1)})}$};
\node at (12,2) {\rotatebox{90}{$=$}};
\node at (14,1.3) {$\bigl((n_1^{3,j},\cdots,n_d^{3,j})\bigr)_{1\le j\le 2\sigma(\gamma_3^{(k-1)})}$};

\draw [dashed] (1.5,4)--(12.5,4);

\draw (7,8)--(7,7);
\draw (7,7)--(2,6);
\draw (7,7)--(7,6);
\draw (7,7)--(12,6);
\draw (2,5)--(2,6);
\draw (2,5)--(2,4);
\draw (7,5)--(7,6);
\draw (7,5)--(7,4);
\draw (12,5)--(12,6);
\draw (12,5)--(12,4);

\node at (3.25,-1) {$\blacksquare$};
\draw (1.7,1)--(3.25,-1);

\node at (4.5,-1) {\color{red}$\blacksquare$};
\draw [red](6.5,1)--(4.5,-1);

\node at (5.75,-1) {$\blacksquare$};
\draw (12.1,1)--(5.75,-1);

\node at (8.25,-1) {$\blacksquare$};
\draw (3.2,1)--(8.25,-1);

\node at(9.5,-1) {\color{red}$\blacksquare$};
\draw [red](8.2,1)--(9.5,-1);

\node at (10.75,-1) {$\blacksquare$};
\draw (13.5,1)--(10.75,-1);

\node at (7,-1) {$\cdots$};

\node at (3.63,-1.6) {$\widetilde\spadesuit_{1,x_1}^{(k-1)}$};
\node at (4.88,-1.6) {$\widetilde\spadesuit_{2,x_1}^{(k-1)}$};
\node at (6.13,-1.6) {$\widetilde\spadesuit_{3,x_1}^{(k-1)}$};

\node at (8.63,-1.6) {$\widetilde\spadesuit_{1,x_d}^{(k-1)}$};
\node at (9.88,-1.6) {$\widetilde\spadesuit_{2,x_d}^{(k-1)}$};
\node at (11.13,-1.6) {$\widetilde\spadesuit_{3,x_d}^{(k-1)}$};

\node at (4.5,-3.5) {$\blacksquare$};
\node at (5.57,-4) {$\widetilde\spadesuit_{x_1}^{(k)}\in\mathscr{D}_{x_1}^{(k,\gamma^{(k)})}$};
\node at (9.5,-3.5) {$\blacksquare$};
\node at (10.57,-4) {$\widetilde\spadesuit_{x_d}^{(k)}\in\mathscr{D}_{x_d}^{(k,\gamma^{(k)})}$};

\draw (3.27,-2)--(4.5,-3.5);
\draw [red](4.51,-2)--(4.5,-3.5);
\draw (5.75,-2)--(4.5,-3.5);

\draw (8.26,-2)--(9.5,-3.5);
\draw [red](9.51,-2)--(9.5,-3.5);
\draw (10.78,-2)--(9.5,-3.5);

\node at (7,-6) {$\blacksquare$};
\node at (8.08,-6.5) {$\spadesuit^{(k)}\in\mathscr{D}^{(k,\gamma^{(k)})}$};

\draw (7,-6)--(4.5,-4.3);
\draw (7,-6)--(9.44,-4.3);

\coordinate (A) at (0.46,5.7);

\coordinate (B) at (0.46,-4.1);
\draw [blue](A)--(B);

\coordinate (D) at (4.3,-4.1);
\draw [->,blue](B)--(D);

\coordinate (E) at (0.46,-6.6);
\draw [blue](A)--(E);

\coordinate (F) at (6.8,-6.6);
\draw [->,blue](E)--(F);

\coordinate (G) at (0.46,-7.6);
\draw [blue](A)--(G);

\coordinate (H) at (9.48,-7.6);
\draw [blue](G)--(H);

\coordinate (I) at (9.48,-4.4);
\draw [->,blue](H)--(I);

\node at (0.75,-3.9) {$\iota_{x_1}$};
\node at (0.6,-6.4) {$\iota$};
\node at (0.75,-7.4) {$\iota_{x_d}$};

\end{tikzpicture}
\caption{Diagram of the Duhamel term $({c}_k-{\mathfrak c}_0)(t,\boldsymbol{n})$. 
}
\label{fig:diagramduhamel}
\end{figure}

\begin{definition}
The {\em branch set} $\Gamma^{(k)}$ is defined recursively by
\begin{align*}
\Gamma^{(k)} =
\begin{cases}
\{0,1\}, & k = 1; \\
\{0\} \cup \bigl(\Gamma^{(k-1)}\bigr)^3, & k \ge 2,
\end{cases}
\end{align*}
where $0$ corresponds to a linear node and $1$ corresponds to a cubic interaction node.
\end{definition}

\begin{definition}
The {\em first counting function} $\sigma$ on $\Gamma^{(k)}$ is defined by
\begin{align*}
\sigma(\gamma^{(k)})=
\begin{cases}
\frac{1}{2}, & \gamma^{(k)}=0, \ k\ge1; \\
\frac{3}{2}, & \gamma^{(1)}=1; \\
\sum_{j=1}^{3}\sigma(\gamma_j^{(k-1)}), & \gamma^{(k)}=(\gamma_j^{(k-1)})_{j=1}^3,\ k\ge2.
\end{cases}
\end{align*}
\end{definition}

\begin{definition}
The {\em second counting function} $\ell$ is defined by
\begin{align*}
\ell(\gamma^{(k)})=
\begin{cases}
0, & \gamma^{(k)}=0, \ k\ge1; \\
1, & \gamma^{(1)}=1; \\
1+\sum_{j=1}^{3}\ell(\gamma_j^{(k-1)}), & \gamma^{(k)}=(\gamma_j^{(k-1)})_{j=1}^3,\ k\ge2.
\end{cases}
\end{align*}
\end{definition}

\begin{proposition}\label{prop:sigma_ell_identity}
For all $k\ge1$, we have
\begin{align*}
\sigma(\gamma^{(k)})=\ell(\gamma^{(k)})+\frac{1}{2}.
\end{align*}
\end{proposition}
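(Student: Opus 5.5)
We argue by induction on $k$, following the recursive structure of $\Gamma^{(k)}$. The point is that $\sigma$ and $\ell$ are defined by parallel recursions: $\sigma$ adds the values on the three subtrees, while $\ell$ adds them and then adds $1$. Since $\sigma$ is $\frac12$ larger than $\ell$ on each subtree, the three offsets combine to exactly one extra $1$ plus a leftover $\frac12$.

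\textbf{Base case $k=1$.} Here $\Gamma^{(1)}=\{0,1\}$, and we check both elements directly. For $\gamma^{(1)}=0$ we have $\sigma=\frac12$ and $\ell=0$, so $\sigma=\ell+\frac12$. For $\gamma^{(1)}=1$ we have $\sigma=\frac32$ and $\ell=1$, so again $\sigma=\ell+\frac12$.

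\textbf{Inductive step.} Let $k\ge2$ and assume the identity holds on $\Gamma^{(k-1)}$. Take $\gamma^{(k)}\in\Gamma^{(k)}=\{0\}\cup(\Gamma^{(k-1)})^3$ and split into two cases.

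If $\gamma^{(k)}=0$, the definitions give $\sigma=\frac12$ and $\ell=0$, and the identity holds.

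Otherwise $\gamma^{(k)}=(\gamma_j^{(k-1)})_{j=1}^3$. Applying the induction hypothesis to each component,
\[
\sigma(\gamma^{(k)})=\sum_{j=1}^3\sigma(\gamma_j^{(k-1)})=\sum_{j=1}^3\Bigl(\ell(\gamma_j^{(k-1)})+\tfrac12\Bigr)=\sum_{j=1}^3\ell(\gamma_j^{(k-1)})+\tfrac32 .
\]
By the definition of $\ell$ this equals $\bigl(\ell(\gamma^{(k)})-1\bigr)+\tfrac32=\ell(\gamma^{(k)})+\tfrac12$, which closes the induction.

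There is no real obstacle. The only thing to be careful about is that the element $0$ appears at every level $k$ (not only at $k=1$), so it must be treated in each inductive step as well as in the base case. Both definitions assign it the values $\frac12$ and $0$ uniformly in $k$, so this case is immediate. As a side remark, the identity is the usual tree count: $\ell$ counts cubic interaction nodes, $2\sigma$ counts leaves, and a ternary tree with $\ell$ internal nodes has $2\ell+1$ leaves.
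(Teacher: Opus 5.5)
Your induction on $k$ is correct: the base case checks both elements of $\Gamma^{(1)}$, you rightly treat $0$ at every level, and the triple case follows from $\sum_{j=1}^3\bigl(\ell(\gamma_j^{(k-1)})+\tfrac12\bigr)=\bigl(\ell(\gamma^{(k)})-1\bigr)+\tfrac32$. The paper states this proposition without writing out a proof, and your direct induction on the parallel recursive definitions of $\sigma$ and $\ell$ is the evident intended argument, the same kind of induction the paper invokes for its other tree-counting facts such as \autoref{prop:d} and \autoref{lemma:cif}.
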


\begin{definition}[Domain]
Define recursively
\begin{align*}
{\mathrm D}^{(k,\gamma^{(k)})}=
\begin{cases}
(\mathbb Z^{\nu}) , & \gamma^{(k)}=0;\\
(\mathbb Z^{\nu})^3, & \gamma^{(1)}=1;\\
\prod_{j=1}^3 {\mathrm D}^{(k-1,\gamma_j^{(k-1)})}, & \gamma^{(k)}\in(\Gamma^{(k-1)})^3.
\end{cases}
\end{align*}
\end{definition}

\begin{proposition}\label{prop:d}
Let $\dim_{\mathbb Z^\nu}\mathrm D^{(k,\gamma^{(k)})}$ denote the number of $\mathbb Z^\nu$-components of $\mathrm D^{(k,\gamma^{(k)})}$. Then
\[
\dim_{\mathbb Z^{\nu}} \mathrm D^{(k,\gamma^{(k)})} = 2\sigma(\gamma^{(k)}).
\]
That is, 
\begin{align*}
\mathrm D^{(k,\gamma^{(k)})} \cong (\mathbb Z^{\nu})^{2\sigma(\gamma^{(k)})}.
\end{align*}
\end{proposition}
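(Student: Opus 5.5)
We argue by induction on $k\ge1$, following the recursive structure shared by $\Gamma^{(k)}$, $\sigma$ and $\mathrm D^{(k,\gamma^{(k)})}$. The quantity $\dim_{\mathbb Z^\nu}$ counts the number of $\mathbb Z^\nu$-factors in a finite Cartesian product of copies of $\mathbb Z^\nu$. Hence it is additive under Cartesian products:
\[
\dim_{\mathbb Z^\nu}\Bigl(\prod_{j=1}^3 A_j\Bigr)=\sum_{j=1}^3\dim_{\mathbb Z^\nu}A_j .
\]
This additivity is the only structural fact we need.

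\textbf{Base case $k=1$.} Here $\Gamma^{(1)}=\{0,1\}$. If $\gamma^{(1)}=0$, then $\mathrm D^{(1,0)}=\mathbb Z^\nu$ has dimension $1=2\cdot\frac12=2\sigma(0)$. If $\gamma^{(1)}=1$, then $\mathrm D^{(1,1)}=(\mathbb Z^\nu)^3$ has dimension $3=2\cdot\frac32=2\sigma(1)$.

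\textbf{Inductive step.} Assume the identity holds for every $\gamma^{(k-1)}\in\Gamma^{(k-1)}$, and let $\gamma^{(k)}\in\Gamma^{(k)}=\{0\}\cup(\Gamma^{(k-1)})^3$ with $k\ge2$. We split into two cases.
\begin{itemize}
\item If $\gamma^{(k)}=0$, then $\mathrm D^{(k,0)}=\mathbb Z^\nu$ and $\sigma(0)=\frac12$, so the identity holds.
\item If $\gamma^{(k)}=(\gamma^{(k-1)}_j)_{j=1}^3$, then additivity and the induction hypothesis give
\[
\dim_{\mathbb Z^\nu}\mathrm D^{(k,\gamma^{(k)})}=\sum_{j=1}^3\dim_{\mathbb Z^\nu}\mathrm D^{(k-1,\gamma_j^{(k-1)})}=\sum_{j=1}^3 2\sigma(\gamma_j^{(k-1)})=2\sigma(\gamma^{(k)}),
\]
where the last equality is the recursive definition of $\sigma$.
\end{itemize}
The isomorphism $\mathrm D^{(k,\gamma^{(k)})}\cong(\mathbb Z^\nu)^{2\sigma(\gamma^{(k)})}$ then follows by flattening the nested products, using associativity of the Cartesian product.

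\textbf{Points to check.} There is no real obstacle. The only care needed is notational: the symbol $0$ is used at every level $k$ and means the linear leaf, so the case $\gamma^{(k)}=0$ must be handled directly at each level rather than by recursion. One should also note that $2\sigma(\gamma^{(k)})$ is always a positive integer. Indeed, \autoref{prop:sigma_ell_identity} gives $2\sigma=2\ell+1$, which is odd, and this is consistent with every cubic node replacing one leaf by three.
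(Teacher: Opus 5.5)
Your proposal is correct. The paper states this proposition without a written proof, and the structural induction you give is the intended argument, matching the inductive proofs the paper uses for the neighbouring tree-expansion results: base cases $\mathrm D^{(1,0)}=\mathbb Z^\nu$ and $\mathrm D^{(1,1)}=(\mathbb Z^\nu)^3$, the leaf case $\gamma^{(k)}=0$ handled directly at each level, and additivity of the component count matched against the recursive definition of $\sigma$.
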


\begin{proposition}
The domain ${\mathrm D}^{(k,\gamma^{(k)})}$ admits a canonical identification with
\begin{align*}
{\mathscr D}^{(k,\gamma^{(k)})}=
\begin{cases}
\prod_{j=1}^d \mathbb{Z}^{\nu_j}, & \gamma^{(k)}=0,\ k\ge1;\\
\prod_{j=1}^d (\mathbb{Z}^{\nu_j})^3, & \gamma^{(1)}=1;\\
\prod_{j=1}^3 {\mathscr D}^{(k-1,\gamma_j^{(k-1)})}, & \gamma^{(k)}\in(\Gamma^{(k-1)})^3,\ k\ge2.
\end{cases}
\end{align*}
This identification is induced by coordinate-wise projection.

More precisely, for each $j=1,\dots,d$, we define the projection
\[
\iota_{x_j}:\left(\prod_{j=1}^d \mathbb{Z}^{\nu_j}\right)^s \to (\mathbb{Z}^{\nu_j})^s,
\quad
\iota_{x_j}\big((q_1^1,\dots,q_d^1),\dots,(q_1^s,\dots,q_d^s)\big)
= (q_j^1,\dots,q_j^s).
\]
The full map $\iota=(\iota_{x_1},\dots,\iota_{x_d})$ induces the above identification.
\end{proposition}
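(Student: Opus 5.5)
The identification is a statement about the recursive structure of the domains, so I will argue by induction on $k$, following the recursion that defines both $\mathrm D^{(k,\gamma^{(k)})}$ and $\mathscr D^{(k,\gamma^{(k)})}$. The basic object is the map
\[
\iota=(\iota_{x_1},\dots,\iota_{x_d}):\Bigl(\prod_{j=1}^d\mathbb Z^{\nu_j}\Bigr)^s\to\prod_{j=1}^d(\mathbb Z^{\nu_j})^s .
\]
This map is a bijection for every $s\ge1$: it only regroups coordinates, and its inverse reassembles the $d$ blocks into $s$ vectors of $\mathbb Z^\nu=\prod_j\mathbb Z^{\nu_j}$. It is also additive and coordinatewise, so it is canonical and introduces no choices. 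By \autoref{prop:d}, $\mathrm D^{(k,\gamma^{(k)})}\cong(\mathbb Z^\nu)^{2\sigma(\gamma^{(k)})}$, so I apply $\iota$ with $s=2\sigma(\gamma^{(k)})$.

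\textbf{Base cases.} If $\gamma^{(k)}=0$, then $s=1$, and $\iota$ is just the identification $\mathbb Z^\nu=\prod_j\mathbb Z^{\nu_j}$. If $\gamma^{(1)}=1$, then $s=3$, and $\iota$ maps $(\mathbb Z^\nu)^3$ onto $\prod_j(\mathbb Z^{\nu_j})^3$, which matches the second line of the definition of $\mathscr D$.

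\textbf{Inductive step.} Let $k\ge2$ and $\gamma^{(k)}=(\gamma^{(k-1)}_j)_{j=1}^3$. By definition,
\[
\mathrm D^{(k,\gamma^{(k)})}=\prod_{i=1}^3\mathrm D^{(k-1,\gamma^{(k-1)}_i)} .
\]
By the inductive hypothesis, each factor is identified with $\mathscr D^{(k-1,\gamma^{(k-1)}_i)}$ via $\iota$ with $s_i=2\sigma(\gamma^{(k-1)}_i)$. Taking the product of these three maps gives a bijection onto $\prod_i\mathscr D^{(k-1,\gamma_i^{(k-1)})}=\mathscr D^{(k,\gamma^{(k)})}$.

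The remaining point is that this product map agrees with the single map $\iota$ applied with $s=\sum_i s_i=2\sigma(\gamma^{(k)})$. Here I use the recursion $\sigma(\gamma^{(k)})=\sum_{i=1}^3\sigma(\gamma^{(k-1)}_i)$. Applying $\iota$ to a concatenated tuple of $s$ vectors and then splitting the output into consecutive blocks of lengths $s_1,s_2,s_3$ gives the same result as applying $\iota$ to each block separately. This holds because $\iota_{x_j}$ acts on each of the $s$ vectors independently, so it commutes with concatenation.

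\textbf{Main obstacle.} There is no analytic difficulty; the only care needed is bookkeeping. The ordering of the $s$ vectors must be the one fixed by the tree (left-to-right over the three subtrees), as in \autoref{fig:diagramduhamel}, so that the identification is compatible with the nested structure. The product $\prod_j(\mathbb Z^{\nu_j})^s$ can be reordered in several ways; I will fix the tree-order convention explicitly and verify that $\iota$ commutes with concatenation under it. Both the base cases and the inductive step then follow immediately.
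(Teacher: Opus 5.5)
The paper states this proposition without proof, treating it as a bookkeeping convention (illustrated in \autoref{fig:diagramlinear} and \autoref{fig:diagramduhamel}). Your induction on $k$ therefore supplies an argument the paper omits, and it is correct.

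Your base cases and the product-of-bijections step already give $\mathrm D^{(k,\gamma^{(k)})}\cong\mathscr D^{(k,\gamma^{(k)})}$. The only non-tautological point is the one you isolate. The recursive clause $\prod_{i=1}^3\mathscr D^{(k-1,\gamma_i^{(k-1)})}$ is ordered subtree-first, whereas the codomain of $\iota$ at $s=2\sigma(\gamma^{(k)})$ is ordered direction-first, $\prod_{j=1}^d(\mathbb Z^{\nu_j})^{s}$. What gives ``induced by $\iota$'' a precise meaning is the combination of three facts you use:
\begin{itemize}
\item each $\iota_{x_j}$ acts vector-by-vector and therefore commutes with tree-order concatenation;
\item $\sigma(\gamma^{(k)})=\sum_{i}\sigma(\gamma_i^{(k-1)})$;
\item the canonical regrouping $\prod_{i=1}^3\prod_{j=1}^d(\mathbb Z^{\nu_j})^{s_i}\cong\prod_{j=1}^d(\mathbb Z^{\nu_j})^{s_1+s_2+s_3}$.
\end{itemize}

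This is also the convention the paper uses implicitly. In \autoref{fig:diagramduhamel}, $\widetilde\spadesuit^{(k-1)}_{1,x_j},\widetilde\spadesuit^{(k-1)}_{2,x_j},\widetilde\spadesuit^{(k-1)}_{3,x_j}$ are merged into $\widetilde\spadesuit^{(k)}_{x_j}$. In the proof of \autoref{size1}, $\widetilde\spadesuit^{(k)}_{x_s}=(m^j_s)_{1\le j\le 2\sigma(\gamma^{(k)})}$ is read in tree order. Writing the tree-order flattening out explicitly, as you plan to, completes the argument.
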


\begin{definition}[Alternating sum]
We define the map
\[
\lambda = \cas \circ \iota
= (\cas_{x_1}\circ \iota_{x_1}, \dots, \cas_{x_d}\circ \iota_{x_d}),
\]
where for each $j=1,\dots,d$, the map
\[
\cas_{x_j}: \bigcup_{s=1}^\infty (\mathbb Z^{\nu_j})^s \to \mathbb Z^{\nu_j}
\]
is defined by
\[
\cas_{x_j}(q_1,\dots,q_s)
:= \sum_{m=1}^s (-1)^{m-1} q_m.
\]
\end{definition}

\begin{proposition}[Tree representation of ${c}_{k}(t,\boldsymbol{n})$]\label{prop:tree}

For all $k\geq1$, we have
\[
{c}_k(t,\boldsymbol{n})
=
\sum_{\gamma^{(k)}\in\Gamma^{(k)}}
\sum_{\substack{\spadesuit^{(k)}\in{\mathrm D}^{(k,\gamma^{(k)})}\\
\lambda(\spadesuit^{(k)})=\boldsymbol{n}}}
\mathfrak C^{(k,\gamma^{(k)})}(\spadesuit^{(k)})
\mathfrak I^{(k,\gamma^{(k)})}(t,\spadesuit^{(k)})
\mathfrak F^{(k,\gamma^{(k)})}(\spadesuit^{(k)}),
\]
where for $0\in\Gamma^{(k)}$,
\begin{align*}
\mathfrak C^{(k,0)}(\spadesuit^{(k)})
&={c}(\lambda(\spadesuit^{(k)}))g_{\lambda(\spadesuit^{(k)})},\\
\mathfrak I^{(k,0)}(t,\spadesuit^{(k)})
&=e^{-\iu Q(\lambda(\spadesuit^{(k)})) t},\\
\mathfrak F^{(k,0)}(\spadesuit^{(k)})
&=1;
\end{align*}
for $1\in\Gamma^{(1)}$,
\begin{align*}
\mathfrak C^{(1,1)}(\spadesuit^{(1)})
&=\prod_{j=1}^3\{{c}(\boldsymbol{n}^j)g_{\boldsymbol{n}^j}\}^{\ast^{[j-1]}},\\
\mathfrak I^{(1,1)}(t,\spadesuit^{(1)})
&=\int_0^t e^{-\mathrm{i} Q(\lambda(\spadesuit^{(1)}))(t-s)}
\prod_{j=1}^{3}
\left\{e^{-\mathrm{i} Q(\boldsymbol{n}^j) s}\right\}^{\ast^{[j-1]}}
\,\mathrm{d}s,\\
\mathfrak F^{(1,1)}(\spadesuit^{(1)})
&=\mathrm{i}{\varepsilon^\alpha} ;
\end{align*}
and for $\gamma^{(k)}=(\gamma_j^{(k-1)})_{j=1}^3$,
\begin{align*}
\mathfrak C^{(k,\gamma^{(k)})}(\spadesuit^{(k)})
&=\prod_{j=1}^3
\left\{\mathfrak C^{(k-1,\gamma_j^{(k-1)})}(\spadesuit_j^{(k-1)})\right\}^{\ast^{[j-1]}},\\
\mathfrak I^{(k,\gamma^{(k)})}(t,\spadesuit^{(k)})
&=\int_0^t e^{-\iu Q(\lambda(\spadesuit^{(k)}))(t-s)}
\prod_{j=1}^3
\left\{\mathfrak I^{(k-1,\gamma_j^{(k-1)})} (s,\spadesuit_j^{(k-1)})\right\}^{\ast^{[j-1]}}
\,\mathrm{d}s,\\
\mathfrak F^{(k,\gamma^{(k)})}(\spadesuit^{(k)})
&=\mathrm{i}{\varepsilon^\alpha} \prod_{j=1}^3
\left\{\mathfrak F^{(k-1,\gamma_j^{(k-1)})}(\spadesuit_j^{(k-1)})\right\}^{\ast^{[j-1]}}.
\end{align*}

\end{proposition}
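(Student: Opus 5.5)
The plan is to argue by induction on $k\ge1$, using only the recursion \eqref{eq:picard}, the initial guess \eqref{eq:initialguess}, and the recursive definitions of $\Gamma^{(k)}$, $\mathrm D^{(k,\gamma^{(k)})}$, $\mathfrak C$, $\mathfrak I$, $\mathfrak F$ and $\lambda$. The main bookkeeping fact is how $\lambda$ behaves on the product domain. If $\spadesuit^{(k)}=(\spadesuit_1^{(k-1)},\spadesuit_2^{(k-1)},\spadesuit_3^{(k-1)})$ with $\spadesuit_j^{(k-1)}\in\mathrm D^{(k-1,\gamma_j^{(k-1)})}$, then, by Proposition~\ref{prop:d}, each block has even length $2\sigma(\gamma_j^{(k-1)})-$, or more precisely odd length $2\ell(\gamma_j^{(k-1)})+1$ by Proposition~\ref{prop:sigma_ell_identity}. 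The alternating sum $\cas$ over the concatenation therefore gives
\[
\lambda(\spadesuit^{(k)})=\lambda(\spadesuit_1^{(k-1)})-\lambda(\spadesuit_2^{(k-1)})+\lambda(\spadesuit_3^{(k-1)}).
\]
This holds because the parity of the starting index in the second and third blocks shifts by an odd number: the sign flips at block two and flips back at block three. I would state and check this first, coordinatewise through $\iota_{x_j}$.

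\textbf{Base case $k=1$.} Insert $c_0$ into \eqref{eq:picard}. The term $c_0(t,\boldsymbol n)$ is exactly the $\gamma^{(1)}=0$ summand, since the constraint $\lambda(\spadesuit)=\boldsymbol n$ on $\mathrm D^{(1,0)}=\mathbb Z^\nu$ picks the single point $\boldsymbol n$. In the Duhamel integral, each factor $\{c_0(s,\boldsymbol n^j)\}^{\ast^{[j-1]}}$ factorizes as $\{c(\boldsymbol n^j)g_{\boldsymbol n^j}\}^{\ast^{[j-1]}}\{e^{-\iu Q(\boldsymbol n^j)s}\}^{\ast^{[j-1]}}$. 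Pulling the $s$-independent coefficients out of the integral gives $\mathfrak C^{(1,1)}\mathfrak I^{(1,1)}\mathfrak F^{(1,1)}$, with the constraint $\boldsymbol n^1-\boldsymbol n^2+\boldsymbol n^3=\lambda(\spadesuit^{(1)})=\boldsymbol n$.

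\textbf{Inductive step.} Assume the formula for $c_{k-1}$ and substitute it for each of the three factors in \eqref{eq:picard}. Conjugation distributes over the sums and products, so $\{\mathfrak C\mathfrak I\mathfrak F\}^\ast=\mathfrak C^\ast\mathfrak I^\ast\mathfrak F^\ast$. Expanding the product of three sums gives a sum over triples $(\gamma_1,\gamma_2,\gamma_3)\in(\Gamma^{(k-1)})^3$ and over $\spadesuit_j\in\mathrm D^{(k-1,\gamma_j)}$ with $\lambda(\spadesuit_j)=\boldsymbol n^j$. Summing over $\boldsymbol n^1-\boldsymbol n^2+\boldsymbol n^3=\boldsymbol n$ then eliminates the $\boldsymbol n^j$. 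By the identity above, this leaves the single constraint $\lambda(\spadesuit^{(k)})=\boldsymbol n$ on $\mathrm D^{(k,\gamma^{(k)})}=\prod_j\mathrm D^{(k-1,\gamma_j)}$.

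For the factors: the coefficients $\mathfrak C$ and the factors $\mathfrak F$ are time independent, so they come out of $\int_0^t$, giving the recursive $\mathfrak C^{(k,\gamma)}$ and $\mathfrak F^{(k,\gamma)}$ (the latter including the prefactor $\iu\varepsilon^\alpha$). What remains inside the integral is $e^{-\iu Q(\boldsymbol n)(t-s)}\prod_j\{\mathfrak I^{(k-1,\gamma_j)}(s,\cdot)\}^{\ast^{[j-1]}}$ with $\boldsymbol n=\lambda(\spadesuit^{(k)})$, which is $\mathfrak I^{(k,\gamma)}$. The linear term $c_0$ gives the $\gamma^{(k)}=0$ branch, and together these exhaust $\Gamma^{(k)}=\{0\}\cup(\Gamma^{(k-1)})^3$.

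\textbf{Main obstacle.} The delicate points are justifying the interchange of the infinite sums with the time integral and with each other, and the sign bookkeeping for $\lambda$. For the interchange, I would use absolute convergence on $\omega\notin\Omega_\delta$. By Corollary~\ref{cor:polynomial_decay}, $|c(\boldsymbol n)g_{\boldsymbol n}|\lesssim\varepsilon^{-1/2-\eta/2}\tnorm{\boldsymbol n}_{-(\rho-\kappa)}$, which is summable since $\rho-\kappa>2$. Since $|\mathfrak I|\le t^{\ell}$ and the domains are finite products, each finite-$k$ expansion is a finite sum of absolutely convergent multi-series. This allows Fubini and Tonelli, and the formal manipulations above become rigorous.
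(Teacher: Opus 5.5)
Your induction on $k$ is correct and is essentially the Picard-iteration argument the paper cites from \cite[Theorem 2.1]{X25}: you substitute the formula for $c_{k-1}$ into \eqref{eq:picard}, collapse the constraint via $\lambda(\spadesuit^{(k)})=\lambda(\spadesuit_1^{(k-1)})-\lambda(\spadesuit_2^{(k-1)})+\lambda(\spadesuit_3^{(k-1)})$, and pull the time-independent $\mathfrak C$ and $\mathfrak F$ out of $\int_0^t$; your absolute-convergence justification on $\Omega_\delta^c$ adds detail the paper leaves implicit. The only slip is cosmetic: each block has odd length $2\sigma(\gamma_j^{(k-1)})=2\ell(\gamma_j^{(k-1)})+1$, not even as your first phrasing says, and this oddness is exactly what produces the sign pattern $+,-,+$.
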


\begin{proof}
The result follows by iterating \eqref{eq:picard} with initial condition \eqref{eq:initialguess}
as in \cite[Theorem 2.1]{X25}.
\end{proof}
\subsection{Uniform-in-Time Polynomial Decay of the Picard Sequence}
Let $\boldsymbol{n}\in\mathbb{Z}^\nu$ be fixed. We also expand it as
\[
(n_1,\cdots,n_d)\in\mathbb{Z}^{\nu_1}\times\cdots\times\mathbb{Z}^{\nu_d},
\]
and further write
\[(n_{1,1},\cdots,n_{1,\nu_1};\cdots;n_{d,1},\cdots,n_{d,\nu_d})\in\underbrace{\mathbb Z\times\cdots\times\mathbb Z}_{\nu_1}\times\cdots\times\underbrace{\mathbb Z\times\cdots\times\mathbb Z}_{\nu_d}.\]

For $\gamma^{(k)}=0\in\Gamma^{(k)}$ with $k\geq1$ (see \autoref{fig:diagramlinear}), we set $\boldsymbol{m}^1=\boldsymbol{n}$, that is,
$(m^1_1,\cdots,m^1_d)=(n_1,\cdots,n_d)$, or equivalently,
\[
\left(m^1_{1,1},\cdots,m^1_{1,\nu_1}; \cdots; m^1_{d,1},\cdots, m^1_{d,\nu_d}\right)
=
\left(n_{1,1},\cdots,n_{1,\nu_1}; \cdots; n_{d,1},\cdots, n_{d,\nu_d}\right).
\]

For $\gamma^{(k)}=(\gamma_1^{(k-1)},\gamma_2^{(k-1)},\gamma_3^{(k-1)})\in\Gamma^{(k)}\setminus\{0\}$, we write $\boldsymbol{n}=\boldsymbol{n}^1-\boldsymbol{n}^2+\boldsymbol{n}^3\in\mathbb Z^\nu$ (see \autoref{fig:diagramduhamel}), and set
\begin{align*}
\mathbb Z^\nu\ni\boldsymbol{m}^j=
\begin{cases}
\boldsymbol{n}^{1,j}, & 1\leq j\leq 2\sigma(\gamma_1^{(k-1)});\\[4pt]
\boldsymbol{n}^{2,\,j-2\sigma(\gamma_1^{(k-1)})}, 
& 2\sigma(\gamma_1^{(k-1)})+1\le j\le 2\sigma(\gamma_1^{(k-1)})+2\sigma(\gamma_2^{(k-1)});\\[4pt]
\boldsymbol{n}^{3,\,j-2\sigma(\gamma_1^{(k-1)})-2\sigma(\gamma_2^{(k-1)})}, 
& 2\sigma(\gamma_1^{(k-1)})+2\sigma(\gamma_2^{(k-1)})+1\le j\le 2\sigma(\gamma^{(k)}).
\end{cases}
\end{align*}
Equivalently, for each component $s=1,\dots,d$, we have
\begin{align*}
\mathbb Z^{\nu_{s}}\ni m_{s}^j=
\begin{cases}
n_{s}^{1,j}, & 1\leq j\leq 2\sigma(\gamma_1^{(k-1)});\\[4pt]
n_{s}^{2,\,j-2\sigma(\gamma_1^{(k-1)})}, 
& 2\sigma(\gamma_1^{(k-1)})+1\le j\le 2\sigma(\gamma_1^{(k-1)})+2\sigma(\gamma_2^{(k-1)});\\[4pt]
n_{s}^{3,\,j-2\sigma(\gamma_1^{(k-1)})-2\sigma(\gamma_2^{(k-1)})}, 
& 2\sigma(\gamma_1^{(k-1)})+2\sigma(\gamma_2^{(k-1)})+1\le j\le 2\sigma(\gamma^{(k)}),
\end{cases}
\end{align*}
where $m_{s}^j=\left(m^j_{s,1},\cdots,m^j_{s,\nu_{s}}\right)\in\mathbb Z^{\nu_{s}}$. 

Thus
\[
\widetilde{\spadesuit}^{(k)}_{x_s}=(m_s^j)_{1\le j\le2\sigma(\gamma^{(k)})},\qquad 1\le s\le d,\quad k\geq1. 
\]

\begin{proposition}[Estimates of $\mathfrak C,\mathfrak I$ and $\mathfrak F$]\label{lemma:cif}
For all $k\geq1$, the following estimates hold: 

\begin{enumerate}[label=(\arabic*)]

\item 
\begin{align}\label{ce}
\mathfrak C^{(k,\gamma^{(k)})}(\spadesuit^{(k)})
=\prod_{j=1}^{2\sigma(\gamma^{(k)})}\left\{c(\boldsymbol m^j)g_{\boldsymbol{m}^j}\right\}^{\ast^{[j-1]}}.
\end{align}
Moreover, under the decay property \eqref{eq:decayall}, it holds that
\begin{align}\label{ece}
|\mathfrak C^{(k,\gamma^{(k)})}(\spadesuit^{(k)})|&\leq
{\varepsilon^{-(1+\eta)\sigma(\gamma^{(k)})}}\prod_{j=1}^{2\sigma(\gamma^{(k)})}\tnorm{\boldsymbol{m}^j}_{-(\rho-\kappa)}\nonumber\\
&={\varepsilon^{-(1+\eta)\sigma(\gamma^{(k)})}}
\prod_{j=1}^{2\sigma(\gamma^{(k)})}\prod_{s=1}^{d}\prod_{q=1}^{\nu_s}\left(1+|m^{j}_{s,q}|\right)^{-(\rho_{s,q}-\kappa_{s,q})}; 
\end{align}

\item 
\begin{align}\label{ie}
|\mathfrak I^{(k,\gamma^{(k)})}(t,\spadesuit^{(k)})|\leq\frac{t^{\ell(\gamma^{(k)})}}{\mathfrak D(\gamma^{(k)})},
\end{align}
where
\begin{align*}
\mathfrak D(\gamma^{(k)}) =
\begin{cases}
1, & \gamma^{(k)}=0\in\Gamma^{(k)},\ k\geq1,\\[4pt]
1, & \gamma^{(1)}\in\Gamma^{(1)},\\[4pt]
\displaystyle \ell(\gamma^{(k)})\prod_{j=1}^3\mathfrak D(\gamma_j^{(k-1)}), & \gamma^{(k)}=(\gamma_j^{(k-1)})\in(\Gamma^{(k-1)})^3; 
\end{cases}
\end{align*}

\item 
\begin{align}\label{fe}
|\mathfrak F^{(k,\gamma^{(k)})}(\spadesuit^{(k)})|\leq {\varepsilon^{\alpha\ell(\gamma^{(k)})}}.
\end{align}

\end{enumerate}
\end{proposition}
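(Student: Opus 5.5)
All three estimates follow by induction on $k$, using the recursive definitions in \autoref{prop:tree} and the counting functions $\sigma,\ell$. The base cases are $\gamma^{(k)}=0$ for any $k\ge1$, and $\gamma^{(1)}=1$. In the inductive step we assume the claims for the three children $\gamma_j^{(k-1)}$, $j=1,2,3$, of $\gamma^{(k)}=(\gamma_j^{(k-1)})_{j=1}^3$.

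\textbf{Step (1).} For $\gamma^{(k)}=0$, \eqref{ce} is the definition with $2\sigma=1$ and $\boldsymbol m^1=\lambda(\spadesuit^{(k)})=\boldsymbol n$. For $\gamma^{(1)}=1$ it is again the definition, with $2\sigma=3$.

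In the inductive step, $\mathfrak C^{(k,\gamma^{(k)})}$ is a product over $j$ of $\{\mathfrak C^{(k-1,\gamma_j^{(k-1)})}\}^{\ast^{[j-1]}}$. Substituting the inductive formula, the middle factor is conjugated, which flips all its internal conjugation exponents. The point to check is that the alternating pattern is preserved under the concatenated labelling $\boldsymbol m^j$. Each block has odd length $2\sigma(\gamma_j^{(k-1)})=2\ell+1$, so the parity offsets work out: the second block starts at an even global index and is conjugated, and the third starts at an odd index. This is the same bookkeeping that makes $\lambda=\cas\circ\iota$ consistent with $\boldsymbol n^1-\boldsymbol n^2+\boldsymbol n^3=\boldsymbol n$.

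The bound \eqref{ece} then follows by applying \eqref{eq:decayall} to each of the $2\sigma(\gamma^{(k)})$ factors. Each factor contributes $\varepsilon^{-\frac12-\frac\eta2}$, which multiplies to $\varepsilon^{-(1+\eta)\sigma(\gamma^{(k)})}$. The factorization of $\tnorm{\cdot}_{-(\rho-\kappa)}$ over $(s,q)$ gives the second line.

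\textbf{Step (2).} For $\gamma^{(k)}=0$ we have $|\mathfrak I|=1=t^0/1$. For $\gamma^{(1)}=1$, the integrand has modulus $1$, so $|\mathfrak I|\le t=t^{\ell}/\mathfrak D$. In the inductive step, the oscillatory factor $e^{-\iu Q(\cdot)(t-s)}$ has modulus one, and conjugation preserves moduli. Hence
\[
|\mathfrak I^{(k,\gamma^{(k)})}(t)|\le\int_0^t\prod_{j=1}^3\frac{s^{\ell(\gamma_j^{(k-1)})}}{\mathfrak D(\gamma_j^{(k-1)})}\,ds=\frac{t^{1+\sum_j\ell(\gamma_j^{(k-1)})}}{\bigl(1+\sum_j\ell(\gamma_j^{(k-1)})\bigr)\prod_j\mathfrak D(\gamma_j^{(k-1)})}.
\]
Since $1+\sum_j\ell(\gamma_j^{(k-1)})=\ell(\gamma^{(k)})$, the right-hand side equals $t^{\ell(\gamma^{(k)})}/\mathfrak D(\gamma^{(k)})$. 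We deliberately do not exploit any oscillation or resonance here; the crude bound suffices.

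\textbf{Step (3).} The same induction applies: $|\mathfrak F^{(k,0)}|=1=\varepsilon^{0}$ and $|\mathfrak F^{(1,1)}|=\varepsilon^\alpha$. In the inductive step, $|\mathfrak F^{(k,\gamma^{(k)})}|\le\varepsilon^{\alpha}\prod_j\varepsilon^{\alpha\ell(\gamma_j^{(k-1)})}=\varepsilon^{\alpha\ell(\gamma^{(k)})}$.

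The only genuinely delicate point is the index and conjugation bookkeeping in Step (1). Everything else is direct induction using the recursions for $\sigma$ and $\ell$ together with \autoref{prop:sigma_ell_identity}.
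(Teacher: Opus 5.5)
Your proposal is correct and follows the paper's approach: the paper proves all three estimates by induction on $k$ through the recursive definitions of $\mathfrak C$, $\mathfrak I$, $\mathfrak F$, $\sigma$, $\ell$ and $\mathfrak D$, deferring the details to Lemmas 2.5--2.7 of X25. Your parity argument for the conjugation pattern in \eqref{ce}, based on each block having odd length $2\sigma(\gamma_j^{(k-1)})=2\ell(\gamma_j^{(k-1)})+1$, supplies bookkeeping that the paper leaves implicit.
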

\begin{proof}
These estimates are proved by induction, following the argument in \cite[Lemmas 2.5--2.7]{X25}.
\end{proof}

\begin{lemma}[Uniform-in-time polynomial decay of ${c}_k(t,\boldsymbol{n})$]\label{size1}
{On the complement of $\Omega_\delta$ with $\delta\sim\varepsilon^{1+\eta}$}, for all $k\geq1$, it holds that
\begin{align}\label{eq:decayt}
\sup_{t\in[0,T_\varepsilon]}|{c}_k(t,\boldsymbol{n})|\lesssim 
{\varepsilon^{-\frac{1}{2}-\frac{\eta}{2}}}\tnorm{\boldsymbol{n}}_{-\frac{\rho-\kappa}{2}}\end{align}
\end{lemma}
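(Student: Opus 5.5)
We start from the tree representation of \autoref{prop:tree} and bound each term with the triangle inequality and \autoref{lemma:cif}. For fixed $\boldsymbol n$ and $t\in[0,T_\varepsilon]$ this gives
\[
|c_k(t,\boldsymbol n)|\le \sum_{\gamma^{(k)}\in\Gamma^{(k)}}
\frac{\varepsilon^{\alpha\ell(\gamma^{(k)})}\,T_\varepsilon^{\ell(\gamma^{(k)})}\,\varepsilon^{-(1+\eta)\sigma(\gamma^{(k)})}}{\mathfrak D(\gamma^{(k)})}
\sum_{\substack{\spadesuit^{(k)}\in\mathrm D^{(k,\gamma^{(k)})}\\ \lambda(\spadesuit^{(k)})=\boldsymbol n}}
\prod_{j=1}^{2\sigma(\gamma^{(k)})}\tnorm{\boldsymbol m^j}_{-(\rho-\kappa)}.
\]
By \autoref{prop:sigma_ell_identity} we have $\sigma=\ell+\frac12$, so the $\varepsilon$-prefactor equals $\varepsilon^{-\frac12-\frac{\eta}{2}}\bigl(\varepsilon^{\alpha-1-\eta}T_\varepsilon\bigr)^{\ell(\gamma^{(k)})}$. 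Choosing $T_\varepsilon=c_*\varepsilon^{-\alpha+1+\eta}$ with a small constant $c_*$ turns this into $\varepsilon^{-\frac12-\frac\eta2}c_*^{\ell(\gamma^{(k)})}$. The $\gamma=0$ term is handled directly by \eqref{eq:decayall}, since $\tnorm{\boldsymbol n}_{-(\rho-\kappa)}\le\tnorm{\boldsymbol n}_{-\frac{\rho-\kappa}{2}}$.

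The second step is the lattice sum over $\lambda(\spadesuit^{(k)})=\boldsymbol n$, which splits coordinatewise through the projections $\iota_{x_s}$ and then into each scalar component $(s,q)$. For each component we use the elementary inequality
\[
(1+|a_1-a_2+\cdots|)\le\prod_j(1+|a_j|).
\]
From it we extract $\prod_j(1+|m^j_{s,q}|)^{-\frac{\rho_{s,q}-\kappa_{s,q}}2}\le(1+|n_{s,q}|)^{-\frac{\rho_{s,q}-\kappa_{s,q}}2}$, which is where the halved exponent in \eqref{eq:decayt} comes from. The remaining factor $\prod_j(1+|m^j_{s,q}|)^{-\frac{\rho_{s,q}-\kappa_{s,q}}2}$ we sum freely over all $m^j$, dropping the constraint. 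Since $\frac{\rho-\kappa}{2}>1$ by \autoref{cor:polynomial_decay}, each one-dimensional sum is bounded by a constant $A_{s,q}$. The total is therefore at most $\tnorm{\boldsymbol n}_{-\frac{\rho-\kappa}{2}}\,A^{2\sigma(\gamma^{(k)})}$, where $A=\prod_{s,q}A_{s,q}$.

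Combining the two steps gives
\[
|c_k(t,\boldsymbol n)|\lesssim \varepsilon^{-\frac12-\frac\eta2}\tnorm{\boldsymbol n}_{-\frac{\rho-\kappa}2}\sum_{\gamma^{(k)}\in\Gamma^{(k)}}\frac{(c_*A^2)^{\ell(\gamma^{(k)})}}{\mathfrak D(\gamma^{(k)})}.
\]
The main obstacle is bounding this tree sum uniformly in $k$. It suffices to drop $\mathfrak D\ge1$ and note that the number of trees with $\ell$ cubic nodes is at most the ternary Catalan number, which is at most $C^\ell$ with $C=27/4$. Choosing $c_*$ so that $c_*A^2C<1$ then makes the geometric series converge uniformly in $k$, which completes the bound. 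An alternative route is to prove by induction on $k$ that the generating function $S_k(x)=\sum_{\gamma\in\Gamma^{(k)}}x^{\ell(\gamma)}$ satisfies $S_k=1+xS_{k-1}^3$. Its iterates stay bounded by the smallest fixed point, which exists for $x<4/27$.
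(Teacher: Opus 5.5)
Your proposal is correct and follows the paper's proof essentially step for step. You make the same prefactor reduction via $\sigma=\ell+\frac12$ to $\varepsilon^{-\frac12-\frac\eta2}(\varepsilon^{\alpha-1-\eta}t)^{\ell}$, and the same splitting of the weight into two halves (one controlled by the Bernoulli-type inequality under $\lambda(\spadesuit^{(k)})=\boldsymbol n$, the other summed freely); the only difference is that you prove the uniform-in-$k$ bound on the tree series yourself (ternary Catalan counting, or the recursion $S_k=1+xS_{k-1}^3$ and its fixed point for $x<4/27$), whereas the paper cites Lemma 4.4 of [DLX24], with the same $4/27$ threshold and the same time scale $T_\varepsilon\sim\varepsilon^{-\alpha+1+\eta}$.
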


\begin{proof}
It follows from \autoref{prop:tree} and \autoref{lemma:cif} that

\begin{align*}
|c_k(t,\boldsymbol{n})|\leq
{\varepsilon^{-\frac{1}{2}-\frac{\eta}{2}}}
\sum_{\gamma^{(k)}\in\Gamma^{(k)}}\frac{({\varepsilon^{\alpha-1-\eta}} t)^{\ell(\gamma^{(k)})}}{\mathfrak D(\gamma^{(k)})}
\sum_{\substack{\lambda(\spadesuit^{(k)})=\boldsymbol{n}\\\spadesuit^{(k)}\in\mathrm D^{(k,\gamma^{(k)})}}}\prod_{j=1}^{2\sigma(\gamma^{(k)})}
\prod_{s=1}^{d}\prod_{q=1}^{\nu_s}\left(1+|m^{j}_{s,q}|\right)^{-(\rho_{s,q}-\kappa_{s,q})}.
\end{align*}

We split the weight into two factors, each carrying exponent $-\frac{\rho_{s,q}-\kappa_{s,q}}{2}$. 

On the one hand, we estimate one factor using the Bernoulli's inequality and obtain the following result:  
\begin{align*}
\prod_{j=1}^{2\sigma(\gamma^{(k)})}
\prod_{s=1}^{d}\prod_{q=1}^{\nu_s}\left(1+|m^{j}_{s,q}|\right)^{-(\rho_{s,q}-\kappa_{s,q})}
&=\prod_{s=1}^{d}\prod_{q=1}^{\nu_s}
\left\{\prod_{j=1}^{2\sigma(\gamma^{(k)})}
\left(1+|m^{j}_{s,q}|\right)\right\}^{-\frac{\rho_{s,q}-\kappa_{s,q}}{2}}\\
&\leq\prod_{s=1}^{d}\prod_{q=1}^{\nu_s}
\left\{1+\sum_{j=1}^{2\sigma(\gamma^{(k)})}
|m^{j}_{s,q}|\right\}^{-\frac{\rho_{s,q}-\kappa_{s,q}}{2}}\\
&\leq\prod_{s=1}^{d}\prod_{q=1}^{\nu_s}
\left\{1+\left|\sum_{j=1}^{2\sigma(\gamma^{(k)})}
(-1)^{j-1}m^{j}_{s,q}\right|\right\}^{-\frac{\rho_{s,q}-\kappa_{s,q}}{2}}\\
&=\prod_{s=1}^{d}\prod_{q=1}^{\nu_s}
\left(1+|n_{s,q}|\right)^{-\frac{\rho_{s,q}-\kappa_{s,q}}{2}}\\
&=\tnorm{\boldsymbol{n}}_{-\frac{\rho-\kappa}{2}}.
\end{align*}
Here we use the condition $\lambda(\spadesuit^{(k)})=\boldsymbol{n}$, that is,
\[\sum_{j=1}^{2\sigma(\gamma^{(k)})}(-1)^{j-1}m^j_{s,q}=n_{s,q},\qquad 1\leq s\leq d\quad\text{and}\quad1\leq q\leq\nu_s.\]

On the other hand, we have
\begin{align*}
\sum_{\substack{\lambda(\spadesuit^{(k)})=\boldsymbol{n}\\\spadesuit^{(k)}\in\mathrm D^{(k,\gamma^{(k)})}}}
\prod_{j=1}^{2\sigma(\gamma^{(k)})}
\prod_{s=1}^{d}\prod_{q=1}^{\nu_s}\left(1+|m^{j}_{s,q}|\right)^{-\frac{\rho_{s,q}-\kappa_{s,q}}{2}}
&\leq \prod_{j=1}^{2\sigma(\gamma^{(k)})}
\prod_{s=1}^{d}\prod_{q=1}^{\nu_s}\sum_{m_{s,q}^{j}\in\mathbb Z}\left(1+|m^{j}_{s,q}|\right)^{-\frac{\rho_{s,q}-\kappa_{s,q}}{2}}\\
&\leq \prod_{j=1}^{2\sigma(\gamma^{(k)})}
\prod_{s=1}^{d}\prod_{q=1}^{\nu_s}\sum_{m_{s,q}^{j}\in\mathbb Z}\left(1+|m^{j}_{s,q}|\right)^{-\frac{(\rho-\kappa)_{\min}}{2}}\\
&\leq \prod_{j=1}^{2\sigma(\gamma^{(k)})}
\prod_{s=1}^{d}\prod_{q=1}^{\nu_s}
\left\{1+2\mathfrak{b}\left(\frac{(\rho-\kappa)_{\min}}{2}\right)\right\}\\
 &\leq \mathfrak{B}^{2\sigma(\gamma^{(k)})\nu}\\
&= \mathfrak{B}^{(2\ell(\gamma^{(k)})+1)\nu}\\
&= \mathfrak{B}^{\nu} \cdot \left(\mathfrak{B}^{2\nu}\right)^{\ell(\gamma^{(k)})},
\end{align*}
where
\begin{align*}
(\rho-\kappa)_{\text{min}}&=\min_{\substack{1\leq j\leq d\\1\le j^\prime\le\nu_j}}\rho_{j,j^\prime}-\kappa_{j,j^\prime};\\
\mathfrak{b}(s) &= \frac{s}{s-1} \ge \zeta(s) = \sum_{n=1}^{\infty} \frac{1}{n^s}, \quad s > 1;\\
\mathfrak{B} &= 1 + 2\mathfrak{b}\left(\frac{(\rho-\kappa)_{\min}}{2}\right).
\end{align*}

Thus
\[
\big|c_k(t,\boldsymbol{n})\big|
\lesssim {\varepsilon^{-\frac{1}{2}-\frac{\eta}{2}}}
\tnorm{\boldsymbol{n}}_{-\frac{\rho-\kappa}{2}}
\sum_{\gamma^{(k)}\in\Gamma^{(k)}}
\frac{({{\varepsilon^{\alpha-1-\eta}}\mathfrak B^{2\nu}} t)^{\ell(\gamma^{(k)})}}{\mathfrak D(\gamma^{(k)})}.
\]

It follows from \cite[Lemma 4.4]{DLX24} that the above series over $\Gamma^{(k)}$ is uniformly bounded provided that
\[
0<t\leq\frac{4}{27}\mathfrak B^{-2\nu}{\varepsilon^{-\alpha+1+\eta}}\triangleq T_{\varepsilon}\lesssim{\varepsilon^{-\alpha+1+\eta}}.
\]

Consequently, \eqref{eq:decayt} holds. 
\end{proof}

\section{Nonlinear LDP}\label{sec:nonlinear}
In this section, we combine the linear LDP (\autoref{thm:expectation-variance}) with the well-posedness result (\autoref{thm:eude}) to establish the nonlinear LDP (\autoref{thm:nonlinearldp}). The main idea is to transfer the large deviations estimates from the linearized equation to the full nonlinear dynamics via a suitable {\em perturbative} argument. In particular, the well-posedness theory guarantees that the nonlinear solution can be constructed globally in the regime under consideration, which allows us to compare it with the corresponding linear evolution and control the nonlinear error terms; see \autoref{eq:duhamel}.

\subsection{Duhamel Estimates}

\begin{lemma}[Duhamel estimates]\label{eq:duhamel}
Let $u$ be the nonlinear solution to the Cauchy problem \eqref{eq:nls}--\eqref{eq:initialdata}, and let $u_{\mathrm{linear}}$ denote the corresponding linearized solution (see \eqref{eq:linearsolution}). {On the complement of $\Omega_\delta$ with $\delta\sim\varepsilon^{1+\eta}$}, we have
\[
\|u(t)-u_{\mathrm{linear}}(t)\|_{L_{\boldsymbol{x}}^\infty(\mathbb{R}^d)}
=\mathcal O\left(\varepsilon^{-\frac{1}{2}+\frac{\mu_3}{2}}\right),
\quad 
t\sim\varepsilon^{-\alpha+1+\frac{3}{2}\eta+\frac{\mu_3}{2}}.
\]
\end{lemma}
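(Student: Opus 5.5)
The estimate comes from bounding the Duhamel part of the solution in Fourier space, term by term in the tree expansion of \autoref{prop:tree}, and then summing absolutely in $\boldsymbol n$.

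Write $u = \lim_k \sum_{\boldsymbol n} c_k(t,\boldsymbol n) e^{\iu\langle \boldsymbol n\boldsymbol\Omega,\boldsymbol x\rangle}$, where $c_k$ are the Picard iterates from \autoref{thm:eude}. The time $t\sim\varepsilon^{-\alpha+1+\frac32\eta+\frac{\mu_3}{2}}$ lies below $T_\varepsilon\sim\varepsilon^{-\alpha+1+\eta}$ for small $\varepsilon$, so the uniform bounds of \autoref{size1} are available. Since $|e^{\iu\langle\boldsymbol n\boldsymbol\Omega,\boldsymbol x\rangle}|=1$, we have
\[
\|u(t)-u_{\mathrm{linear}}(t)\|_{L^\infty_{\boldsymbol x}}\le \sum_{\boldsymbol n\in\mathbb Z^\nu}|c(t,\boldsymbol n)-c_0(t,\boldsymbol n)|,
\]
which removes the supremum over $\mathbb R^d$, as in \eqref{eq:cr}. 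The difference $c_k-c_0$ is exactly the sum over $\gamma^{(k)}\in\Gamma^{(k)}\setminus\{0\}$ in \autoref{prop:tree}. Every such tree has $\ell(\gamma^{(k)})\ge1$, and this is the source of the gain.

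For fixed $\gamma^{(k)}\neq0$, I apply \autoref{lemma:cif} exactly as in the proof of \autoref{size1}. The factor $\varepsilon^{-(1+\eta)\sigma}$ combines with $\varepsilon^{\alpha\ell}$ and $t^\ell/\mathfrak D$ to give
\[
\varepsilon^{-\frac12-\frac\eta2}\,\frac{(\varepsilon^{\alpha-1-\eta}\mathfrak B^{2\nu}t)^{\ell}}{\mathfrak D}.
\]
The weight is split in half as before, yielding $\tnorm{\boldsymbol n}_{-\frac{\rho-\kappa}{2}}$ times a bounded lattice sum. Summing over $\boldsymbol n$ is harmless: $\sum_{\boldsymbol n}\tnorm{\boldsymbol n}_{-\frac{\rho-\kappa}2}<\infty$ because $\rho_{j,j'}-\kappa_{j,j'}>2$. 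One could avoid the split and sum the full weight directly, using that the constraint $\lambda(\spadesuit)=\boldsymbol n$ is dropped once we sum over all $\boldsymbol n$. Either way we get
\[
\sum_{\boldsymbol n}|c_k-c_0|\lesssim \varepsilon^{-\frac12-\frac\eta2}\sum_{\gamma\ne0}\frac{X^{\ell(\gamma)}}{\mathfrak D(\gamma)},\qquad X:=\varepsilon^{\alpha-1-\eta}\mathfrak B^{2\nu}t .
\]

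The key step, and the main obstacle, is to show that this tree sum is $\mathcal O(X)$ rather than merely $\mathcal O(1)$. For small $X$, the series $\sum_{\gamma}X^{\ell}/\mathfrak D$ from \cite[Lemma 4.4]{DLX24} is a generating function whose $\gamma=0$ term equals $1$. Its value satisfies a cubic fixed-point type relation, $\Phi(X)=1+\int_0^X\Phi^3$, or is at least dominated by one. I would argue that $\Phi$ is analytic and bounded for $X\le 4/27$. Since every tree with $\ell\ge1$ carries at least one factor $X$, we get $\Phi(X)-1\le X\sup\Phi'\lesssim X$ for $X\le\frac12\cdot\frac4{27}$. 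This bound is uniform in $k$, so it passes to the limit $k\to\infty$, which exists by \autoref{thm:eude}. I expect the care to go into extracting this linear-in-$X$ factor cleanly from the cited combinatorial lemma, possibly by restating its majorant series.

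It remains to substitute the time scale. With $t\sim\varepsilon^{-\alpha+1+\frac32\eta+\frac{\mu_3}2}$ we have $X\sim\varepsilon^{\frac\eta2+\frac{\mu_3}2}$, so
\[
\|u(t)-u_{\mathrm{linear}}(t)\|_{L^\infty}\lesssim\varepsilon^{-\frac12-\frac\eta2}\,\varepsilon^{\frac\eta2+\frac{\mu_3}2}=\varepsilon^{-\frac12+\frac{\mu_3}2}.
\]
The condition $\eta<\eta_3$ is what guarantees that this time scale is long, namely $\beta>0$, and that it remains inside $[0,T_\varepsilon]$.
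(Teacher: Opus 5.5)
Your proof is correct, but it takes a different route from the paper's. The paper does not reopen the tree expansion. It writes the Duhamel formula \eqref{eq:duhamel_c} once for the limit coefficients $c(t,\boldsymbol n)$; summing over $\boldsymbol n$ removes the constraint $\boldsymbol n^1-\boldsymbol n^2+\boldsymbol n^3=\boldsymbol n$, so the error is bounded by $\varepsilon^{\alpha}\int_0^t\prod_{j=1}^3\sum_{\boldsymbol n^j}|c(s,\boldsymbol n^j)|\,\mathrm{d}s$. It then inserts the a priori bound \eqref{eq:decayt} into each of the three factors, giving $\varepsilon^{\alpha-\frac32-\frac32\eta}\,t$ times a convergent lattice sum. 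In that argument the single time integral supplies the gain, and all higher-order trees are already absorbed into the uniform bound on $c$. You instead stay at the level of the iterates, bound $\sum_{\boldsymbol n}|c_k-c_0|$ by $\varepsilon^{-\frac12-\frac\eta2}\sum_{\gamma\neq0}X^{\ell(\gamma)}/\mathfrak D(\gamma)$, and extract the factor $X$ from the generating function. That step is sound. The partial sums $\Phi_k(X)=\sum_{\gamma\in\Gamma^{(k)}}X^{\ell(\gamma)}/\mathfrak D(\gamma)$ satisfy exactly $\Phi_k=1+\int_0^X\Phi_{k-1}^3$ with $\Phi_0=1$. By induction, $\Phi_k(X)\le(1-2X)^{-1/2}$ and $\Phi_k(X)-1\le X(1-2X)^{-3/2}$ for $X<\frac12$, uniformly in $k$. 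Alternatively, $\sum_{\gamma\neq0}X^{\ell}/\mathfrak D\le (X/X_0)\,\Phi_k(X_0)$ for $X\le X_0$ already suffices. Both routes produce the same quantity $\varepsilon^{\alpha-\frac32-\frac32\eta}\,t$, hence the same time scale. The paper's argument is shorter, but it uses the full content of \autoref{thm:eude} for the limit $c$: that $c$ solves \eqref{eq:duhamel_c} and satisfies \eqref{eq:decay2}. Yours needs only $k$-uniform bounds on the iterates, pointwise convergence $c_k\to c$, and Fatou's lemma. It also makes explicit that the error is carried by trees with $\ell\ge1$, which is the natural starting point if one later wants to isolate the first nonlinear tree. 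One minor correction: $t\le T_\varepsilon$ for small $\varepsilon$ follows from $\frac\eta2+\frac{\mu_3}2>0$ alone, and the hypothesis $\eta<\eta_3$ is needed only to guarantee $\beta>0$.
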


\begin{proof}
We begin by writing
\begin{align*}
\| u(t)- u_{\mathrm{linear}}(t)\|_{L_x^\infty(\mathbb R^d)}
&\leq \sum_{\boldsymbol n\in\mathbb Z^\nu} |(c - c_0)(t,\boldsymbol n)|.
\end{align*}
Using the Duhamel expansion for the nonlinear evolution, we obtain
\begin{align*}
\| u(t)- u_{\mathrm{linear}}(t)\|_{L_x^\infty(\mathbb R^d)}
&\leq {\varepsilon^{\alpha}} \int_0^t \sum_{\boldsymbol n\in\mathbb Z^\nu}
\sum_{\substack{\boldsymbol n^j\in\mathbb Z^\nu,\; j=1,2,3\\
\boldsymbol n^1-\boldsymbol n^2+\boldsymbol n^3=\boldsymbol n}}
\prod_{j=1}^3 |c(s,\boldsymbol n^j)|\, \mathrm{d}s.
\end{align*}
We then apply the decay structure \eqref{eq:decayt} of the coefficients and obtain
\begin{align*}
\| u(t)- u_{\mathrm{linear}}(t)\|_{L_x^\infty(\mathbb R^d)}
&\lesssim {\varepsilon^{\alpha-\frac{3}{2}-\frac{3}{2}\eta}}\, \varepsilon^{-\alpha+1+\frac{3}{2}\eta+\frac{\mu_3}{2}}
\prod_{j=1}^3 \sum_{\boldsymbol n^j\in\mathbb Z^\nu}
\tnorm{\boldsymbol{n}^j}_{-\frac{\rho-\kappa}{2}}\\
&\lesssim {\varepsilon^{-\frac{1}{2}+\frac{\mu_3}{2}}}\,
\prod_{j=1}^3 \sum_{\boldsymbol n^j\in\mathbb Z^\nu}
\tnorm{\boldsymbol{n}^j}_{-\frac{\rho-\kappa}{2}}.
\end{align*}
Since the weighted sums are finite under the condition
\[
\rho_{j,j'} - \kappa_{j,j'}>2, \quad j^\prime=1,\cdots,\nu_j, j=1,\cdots,d,
\]
we conclude that
\[
\| u(t)- u_{\mathrm{linear}}(t)\|_{L_x^\infty(\mathbb R^d)}
\lesssim \varepsilon^{-\frac{1}{2}+\frac{\mu_3}{2}}.
\]
\end{proof}
\begin{remark}
By \autoref{eq:duhamel}, there exists a positive constant $C$ such that
\[
\| u(t)- u_{\mathrm{linear}}(t)\|_{L_x^\infty(\mathbb R^d)}
\le C\varepsilon^{-\frac{1}{2}+\frac{\mu_3}{2}}. 
\]
Set
\[
\widehat{\Omega}=\left\{\| u(t)- u_{\mathrm{linear}}(t)\|_{L_x^\infty(\mathbb R^d)}
\leq C\varepsilon^{-\frac{1}{2}+\frac{\mu_3}{2}}\right\}.
\]
Clearly, 
\begin{align}\label{eq:hatomega}
\Omega_\delta^c\subset\widehat{\Omega}.
\end{align}
\end{remark}

\subsection{Lower bound}
Set
\begin{align*}
\mathscr A_\varepsilon^- &= \left\{ \| u_{\text{linear}} \|_{L^\infty_x(\mathbb{R}^d)} - \| u - u_{\text{linear}} \|_{L^\infty_x(\mathbb{R}^d)} > z_0 \, \varepsilon^{-\frac{1}{2}} \right\},\\
\mathscr B_\varepsilon^- &= \left\{ \| u_{\text{linear}} \|_{L^\infty_x(\mathbb{R}^d)} > \left( z_0 + C \, \varepsilon^{\frac{\mu_3}{2}} \right) \varepsilon^{-\frac{1}{2}} \right\}.
\end{align*}
Clearly, we have the following inclusion relation: 
\[
\mathscr B_\varepsilon^-\cap\widehat{\Omega} 
\subset \mathscr A_\varepsilon^-
\subset \mathscr A_\varepsilon^\infty.
\]

By the law of total probability and \eqref{eq:hatomega}, we have
\begin{align*}
\mathbb P(\mathscr A_\varepsilon^\infty)
&\ge \mathbb P(\mathscr B_\varepsilon^-\cap\widehat{\Omega})\\
&= \mathbb P(\mathscr B_\varepsilon^-)
   -\mathbb P(\mathscr B_\varepsilon^-\cap\widehat{\Omega}^{\,c})\\
&\ge \mathbb P(\mathscr B_\varepsilon^-)-\mathbb P(\widehat{\Omega}^{\,c})\\
&\ge \mathbb P(\mathscr B_\varepsilon^-)-\mathbb P(\Omega_\delta)\\
&=\mathbb P(\mathscr B_\varepsilon^-)\left(1-\frac{\mathbb P(\Omega_\delta)}{\mathbb P(\mathscr B_\varepsilon^-)}\right).
\end{align*}
Taking logarithms and multiplying by \(\varepsilon\), we have
\begin{align}\label{w2}
\varepsilon\log\mathbb P(\mathscr A_\varepsilon^\infty)\geq
\varepsilon\log\mathbb P(\mathscr B_\varepsilon^-)
+\varepsilon\log\left(1-\frac{\mathbb P(\Omega_\delta)}{\mathbb P(\mathscr B_\varepsilon^-)}\right).
\end{align}

Applying \autoref{thm:linearldp}, we deduce that
\begin{align*}
\lim_{\varepsilon \to 0^+} \varepsilon \log\mathbb P \left(\mathscr B_\varepsilon^- \right) &= -\frac{z_0^2}{\sum_{\boldsymbol{n}\in\mathbb Z^\nu}|  c(\boldsymbol n)|^2}. 
\end{align*}

Similar to the analysis in \autoref{sec:all}, 
\[\lim_{\varepsilon\to0}\varepsilon\log\left(1-\frac{\mathbb P(\Omega_\delta)}{\mathbb P(\mathscr B_\varepsilon^-)}\right)=0.\]

Therefore
\begin{align}\label{eqlower}
\liminf_{\varepsilon\to0^+}\varepsilon\log\mathbb P(\mathscr A_\varepsilon^\infty)
\ge-\frac{z_0^2}{\sum_{\boldsymbol{n}\in\mathbb Z^\nu}|  c(\boldsymbol n)|^2}.
\end{align}
\subsection{Upper bound}
Set
\begin{align*}
\mathscr A_\varepsilon^+ &= \left\{ \| u_{\text{linear}} \|_{L^\infty_x(\mathbb{R}^d)} + \| u - u_{\text{linear}} \|_{L^\infty_x(\mathbb{R}^d)} > z_0 \, \varepsilon^{-\frac{1}{2}} \right\},\\
\mathscr B_\varepsilon^+ &= \left\{ \| u_{\text{linear}} \|_{L^\infty_x(\mathbb{R}^d)} > \left( z_0 - C \, \varepsilon^{\frac{\mu_3}{2}} \right) \varepsilon^{-\frac{1}{2}} \right\}.
\end{align*}
Clearly, we have the following inclusion relation: 
\[
\mathscr A_\varepsilon^\infty \subset \mathscr A_\varepsilon^+,\quad \mathscr A_\varepsilon^+\cap\widehat{\Omega} \subset \mathscr B_\varepsilon^+.
\]

It follows from the law of total probability and \eqref{eq:hatomega} that
\begin{align*}
\mathbb P(\mathscr A_\varepsilon^\infty)
&\le \mathbb P(\mathscr A_\varepsilon^+)\\
&=\mathbb P(\mathscr A_\varepsilon^+\cap\widehat{\Omega})
+\mathbb P(\mathscr A_\varepsilon^+\cap\widehat{\Omega}^c)\\
&\le \mathbb P(\mathscr B_\varepsilon^+)+\mathbb P(\widehat{\Omega}^c)\\
&\le \mathbb P(\mathscr B_\varepsilon^+)+\mathbb P(\Omega_\delta)\\
&=\mathbb P(\mathscr B_\varepsilon^+)\left(1+\frac{\mathbb P(\Omega_\delta)}{\mathbb P(\mathscr B_\varepsilon^+)}\right).
\end{align*}
Taking logarithms and multiplying by \(\varepsilon\), we have
\begin{align}\label{w3}
\varepsilon\log\mathbb P(\mathscr A_\varepsilon^\infty)\le
\varepsilon\log\mathbb P(\mathscr B_\varepsilon^+)
+\varepsilon\log\left(1+\frac{\mathbb P(\Omega_\delta)}{\mathbb P(\mathscr B_\varepsilon^+)}\right).
\end{align}

Applying \autoref{thm:linearldp}, we deduce that
\begin{align*}
\lim_{\varepsilon \to 0^+} \varepsilon \log \mathbb{P}\left(\mathscr B_\varepsilon^+ \right) &= -\frac{z_0^2}{\sum_{\boldsymbol{n}\in\mathbb Z^\nu}|  c(\boldsymbol n)|^2}. 
\end{align*}

Analogously, 
\[\lim_{\varepsilon\to0}\varepsilon\log\left(1+\frac{\mathbb P(\Omega_\delta)}{\mathbb P(\mathscr B_\varepsilon^+)}\right)=0.\]

Hence we obtain
\begin{align}\label{equpper}
\limsup_{\varepsilon\to0^+}\varepsilon\log\mathbb P(\mathscr A_\varepsilon^\infty)
\le-\frac{z_0^2}{\sum_{\boldsymbol{n}\in\mathbb Z^\nu}|  c(\boldsymbol n)|^2}.
\end{align}

Combining \eqref{eqlower} and \eqref{equpper} yields that
\begin{align*}
\lim_{\varepsilon\to0^+}\varepsilon\log\mathbb P\Big( \|u(t)\|_{L_{\boldsymbol x}^\infty(\mathbb R^d)} > z_0 \varepsilon^{-\frac{1}{2}} \Big)
=-\frac{z_0^2}{\sum_{\boldsymbol n\in\mathbb Z^\nu}|c(\boldsymbol n)|^2},\quad t\sim\varepsilon^{-\alpha+1+\frac{3}{2}\eta+\frac{\mu_3}{2}}.
\end{align*}
This completes the proof of \autoref{thm:nonlinearldp}. 

\appendix

\section{Probability and Statistics}\label{appendixps}
In this section, we review some fundamental notation and concepts from probability and statistics.

\subsection{Real-Valued Random Variables}
Let the triple $(\Omega, \mathcal{F}, \mathbb{P})$ be a \emph{probability space}. 
The set $\Omega$ denotes the \emph{sample space}, comprising all possible outcomes, 
and $\omega \in \Omega$ represents an individual outcome (\emph{sample point}). 
The collection $\mathcal{F}$ is a $\sigma$-algebra on $\Omega$, referred to as the \emph{event space}, 
whose elements are subsets of $\Omega$; these events are typically denoted by uppercase letters such as $A$, $B$, $C$. 
The function $\mathbb{P} : \mathcal{F} \to [0,1]$ is a \emph{probability measure}, 
assigning to each event $A \in \mathcal{F}$ its probability $\mathbb{P}(A)$.

Let $\mathbb{K} \in \{\mathbb{R}, \mathbb{C}\}$ be a field. 
A {\em $\mathbb{K}$-valued random variable} $X$ is a measurable function 
from the sample space to $\mathbb{K}$, that is, 
$X : \Omega \to \mathbb{K}$, $\omega \mapsto X(\omega)$.

We now focus on real-valued random variables, i.e., the case $\mathbb{K} = \mathbb{R}$. 
Events of the form $\{\omega \in \Omega : X(\omega) \in D \subset \mathbb{R}\} := A$ 
are commonly abbreviated as $\{X \in D\}$. 
Furthermore, $\mathbb{P}(A) = \mathbb{P}(\{X \in D\}) \triangleq \mathbb{P}(X \in D)$.

The {\em cumulative distribution function} (CDF) of a random variable $X$, 
denoted by $F_X$, is defined as $F_X(x) = \mathbb{P}(X \leq x)$ for all $x \in \mathbb{R}$.

Under standard regularity conditions, such as the differentiability of $F_X$ 
with respect to $x$, the {\em probability density function} (PDF) of $X$, 
denoted $f_X$, is defined as the derivative
$$
f_X(x) = \frac{{\rm d}}{{\rm d}x} F_X(x).
$$

The {\em expectation}, or {\em mean}, of a random variable $X$, denoted $\mathbb{E}[X]$, 
is the weighted average of $X(\omega)$ (or $X$) using the probability measure 
${\rm d}\mathbb{P}(\omega)$ (or PDF $f_X$) as weights, that is, 
\begin{align*}
\mathbb{E}[X] &= \int_\Omega X(\omega)\,{\rm d}\mathbb{P}(\omega) \quad (\text{using the probability measure in abstract space}), \\
&= \int_{\mathbb{R}} x\,{\rm d}F_X(x) \quad (\text{using the CDF via Stieltjes integral}), \\
&= \int_{\mathbb{R}} x f_X(x)\,{\rm d}x \quad (\text{using the PDF in the absolutely continuous case}).
\end{align*}
We also denote this by $\mu_X \triangleq \mu$.

The expectation satisfies linearity, that is, 
$$
\mathbb{E}[a_1X_1 + a_2X_2] = a_1\mathbb{E}[X_1] + a_2\mathbb{E}[X_2], \quad \forall a_1,a_2 \in \mathbb{R}.
$$

The {\em variance} of a random variable $X$, denoted $\mathbb{V}(X)$, is defined as 
the expected value of $(X-\mu_X)^2$, that is,
$$
\mathbb{V}(X) = \mathbb{E}[(X-\mu_X)^2].
$$
We also denote this by $\sigma_X^2 \triangleq \sigma^2$. By linearity of expectation,
$$
\mathbb{V}(X) = \mathbb{E}[X^2] - \mu_X^2.
$$
Furthermore,
$$
\mathbb{V}(aX + b) = a^2 \mathbb{V}(X), \quad \forall a \in \mathbb{R}.
$$

The {\em characteristic function} (CF) of $X$, denoted $\phi_X$, is defined by
$$
\phi_X(t) = \mathbb{E}[e^{{\rm i} t X}], \quad t \in \mathbb{R}.
$$
It should be noted that characteristic functions uniquely determine the distribution: 
$\phi_X = \phi_Y$ if and only if $X$ and $Y$ have the same distribution. 
This follows from the L{\'e}vy continuity theorem; see \cite{Tao12}.

\subsection{Several Important Distributions}
In this subsection, we briefly introduce several important classical distributions 
relevant to studying rogue wave mechanisms from a probabilistic perspective. 
These distributions are repeatedly applied in the calculation of LDP for rogue waves.

\subsubsection{Gaussian Distribution}
We say that a random variable $X$ follows the {\em Gaussian/normal distribution} 
with mean $\mu$ and variance $\sigma^2$ if its PDF is
$$
f_X(x;\mu,\sigma) = \frac{1}{\sqrt{2\pi}\sigma} e^{-\frac{(x-\mu)^2}{2\sigma^2}}, \quad x \in \mathbb{R}.
$$
The corresponding mean, variance and characteristic function are, respectively, 
$$
\mathbb{E}[X] = \mu, \quad \mathbb{V}(X) = \sigma^2, \quad \phi_X(t) = e^{{\rm i}\mu t - \frac{1}{2}\sigma^2 t^2}.
$$

Here and throughout, we write $X \sim \mathscr{N}_\mathbb{R}(\mu,\sigma^2)$ to denote 
a real-valued random variable following this distribution.

If $X \sim \mathscr{N}_\mathbb{R}(\mu,\sigma^2)$, then the normalization 
$Y = \frac{X-\mu}{\sigma} \sim \mathscr{N}_\mathbb{R}(0,1)$. 
In this case, we say that $Y$ follows the {\em standard} Gaussian/normal distribution, 
with PDF $f_Y(y) = \frac{1}{\sqrt{2\pi}}e^{-\frac{1}{2}y^2}$ and CF 
$\phi_Y(t) = e^{-\frac{1}{2}t^2}$.

\subsubsection{Exponential Distribution}\label{sec:exponentialdistribution}

We say that a random variable $X$ follows the {\em exponential distribution} 
with {\em rate parameter} $\lambda > 0$ if its PDF is
$$
f_X(x;\lambda) = \lambda e^{-\lambda x} \boldsymbol{1}_{[0,\infty)}(x).
$$

Throughout, we write $X \sim \mathscr{E}(\lambda)$ to denote that $X$ follows 
this distribution.

If $X \sim \mathscr{E}(\lambda)$, then
$$
\mathbb{E}[X] = \frac{1}{\lambda}, \quad \mathbb{V}(X) = \frac{1}{\lambda^2},
$$
and $aX \sim \mathscr{E}\bigl(\frac{\lambda}{a}\bigr)$ for $a > 0$.

\subsubsection{Chi-squared Distribution}

We say that a random variable $X$ follows the {\em chi-squared distribution} 
with $n$ degrees of freedom if its PDF is
$$
f_X(x;n) = \frac{1}{\Gamma\left(\frac{n}{2}\right)2^{n/2}} x^{\frac{n}{2}-1} e^{-x/2} \boldsymbol{1}_{[0,\infty)}(x).
$$

Throughout, we write $X \sim \chi_n^2$ to denote this distribution.

If $X \sim \chi_n^2$, then
$$
\mathbb{E}[X] = n, \quad \mathbb{V}(X) = 2n.
$$

In addition, we have
\begin{itemize}
\item $\chi_2^2 = \mathscr{E}\bigl(\frac{1}{2}\bigr)$.
\item If $X_1 \sim \chi_{n_1}^2$ and $X_2 \sim \chi_{n_2}^2$ are independent, then $X_1 + X_2 \sim \chi_{n_1+n_2}^2$.
\item If $\{X_j\}_{1\leq j\leq n}$ are i.i.d. $\mathscr{E}(\lambda)$, then $2\lambda\sum_{j=1}^n X_j \sim \chi_{2n}^2$.
\item If $\{X_j\}_{1\leq j\leq n}$ are i.i.d. $\mathscr{N}_\mathbb{R}(0,1)$, then $\sum_{j=1}^n X_j^2 \sim \chi_n^2$.
\end{itemize}

To introduce complex-valued random variables, we first review the concepts of 
joint distributions (see \autoref{sec:jointdistribution}) and random independence 
(see \autoref{sec:randomindependence}), initially for two random variables. 
These concepts naturally generalize to any finite collection of random variables.

\subsection{Joint Distribution}\label{sec:jointdistribution}

Let $X$ and $Y$ be random variables defined on a common probability space 
$(\Omega, \mathcal{F}, \mathbb{P})$. The {\em joint CDF} of $(X,Y)$, denoted $F_{X,Y}$, 
is defined by
$$
F_{X,Y}(x,y) = \mathbb{P}(X \leq x, Y \leq y), \quad (x,y) \in \mathbb{R}^2.
$$

Under standard regularity conditions (such as the existence of mixed partial derivatives), 
the {\em joint PDF}, denoted $f_{X,Y}$, is given by
$$
f_{X,Y}(x,y) = \frac{\partial^2 F_{X,Y}(x,y)}{\partial x \partial y}.
$$

Let $f_X$ and $f_Y$ denote the marginal PDFs of $X$ and $Y$. Then
\begin{align*}
f_X(x) &= \int_{\mathbb{R}} f_{X,Y}(x,y)\,{\rm d}y = \frac{\partial F_{X,Y}(x,y)}{\partial x}\Big|_{y\to +\infty}, \\
f_Y(y) &= \int_{\mathbb{R}} f_{X,Y}(x,y)\,{\rm d}x = \frac{\partial F_{X,Y}(x,y)}{\partial y}\Big|_{x\to +\infty}.
\end{align*}
Here, $f_X$ and $f_Y$ are called the {\em marginal distributions}.

The {\em joint CF} for the joint distribution of $(X,Y)$, 
denoted by $\phi_{X,Y}$, is defined by
$$
\phi_{X,Y}(t_1,t_2) = \mathbb{E}[e^{{\rm i}(t_1X + t_2Y)}], \quad (t_1,t_2) \in \mathbb{R}^2.
$$

\subsection{Conditional Probability and Independence}\label{sec:randomindependence}

Let $A$ and $B$ be events in the probability space $(\Omega, \mathcal{F}, \mathbb{P})$ 
with $\mathbb{P}(B) > 0$. The {\em conditional probability} of $A$ given $B$, denoted 
$\mathbb{P}(A|B)$, is defined by
$$
\mathbb{P}(A|B) = \frac{\mathbb{P}(A \cap B)}{\mathbb{P}(B)}.
$$
This yields the {\em multiplication rule}:
$$
\mathbb{P}(A \cap B) = \mathbb{P}(A|B) \mathbb{P}(B).
$$

Events $A$ and $B$ are {\em independent} if
$$
\mathbb{P}(A|B) = \mathbb{P}(A),
$$
or equivalently,
$$
\mathbb{P}(A \cap B) = \mathbb{P}(A) \mathbb{P}(B).
$$
In this case, $\mathbb{P}(B|A) = \mathbb{P}(B)$ also holds.

Let $X$ and $Y$ be random variables defined on a common probability space 
$(\Omega, \mathcal{F}, \mathbb{P})$. We say $X$ and $Y$ are {\em independent} 
if the events $\{X \leq x\}$ and $\{Y \leq y\}$ are independent for all 
$x,y \in \mathbb{R}$. Specifically,
$$
\mathbb{P}(X \leq x, Y \leq y) = \mathbb{P}(X \leq x) \mathbb{P}(Y \leq y).
$$
That is,
\begin{align*}
F_{X,Y}(x,y) &= F_X(x) F_Y(y) \quad (\text{using the CDF}), \\
f_{X,Y}(x,y) &= f_X(x) f_Y(y) \quad (\text{using the PDF}), \\
\phi_{X,Y}(t_1,t_2) &= \phi_X(t_1) \phi_Y(t_2) \quad (\text{using the CF}).
\end{align*}

\subsection{Complex-Valued Random Variables}

From now on, we focus on {\em complex-valued random variables}, i.e., $\mathbb{K} = \mathbb{C}$, 
particularly those following {\em complex Gaussian distributions}.

Let $Z$ be a complex random variable on the probability space $(\Omega, \mathcal{F}, \mathbb{P})$, 
that is, $Z : \Omega \to \mathbb{C}$, $\omega \mapsto Z(\omega)$.

The distribution of $Z$ admits two natural representations. 
The first, based on the {\em Cartesian coordinate system}, identifies it with the 
joint distribution of the real and imaginary parts $\operatorname{Re}Z$ and $\operatorname{Im}Z$. 
The second, based on the {\em polar coordinate system}, identifies it with the 
joint distribution of the modulus $|Z|$ and argument $\arg Z$.

In the {\em Cartesian approach}, write $Z(\omega) = X(\omega) + {\rm i} Y(\omega)$ 
for $\omega \in \Omega$, where $X,Y$ are real-valued random variables on 
$(\Omega, \mathcal{F}, \mathbb{P})$. The distribution of $Z$ is the joint distribution 
of $(X,Y)$ under the {\em partial order} $\prec$ defined by
$$
z_1 \prec z_2 \Longleftrightarrow \operatorname{Re} z_1 \leq \operatorname{Re} z_2 
\;\text{and}\; \operatorname{Im} z_1 \leq \operatorname{Im} z_2.
$$
For $z = x + {\rm i} y \in \mathbb{C}$, the {\em CDF} is
$$
F_Z(z) = \mathbb{P}(Z \prec z) = \mathbb{P}(X \leq x, Y \leq y) = F_{X,Y}(x,y).
$$
Hence the {\em PDF} is given by 
\[f_Z(z)=f_{X,Y}(x, y)=\frac{\partial^2F_{X,Y}(x,y)}{\partial x\partial y};\]
and for $t=t_1+{\rm i}t_2$, where $t_1,t_2\in\mathbb R$, the {\em CF} is given by\footnote{In studies of complex random variables, the CF is often defined as 
$\phi_Z(t) = \mathbb{E}\bigl[e^{{\rm i} \operatorname{Re}(\overline{t} Z)}\bigr]$ 
for $t \in \mathbb{C}$. For $t = t_1 + {\rm i} t_2$ and $Z = X + {\rm i} Y$, 
this equals $\mathbb{E}\bigl[e^{{\rm i}(t_1X + t_2Y)}\bigr]$, 
the joint CF of the real and imaginary parts.}
\begin{equation}
    \phi_Z(t) = \phi_{X,Y}(t_1, t_2) = \mathbb{E} \left[ e^{\mathrm{i}(t_1 X + t_2 Y)} \right], \quad (t_1, t_2) \in \mathbb{R}^2.
\end{equation}

For the {\em expectation} of $Z = X + {\rm i} Y$, we have
$$
\mathbb{E}[Z] = \mathbb{E}[X] + {\rm i} \mathbb{E}[Y].
$$
Whenever $\mathbb{E}[Z]$ exists, expectation and complex conjugation commute, that is, 
$$
\mathbb{E}[\overline{Z}] = \overline{\mathbb{E}[Z]}.
$$

For the {\em variance} of $Z = X + {\rm i} Y$, it is defined by
$$
\mathbb{V}(Z) = \mathbb{E}[|Z - \mu_Z|^2].
$$
Using $|z|^2 = z\overline{z}$, we have
\begin{align*}
\mathbb{V}(Z) &= \mathbb{E}\bigl[(Z-\mu_Z)(\overline{Z}-\overline{\mu_Z})\bigr] \\
&= \mathbb{E}[|Z|^2] - |\mu_Z|^2.
\end{align*}
Using $|z|^2 = x^2 + y^2$ with $Z-\mu_Z = (X-\mu_X) + {\rm i}(Y-\mu_Y)$, we obtain
\begin{align*}
\mathbb{V}(Z) &= \mathbb{E}[|X-\mu_X|^2 + |Y-\mu_Y|^2] \\
&= \mathbb{V}(X) + \mathbb{V}(Y).
\end{align*}

In the {\em polar approach}, write $Z(\omega) = R(\omega) e^{{\rm i} \Theta(\omega)}$ 
for $\omega \in \Omega$, where $R(\omega) > 0$, $0 \leq \Theta(\omega) < 2\pi$, 
and $R, \Theta$ are real-valued random variables on $(\Omega, \mathcal{F}, \mathbb{P})$. 
The distribution of $Z$ is then identified with the joint distribution of $(R, \Theta)$.

In fact, these two approaches are equivalent via the transformation
$$
X = R \cos \Theta, \quad Y = R \sin \Theta.
$$

\subsubsection{Complex Gaussian}\label{sec:complexgaussian}
Here we focus on {\em centered} complex-valued Gaussian random variables, 
i.e., those with zero mean. Write 
$
Z = X + {\rm i}Y,
$
where $X \sim \mathscr{N}_\mathbb{R}(0,\sigma_1^2)$ and $Y \sim \mathscr{N}_\mathbb{R}(0,\sigma_2^2)$ 
are independent. Then 
$
\mathbb{V}(Z) = \sigma_1^2 + \sigma_2^2,
$
so $Z \sim \mathscr{N}_\mathbb{C}(0,\sigma_1^2 + \sigma_2^2)$.

We pay special attention to the {\em equi-variance case} $\sigma_1 = \sigma_2 = \sigma$, 
where $Z \sim \mathscr{N}_\mathbb{C}(0,2\sigma^2)$. In particular, when $2\sigma^2 = 1$, 
$Z$ is called a {\em standard} complex Gaussian random variable; see \cite{GOO63}.

Let $Z = X + {\rm i}Y$, where $X$ and $Y$ are independent centered Gaussians 
with variance $\sigma^2$, i.e., $X,Y \sim \mathscr{N}_\mathbb{R}(0,\sigma^2)$. 
After normalization, $X/\sigma, Y/\sigma \sim \mathscr{N}_\mathbb{R}(0,1)$, so
$$
\sigma^{-2}|Z|^2 = \sigma^{-2}(X^2 + Y^2) = \left(\frac{X}{\sigma}\right)^2 + \left(\frac{Y}{\sigma}\right)^2 \sim \chi_2^2 = \mathscr{E}\left(\frac{1}{2}\right),
$$
both having PDF
$$
f_{\sigma^{-2}|Z|^2}(x) = \frac{1}{2} e^{-x/2} \boldsymbol{1}_{[0,\infty)}(x).
$$

Furthermore (see \autoref{sec:exponentialdistribution}), $|Z|^2$ follows the 
exponential distribution with rate parameter $\frac{1}{2\sigma^2}$, that is,
\begin{equation}\label{square}
|Z|^2 \sim \mathscr{E}\left(\frac{1}{2\sigma^2}\right).
\end{equation}

Similarly, for the distribution of $|Z|$, we have
\begin{equation*}
f_{|Z|}(x) = \frac{x}{\sigma^2} e^{-x^2/(2\sigma^2)} \boldsymbol{1}_{[0,\infty)}(x).
\end{equation*}
This is the PDF of the so-called {\em Rayleigh distribution} with parameter $\sigma$, 
denoted $|Z| \sim \mathscr{R}(\sigma)$; see \cite{Sid62}.

Let $Z = R e^{{\rm i} \Theta}$. From the preceding discussion, the radius $R$ follows 
the {Rayleigh distribution} with parameter $\sigma$, i.e., $R \sim \mathscr{R}(\sigma)$, 
and $R^2$ follows the {exponential distribution} with rate parameter $\frac{1}{2\sigma^2}$, 
i.e.,
\begin{equation*}
R^2 \sim \mathscr{E}\left(\frac{1}{2\sigma^2}\right).
\end{equation*}
Furthermore, the phase $\Theta$ is {\em uniformly distributed} on $[0,2\pi)$, 
i.e., $\Theta \sim \mathscr{U}[0,2\pi)$; see \cite{GGKS23}.

\subsection{Chernoff bound}
\begin{lemma}[Chernoff bound]\label{lemma:ccbound}
Let \(X\) be a nonnegative random variable. Then, for any \(a \ge 0\) and \(\lambda > 0\),
\[
\mathbb{P}(X \ge a)
\le e^{-\lambda a}\,\mathbb{E}\bigl[e^{\lambda X}\bigr].
\]
\end{lemma}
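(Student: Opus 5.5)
The statement to prove is the Chernoff bound (Lemma~\ref{lemma:ccbound}). It is an immediate consequence of Markov's inequality applied to the exponential moment.

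First, fix $\lambda>0$ and $a\ge 0$. The map $x\mapsto e^{\lambda x}$ is strictly increasing on $\mathbb R$. Hence the events $\{X\ge a\}$ and $\{e^{\lambda X}\ge e^{\lambda a}\}$ coincide. The random variable $Y:=e^{\lambda X}$ is nonnegative, and $e^{\lambda a}>0$.

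Next, apply Markov's inequality to $Y$ at level $e^{\lambda a}$. To keep the argument self-contained, I would justify Markov directly through the pointwise inequality
\[
e^{\lambda a}\,\boldsymbol 1_{\{Y\ge e^{\lambda a}\}}\le Y .
\]
This holds on $\{Y\ge e^{\lambda a}\}$ by definition, and off that set because $Y\ge 0$. Taking expectations, using monotonicity and linearity, gives
\[
e^{\lambda a}\,\mathbb P(X\ge a)\le \mathbb E\bigl[e^{\lambda X}\bigr].
\]
Dividing by $e^{\lambda a}$ yields the claim.

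There is no real obstacle. The only point to note is the case $\mathbb E[e^{\lambda X}]=+\infty$, where the bound holds trivially. In the application in Lemma~\ref{ldp3}, the expectation is finite for $\lambda\in(0,\tfrac12)$. Nonnegativity of $X$ is not actually needed, since $e^{\lambda X}>0$ always; it is harmless as a hypothesis.
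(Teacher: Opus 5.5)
Your proof is correct and follows the paper's argument: identify $\{X\ge a\}$ with $\{e^{\lambda X}\ge e^{\lambda a}\}$ by monotonicity, then apply Markov's inequality to $e^{\lambda X}$. Your self-contained derivation of Markov from the pointwise bound is a harmless addition. Your remarks are also accurate: nonnegativity of $X$ is superfluous, and the case $\mathbb{E}[e^{\lambda X}]=+\infty$ is trivial.
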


\begin{proof}
Fix any \(\lambda > 0\). Since \(x \mapsto e^{\lambda x}\) is strictly increasing, we have
\[
\{X \ge a\} = \{e^{\lambda X} \ge e^{\lambda a}\}.
\]
Applying Markov's inequality to the nonnegative random variable \(e^{\lambda X}\), we obtain
\begin{align*}
\mathbb{P}(X \ge a)
&= \mathbb{P}(e^{\lambda X} \ge e^{\lambda a}) \\
&\le \frac{\mathbb{E}[e^{\lambda X}]}{e^{\lambda a}} \\
&= e^{-\lambda a}\,\mathbb{E}[e^{\lambda X}].
\end{align*}
This completes the proof of \autoref{lemma:ccbound}.
\end{proof}

\newcommand{\etalchar}[1]{$^{#1}$}
\providecommand{\bysame}{\leavevmode\hbox to3em{\hrulefill}\thinspace}
\providecommand{\MR}{\relax\ifhmode\unskip\space\fi MR }
\providecommand{\MRhref}[2]{%
  \href{http://www.ams.org/mathscinet-getitem?mr=#1}{#2}
}
\providecommand{\href}[2]{#2}

\end{document}